\renewcommand{\d}{\mathrm{d}}
\newcommand{\D}{\mathrm{D}}
\newcommand{\e}{\mathrm{e}}
\newtheorem{Thm}{Theorem}[section]
\newtheorem{Lem}[Thm]{Lemma}
\newtheorem{Prop}[Thm]{Proposition}
\newtheorem{Cor}[Thm]{Corollary}
\newtheorem{Rem}[Thm]{Remark}
\newtheorem{Def}[Thm]{Definition}
\newtheorem{Con}[Thm]{Conjecture}
\newtheorem{Ex}[Thm]{Example}
\newtheorem{Nota}[Thm]{Notation}
\newtheorem*{MainThm}{Main Theorem}
\newtheoremstyle{named}{}{}{\itshape}{}{\bfseries}{.}{.5em}{#1 #3}
\theoremstyle{named}
\def\R{\mathbb{R}}
\def\Q{\mathbb{Q}}
\def\C{\mathbb{C}}
\def\Z{\mathbb{Z}}
\def\fb{\mathfrak{b}}
\def\g{\mathfrak{g}}
\def\sl{\mathfrak{sl}}
\def\cH{\mathcal{H}}
\def\cK{\mathcal{K}}
\def\cN{\mathcal{N}}
\def\cP{\mathcal{P}}
\def\cR{\mathcal{R}}
\def\cS{\mathcal{S}}
\def\cT{\mathcal{T}}
\def\cU{\mathcal{U}}
\def\cV{\mathcal{V}}
\def\cW{\mathcal{W}}
\def\a{\alpha}
\def\b{\beta}
\def\c{\gamma}
\def\G{\Gamma}
\def\D{\Delta}
\def\d{\delta}
\def\e{\epsilon}
\def\ze{\zeta}
\def\k{\kappa}
\def\l{\lambda}
\def\S{\Sigma}
\def\t{\tau}
\def\W{\Omega}
\def\w{\omega}
\def\be{\textbf{e}}
\def\bf{\textbf{f}}
\def\bi{\textbf{i}}
\def\bo{\textbf{o}}
\def\bQ{\textbf{Q}}
\def\=>{\Longrightarrow}
\def\to{\longrightarrow}
\def\ox{\otimes}
\def\o+{\oplus}
\def\bo+{\bigoplus}
\def\x{\times}
\def\<{\langle}
\def\>{\rangle}
\def\({\left(}
\def\){\right)}
\def\oo{\infty}
\def\cong{\equiv}
\def\^{\wedge}
\def\+{\dagger}
\def\inv{^{-1}}
\def\half{\frac{1}{2}}
\def\dd[#1,#2]{\frac{d#1}{d#2}}
\def\del[#1,#2]{\frac{\partial #1}{\partial #2}}
\def\over[#1]{\overline{#1}}
\def\vec[#1]{\overrightarrow{#1}}
\def\tab{\;\;\;\;\;\;}
\newcommand{\til}[1]{\widetilde{#1}}
\newcommand{\binq}[2][cccccccccccccccccccccccccccccccccccccccccc]{\left[\begin{array}{#1}#2 \\ \end{array} \right]_q}
\newcommand{\binb}[1]{\begin{pmatrix}#1 \\ \end{pmatrix}_b}
\newcommand{\case}[2][cccccccccccccccccccccccccccccccccccccccccc]{\left\{\begin{array}{#1}#2 \\ \end{array}\right.}
\newcommand{\Eq}[1]{\begin{align}#1\end{align}}
\newcommand{\Eqn}[1]{\begin{align*}#1\end{align*}}
\begin{document}
\title{On tensor product decomposition of positive representations of $\cU_{q\til{q}}(\sl(2,\R))$}

\author{  Ivan C.H. Ip\footnote{
         	   Center for the Promotion of Interdisciplinary Education and Research/\newline
     	  Department of Mathematics, Graduate School of Science, Kyoto University, Japan
		\newline
		Email: ivan.ip@math.kyoto-u.ac.jp
          }
}

\date{\today}

\numberwithin{equation}{section}

\maketitle

\begin{abstract}
We study the tensor product decomposition of the split real quantum group $\cU_{q\til{q}}(\sl(2,\R))$ from the perspective of finite dimensional representation theory of compact quantum groups. It is known that the class of positive representations of $\cU_{q\til{q}}(\sl(2,\R))$ is closed under taking tensor product. In this paper, we show that one can derive the corresponding Hilbert space decomposition, given explicitly by quantum dilogarithm transformations, from the Clebsch-Gordan coefficients of the tensor product decomposition of finite dimensional representations of the compact quantum group $\cU_q(\sl_2)$ by solving certain functional equations and using normalization arising from tensor products of canonical basis. We propose a general strategy to deal with the tensor product decomposition for the higher rank split real quantum group $\cU_{q\til{q}}(\g_\R)$.
\end{abstract}

{\small  {\textbf{Keywords.} Modular double, quantum groups, tensor product, quantum dilogarithm}

{\small  {\textbf {2010 Mathematics Subject Classification.} 17B37, 81R50}}

\newpage
\tableofcontents
\section{Introduction}\label{sec:intro}
The notion of the \emph{positive principal series representations}, or simply \emph{positive representations}, was introduced in \cite{FI} as a new research program devoted to the representation theory of split real quantum groups $\cU_{q\til{q}}(\g_\R)$. It uses the concept of modular double for quantum groups \cite{Fa1, Fa2}, and has been studied for $\cU_{q\til{q}}(\sl(2,\R))$ by Teschner \textit{et al.} \cite{BT, PT1, PT2}. Explicit construction of the positive representations $\cP_\l$ of  $\cU_{q\til{q}}(\g_\R)$ associated to a simple Lie algebra $\g$ has been obtained for the simply-laced case in \cite{Ip2} and non-simply-laced case in \cite{Ip3}, where the generators of the quantum groups are realized by unbounded positive essentially self-adjoint operators acting on certain Hilbert spaces $L^2(\R^N)$. 

One important open problem is the study of the tensor product decomposition of the positive representations $\cP_{\l_1}\ox \cP_{\l_2}$. It is believed that the positive representations are closed under taking tensor product, which, together with the existence of the universal $R$ operator \cite{Ip7}, lead to the construction of new classes of \emph{braided tensor category} and hence to further applications parallel to those from the representation theory of compact quantum groups, including topological quantum field theory in the sense of Reshetikhin-Turaev \cite{RT1,RT2}, quantum higher Teichm\"{u}ller theory \cite{FG1,FG2}, and Toda conformal theory \cite{FL, Wy} which generalizes the quantum Liouville theory corresponding to the case of $\cU_{q\til{q}}(\sl(2,\R))$ \cite{PT1}.

Recently in \cite{Ip4}, using the theory of multiplier Hopf *-algebra, we showed that by considering the positive representations of $\cU_{q\til{q}}(\g_\R)$ restricted to its Borel part $\cU_{q\til{q}}(\fb_\R)$, the positive representations are actually closed under taking tensor product. This provides evidences that in general it may also be closed as well for the case of the full quantum group. The resulting intertwiner $\cT$ is called the quantum mutation operator in the theory of quantum higher Teichm\"{u}ller theory,  and it is a generalization of the earlier work by Frenkel-Kim \cite{FK} in the case of the modular double of the quantum plane, which is used to construct the quantum Teichm\"{u}ller space from the perspective of representation theory.

In the simplest case of $\cU_{q\til{q}}(\sl(2,\R))$, it is proved in \cite{PT2} that the positive representations are closed under taking tensor product in the sense of a continuous direct integral:
\Eq{
\cP_{\l_1}\ox \cP_{\l_2}\simeq \int_{\R_+}^{\o+}\cP_\a d\mu(\a)
}
for some Plancherel measure $d\mu(\a)$ (Theorem \ref{sl2trans}). The theorem is originally proved by considering the Casimir operator $\bQ$ acting on the tensor product and finding its spectral decomposition. The transformation is later refined in \cite{NT} and consists of composition of several unitary transformations intertwining the action of the generators of $\cU_{q\til{q}}(\sl(2,\R))$. Hence in another recent work \cite{Ip5}, we studied the generalized Casimir operators and the central characters of $\cU_{q\til{q}}(\g_\R)$ which are believed to be key ingredients to understand the tensor product decomposition in the higher rank case. In the process, we discovered the notion of \emph{virtual highest (resp. lowest) weight vectors}, which are certain generalized distributions such that the generators $E_i$ (resp. $F_i$) of $\cU_{q\til{q}}(\g_\R)$ act as zero. In particular, the expressions of the actions of the \emph{positive Casimirs} resembled the classical expressions given by the Weyl character formula. 

Another important observation is the analogy between the quantum factorials (or $q$-Gamma function $\G_q$ in general) and a remarkable special function called the (non-compact) \emph{quantum dilogarithm} $S_b$ \cite{FKa}, which is a generalization of the former to the ill-behaved case of $|q|=1$. In particular the non-compact version satisfies the same functional equation as the $q$-Gamma function (as well as the $q$-exponential function) up to some scaling. Using this observation, in \cite{Ip7} we construct the universal $R$ operator as a product of quantum dilogarithms, generalizing the compact expression which uses the $q$-exponential function, and shows in particular that \emph{braiding} exists for the positive representations. Recently in \cite{CDS}, starting from known solutions of the Yang-Baxter equation in the split real case, simple finite dimensional solutions are also constructed. 

In this paper, we bring together the above observations for the simplest case of $\cU_{q\til{q}}(\sl(2,\R))$. Let $V_N$ be the $N+1$ finite dimensional representation of the compact quantum group $\cU_q(\sl_2)$ spanned by $\{v_{N-2n}\}_{0\leq n\leq N}$. The tensor product decomposition $V_M\ox V_N \simeq \bo+_S V_S$ in terms of basis is then of the form
\Eq{\label{mainC}
v_{S-2k}=\sum C_{m,n,k}^{M,N,S} v_{M-2m}\ox v_{N-2n}
}
where $C_{m,n,k}^{M,N,S}$ (or more precisely the inverse) are called the Clebsch-Gordan coefficients. Their expressions are well-known in the literatures (see e.g. \cite{KS}). The main result of the paper is then the following (Theorem \ref{main} and Theorem \ref{main2})

\begin{MainThm}The integral transformation obtained from the compact case \eqref{mainC} by 
\begin{itemize}
\item[(1)] rescaling the Clebsch-Gordan coefficients according to the theory of canonical (crystal) basis \cite{Kashi,Lu2};
\item[(2)] replacing the $q$-factorials of the Clebsch-Gordan coefficients by the quantum dilogarithm functions $S_b$;
\item[(3)] replacing the integral parameters by real parameters using the expression from the virtual highest weights;
\item[(4)] replacing summation with appropriate contour integrals over the real line;
\end{itemize}
is a unitary transformation giving the tensor product decomposition of positive representations of the split real quantum group $\cU_{q\til{q}}(\sl(2,\R))$. The transformation coincides with the composition of the quantum mutation operator $\cT$ and the spectral decomposition of the Casimir operator up to a unitary scalar.
\end{MainThm}

We therefore propose the following strategy in the higher rank case. Find the finite dimensional Clebsch-Gordan coefficients, rescaled using the normalization from the theory of canonical basis, in terms of linear combinations of $q$-binomials, and replace them with the quantum dilogarithm functions and the summation range with the appropriate integrations. It turns out that the virtual highest weight again provides a correspondence between the finite and infinite expression. In particular the functional equation remains the same and should give an intertwiner between the tensor product of the positive representations. Finally one has to show that this integral transformation is well-defined and unitary in order to complete the proof. We will elaborate on these in the final section of the paper. In particular, together with the known quantum mutation operator $\cT$, we can obtain the spectral decomposition of the \emph{positive Casimirs} in the higher rank. We believe that even establishing the conjecture for lower rank case of $\cU_{q\til{q}}(\sl(3,\R))$ is enough to provide major breakthroughs in the theory of positive representations of split real quantum groups and its many applications as a completely new class of braided tensor categories.

The paper is organized as follows. In Section \ref{sec:prelim} we fix several notations and recall the properties of the quantum dilogarithm functions needed in this paper. In Section \ref{sec:findim}, we review the finite dimensional representation theory of $\cU_q(\sl_2)$ and reconstruct the Clebsch-Gordan coefficients by solving certain functional equation. In Section \ref{sec:posres}, we recall the definition of the positive representations of the split real quantum group $\cU_{q\til{q}}(\sl(2,\R))$ and calculate the intertwining transformation using the replacement rule (1)-(4) from the Main Theorem. In Section \ref{sec:tensor}, we construct a unitary transformation of the tensor product decomposition and show that the two transformations coincide. Finally in Section \ref{sec:higher} we give some remarks on the construction in the higher rank case and provide some examples in the special case of $\cU_{q\til{q}}(\sl(3,\R))$.

\section*{Acknowledgments}
This work is supported by the Top Global University Project, MEXT, Japan.
\section{Preliminaries}\label{sec:prelim}
In this section we fix the notation by recalling the definition of the compact quantum group $\cU_q(\sl_2)$ and its split real version $\cU_{q\til{q}}(\sl(2,\R))$, called the modular double, first introduced in \cite{Fa2}. Then we introduce the special function called the quantum dilogarithms studied extensively in \cite{FKa}, which is a non-compact analogue of the quantum factorials. We will recall its properties and its related integral transformations that are needed throughout the paper. Finally we fix the notion of certain unitary transformations on a Hilbert space that will be needed in this paper.
\subsection{Definition of $\cU_q(\sl_2)$ and $\cU_{q\til{q}}(\sl(2,\R))$}\label{subsec:def}
Let $q\in \C$ which is not a root of unity. In this paper, we will denote by $\cU_q(\sl_2)$ the Hopf algebra generated by $E,F$ and $K^{\pm1}$ subject to the following relations:
\Eq{
KE&=q^2 EK,\\
KF&=q^{-2}FK,\\
[E,F]&=\frac{K-K\inv}{q-q\inv},\\
KK\inv&=K\inv K=1.
}
We will choose the Hopf algebra structure of $\cU_q(\sl_2)$ given by
\Eq{\label{D(E)}
\D(E)&=E\ox 1+K\ox E,\\
\D(F)&=1\ox F+F\ox K\inv,\label{D(F)}\\
\D(K)&=K\ox K,\label{D(K)}\\
\e(E)&=\e(F)=0,\tab \e(K)=1,\\
S(E)&=-qE,\tab S(F)=-q\inv F,\tab S(K)=K\inv.
}

(However we will not need the counit $\e$ and antipode $S$ in this paper.)

In the split real case, we require $|q|=1$. Let $q=e^{\pi \bi b^2}$ where $b^2\in (0,1)\setminus\Q$. Then we define $\cU_q(\sl(2,\R))$ to be the real form of $\cU_q(\sl_2)$ induced by the star structure
\Eq{
E^*=E,\tab F^*=F,\tab K^*=K.
}
Finally, the modular double $\cU_{q\til{q}}(\sl(2,\R))$ is defined to be
\Eq{\label{modulardouble}
\cU_{q\til{q}}(\sl(2,\R)):=\cU_q(\sl(2,\R))\ox \cU_{\til{q}}(\sl(2,\R)),
}
where $\til{q}=e^{\pi \bi b^{-2}}$.
\subsection{Quantum factorials and quantum dilogarithms}\label{subsec:qd}
\begin{Def}
Let $q\in\C$ which is not a root of unity. We will denote the $q$-number by 
\Eq{[z]_q := \frac{q^z-q^{-z}}{q-q\inv},\tab z\in\C,}
the $q$-factorial by
\Eq{[n]!:=\prod_{k=1}^n [k]_q,\tab n\in\Z_{\geq 0}}
with $[0]!:=1$, and the $q$-binomial by
\Eq{\binq{n\\k}:= \case{\frac{[n]!}{[k]![n-k]!}& 0\leq k\leq n\in\Z,\\ 0& otherwise.}}
\end{Def}
The $q$-factorial $[n]!$ can be represented by a meromorphic function called the $q$-Gamma function $\G_q$ where
\Eq{\G_q(n+1)=[n]!,}
and satisfies the functional equation
\Eq{\G_q(z+1)=[z]_q \G_q(z)}
for $z\in\C$ when $|q|\neq 1$. However, in the split real setting, we need to consider the case when $|q|=1$, whence the definition is ill-defined. Instead, we will consider the following remarkable special function called the quantum dilogarithm discovered in \cite{FKa} and its variants.

In the following, let $q=e^{\pi i b^2}$ where $b^2\in (0,1)\setminus\Q$ and let $Q=b+b\inv$.

\begin{Def} The quantum dilogarithm function is defined for $0< Re(z)<Q$ as
\Eq{
G_b(z):= \over[\zeta_b]\exp\left(-\int_{C} \frac{e^{\pi t z}}{(e^{\pi b t}-1)(e^{\pi b\inv t}-1)}\frac{dt}{t}\right),
}
where $\zeta_b = e^{\frac{\pi i }{2}(\frac{b^2+b^{-2}}{6}+\half)}$ and the contour of integration goes over the poles of the denominator. It can be analytic continued to the whole complex plane as a meromorphic function with simple zeros at $z=Q+nb+mb\inv$ and simple poles at $z=-nb-mb\inv$ for $n,m\in\Z_{\geq 0}$.

We will also consider the following variants:
\Eq{
S_b(x)&:=e^{\frac{\pi i x(Q-x)}{2}}G_b(x),\\
g_b(x)&:=\frac{\over[\zeta_b]}{G_b(\frac{Q}{2}+\frac{\log x}{2\pi i b})}.
}
\end{Def}

The quantum dilogarithms enjoyed the following properties \cite{Ip1, PT2}:
\begin{Prop} Self-duality:
\Eq{S_b(x)=S_{b\inv}(x),\tab G_b(x)=G_{b\inv}(x).}

Functional equations: \Eq{\label{funceq}S_b(x+b^{\pm1})=-i(e^{\pi i b^{\pm1} x}-e^{-\pi i b^{\pm1} x})S_b(x),\tab G_b(x+b^{\pm 1})=(1-e^{2\pi ib^{\pm 1}x})G_b(x).}

Reflection property:
\Eq{\label{reflection}S_b(x)S_b(Q-x)=1,\tab G_b(x)G_b(Q-x)=e^{\pi
ix(x-Q)}.}

Complex conjugation: \Eq{\overline{S_b(x)}=\frac{1}{S_b(Q-\over[x])},\tab \overline{G_b(x)}=\frac{1}{G_b(Q-\bar{x})}.}

In particular for $x\in\R$:
\Eq{\left|S_b(\frac{Q}{2}+ix)\right|=\left|G_b(\frac{Q}{2}+ix)\right|=|g_b(e^{2\pi b x})|=1\label{gb1}.}

Asymptotic properties:
\Eq{\label{asymp} G_b(x)\sim\left\{\begin{array}{cc}\bar{\ze_b}&Im(x)\to+\oo,\\\ze_b
e^{\pi ix(x-Q)}&Im(x)\to-\oo.\end{array}\right.}
\end{Prop}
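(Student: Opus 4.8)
The plan is to extract every identity from the integral representation of $G_b$ and then transfer each statement to $S_b$ and $g_b$ through the defining relations $S_b(x)=e^{\pi i x(Q-x)/2}G_b(x)$ and $g_b(x)=\overline{\zeta_b}/G_b(Q/2+\log x/(2\pi i b))$. Self-duality needs no computation: the kernel $[(e^{\pi bt}-1)(e^{\pi b\inv t}-1)]\inv$, the prefactor $\overline{\zeta_b}$ (since $b^2+b^{-2}$ is symmetric under $b\mapsto b\inv$), and $Q=b+b\inv$ are all invariant under $b\leftrightarrow b\inv$, so $G_b(x)=G_{b\inv}(x)$ at once; because the Gaussian factor of $S_b$ depends only on $Q$, the same holds for $S_b$.

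For the functional equations I would form the logarithmic difference $\log G_b(x+b)-\log G_b(x)=-\int_C e^{\pi tx}(e^{\pi bt}-1)[t(e^{\pi bt}-1)(e^{\pi b\inv t}-1)]\inv\,dt$, in which the factor $e^{\pi bt}-1$ cancels, leaving an integrand with a double pole at $t=0$ and simple poles at $t=2inb$. Closing the contour and summing the residues reproduces the series $\log(1-e^{2\pi i b x})$, hence the multiplicative factor $1-e^{2\pi i b x}$; the $b\inv$ case then follows by self-duality. Multiplying by the ratio of Gaussian prefactors of $S_b$ and simplifying turns this factor into the sine-type expression $-i(e^{\pi ib^{\pm1}x}-e^{-\pi ib^{\pm1}x})$. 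For the reflection property I would instead add $\log G_b(x)+\log G_b(Q-x)$, substitute $t\mapsto -t$ in one summand so that both terms acquire the same integrand, and isolate the resulting contribution at $t=0$: the quadratic part of the Laurent expansion of the kernel supplies the exponent $\pi i x(x-Q)$, while its lower-order part must reproduce $2\log\overline{\zeta_b}$. The identity $S_b(x)S_b(Q-x)=1$ is then immediate, since the two Gaussian prefactors multiply to $e^{\pi ix(Q-x)}$, exactly cancelling $G_b(x)G_b(Q-x)=e^{\pi ix(x-Q)}$.

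Complex conjugation I would obtain by conjugating the integral representation---using that $b$ and $t$ are real on the contour and $\overline{\overline{\zeta_b}}=\zeta_b$, so that conjugation reflects the contour across the real axis---and combining with the reflection property to identify $\overline{G_b(x)}$ with $1/G_b(Q-\overline{x})$; specializing to $x=Q/2+iy$ with $y\in\R$ gives $Q-\overline{x}=x$ and hence $|G_b(Q/2+iy)|=1$, from which the unimodularity statements for $S_b$ and $g_b$ follow through their definitions. For the asymptotics, as $\mathrm{Im}(x)\to+\infty$ the oscillatory factor $e^{i\pi t\,\mathrm{Im}(x)}$ forces the integral to vanish by the Riemann--Lebesgue lemma (the remaining integrand being absolutely integrable for $0<\mathrm{Re}(x)<Q$), giving $G_b(x)\to\overline{\zeta_b}$; the limit $\mathrm{Im}(x)\to-\infty$ then follows for free by applying the reflection property to $G_b(Q-x)$, whose argument has imaginary part tending to $+\infty$, together with $1/\overline{\zeta_b}=\zeta_b$.

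The delicate point throughout is the residue bookkeeping at the origin: one must expand the kernel to second order in $t$ and fix the principal-value prescription of the contour around $t=0$ consistently across all the identities. In particular, matching the constant and quadratic contributions in the reflection computation to the precise value $\zeta_b=\exp(\frac{\pi i}{2}(\frac{b^2+b^{-2}}{6}+\half))$ is where all the arithmetic is concentrated, and this is the step I expect to require the most care.
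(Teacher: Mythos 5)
Your proposal is correct, but it cannot be compared line-by-line with the paper, because the paper does not prove this Proposition at all: it is quoted as known, with a citation to the literature (Ip's quantum plane paper and Ponsot--Teschner, the properties going back to Faddeev--Kashaev). What you have written is essentially the standard derivation that those references carry out from the contour-integral representation, and your outline checks out at every step: the kernel, the prefactor $\overline{\zeta_b}$ and $Q=b+b^{-1}$ are indeed $b\leftrightarrow b^{-1}$ symmetric; the difference $\log G_b(x+b)-\log G_b(x)$ does reduce to an integrand with a double pole at $t=0$ and simple poles at $t=2ikb$, whose residues sum to $\log(1-e^{2\pi i b x})$, and the ratio of Gaussian prefactors $e^{\pi i x(Q-x)/2}$ converts this factor into $ie^{-\pi i bx}(1-e^{2\pi ibx})=-i(e^{\pi ibx}-e^{-\pi ibx})$ exactly as claimed; in the reflection computation the substitution $t\mapsto -t$ uses the identity $(e^{-\pi bt}-1)(e^{-\pi b^{-1}t}-1)=e^{-\pi Qt}(e^{\pi bt}-1)(e^{\pi b^{-1}t}-1)$, the pinched residue at the triple pole $t=0$ gives $\pi i x(x-Q)$ from the quadratic term, and its constant term $2\pi i\left(\tfrac{Q^2}{4}-\tfrac{b^2+b^{-2}}{6}-\tfrac14\right)=\pi i\left(\tfrac{b^2+b^{-2}}{6}+\tfrac12\right)$ cancels $2\log\overline{\zeta_b}$ precisely, confirming your remark that the value of $\zeta_b$ is exactly what makes the bookkeeping close; conjugation plus reflection (or, even more directly, conjugation followed by $t\mapsto -t$, using $|\zeta_b|=1$ for real $b$) gives the conjugation identity, unimodularity on $Q/2+i\R$ follows since then $Q-\overline{x}=x$, and the asymptotics follow from Riemann--Lebesgue on the real part of the contour (the small detour above the origin decays since $|e^{i\pi t\,\mathrm{Im}(x)}|=e^{-\pi\,\mathrm{Im}(t)\,\mathrm{Im}(x)}\to 0$ there, a point worth stating explicitly) together with reflection for the lower limit. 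The only caveats are minor: the orientation conventions fixing the sign of the $2\pi i\,\mathrm{Res}$ terms, and the justification for closing the contour in the functional-equation step (one needs the decay supplied by the denominator and a restriction such as $\mathrm{Im}(bx)>0$ followed by analytic continuation), both of which you correctly flag as the delicate points rather than sweep under the rug. In short: the paper buys these facts by citation; your proof supplies what the cited sources actually do.
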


For two formal variables $u,v$ such that $uv=q^2 vu$, we have the $q$-binomial formula:
\Eq{\label{qbi-n}
(u+v)^n = \sum_{k=0}^n q^{-k(n-k)}\binq{n\\k} u^{n-k}v^k .
}
In fact we have a generalization to the split real case as well:
\begin{Lem} \label{qbi}\cite[B.4]{BT} $q$-binomial theorem:
For positive self-adjoint variables $u,v$ with $uv=q^2vu$, we have:
\Eq{\label{qbi-it}
(u+v)^{it}=\int_{\W}q^{\t(t-\t)}\binb{it\\i\t} u^{it-i\t}v^{i\t}d\t ,
}
where the $q$-beta function (or $q$-binomial coefficient) is given by
\Eq{\binb{t\\\t}=\frac{S_b(Q+bt)}{S_b(Q+b\t)S_b(Q+bt-b\t)},}
and $\W$ is the contour along $\R$ that goes above the pole at $\t=0$
and below the pole at $\t=t$.
\end{Lem}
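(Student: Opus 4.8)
The plan is to read the stated formula as the analytic continuation, term by term, of the classical identity \eqref{qbi-n} under the substitution $n\mapsto it$, $k\mapsto i\t$. Under this substitution the scalar weight continues as $q^{-k(n-k)}=q^{-(i\t)(i(t-\t))}\mapsto q^{\t(t-\t)}$, the monomial as $u^{n-k}v^k\mapsto u^{it-i\t}v^{i\t}$, the $q$-binomial $\binq{n\\k}$ as the $q$-beta function $\binb{it\\i\t}$, and the finite sum over $0\le k\le n$ as the contour integral over $\W$; the whole content of the lemma is that this formal continuation is a genuine operator identity. To make this precise I would first fix a concrete model for the relation $uv=q^2vu$: since $u,v$ are positive self-adjoint, the Stone--von Neumann theorem realizes them, up to multiplicity, as $u=e^{2\pi bx}$ and $v=e^{2\pi bp}$ on $L^2(\R)$ with $[x,p]=\frac{1}{2\pi i}$, so that both sides become explicit integral operators and it suffices to check the identity in this model.

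In this model $(u+v)^{it}$ is a unitary one-parameter group, and I would compute its integral kernel in the representation diagonalizing $u$. The kernel is governed by a Fourier-transform formula for the quantum dilogarithm (the non-compact analogue of the $q$-exponential) established in \cite{FKa}: the matrix elements of $(u+v)^{it}$ are expressed through the Fourier transform of a ratio of $G_b$ factors, which by the functional equations \eqref{funceq} and the reflection property \eqref{reflection} collapses to exactly
\[
\binb{it\\i\t}=\frac{S_b(Q+ibt)}{S_b(Q+ib\t)\,S_b(Q+ib(t-\t))}.
\]
Convergence of the $\W$-integral I would read off from the asymptotics \eqref{asymp}: along the real $\t$-direction the denominator grows like a Gaussian while $u^{it-i\t}v^{i\t}$ stays unitary, so the integrand is absolutely integrable in the strong operator topology. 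The placement of $\W$ above the pole at $\t=0$ and below the pole at $\t=t$ is forced by the pole structure of $S_b$, and is precisely what separates the two families of poles responsible for the $u$- and $v$-monomials.

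A more self-contained route, which I would prefer to carry out rigorously, is to characterize $(u+v)^{it}$ by the difference equations it satisfies in $t$. Write $\cI(t)$ for the right-hand integral. Multiplying $\cI(t)$ on the left by $(u+v)$ increments the power, $t\mapsto t-i$; commuting $v$ past the powers of $u$ via $vu=q^{-2}uv$ shifts the integration variable and rescales the $q$-weight, and the two resulting integrals recombine --- using the functional equation $S_b(x+b)=-i(e^{\pi ibx}-e^{-\pi ibx})S_b(x)$ of \eqref{funceq} applied to the numerator $S_b(Q+ibt)$ --- into the single integral representing $\cI(t-i)$, the exact analogue of the Pascal recursion behind \eqref{qbi-n}. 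By self-duality $S_b=S_{b\inv}$ the integral obeys a second, incommensurate companion equation coming from the $b\inv$ functional equation (the modular-dual relation underlying \eqref{modulardouble}), and together with the normalization $\cI(0)=1$ --- a residue computation at the contour pinched at $\t=0$ --- and the Gaussian decay from \eqref{asymp}, a Carlson-type uniqueness theorem pins $\cI(t)$ down as $(u+v)^{it}$. The main obstacle, and where the real work lies, is the analytic bookkeeping in this last step: justifying the interchange of the unbounded operators $u,v$ with the $\W$-integral, checking that the contour shifts cross no spurious poles and that the arcs at infinity vanish (guaranteed, but to be quantified, by \eqref{asymp}), and proving that the pair of difference equations with the prescribed growth has a unique solution. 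Establishing this uniqueness is the conceptual crux on which the whole identity rests.
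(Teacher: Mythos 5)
First, a point of reference: the paper does not prove this lemma at all --- it is imported verbatim from Bytsko--Teschner \cite[App.\ B.4]{BT} --- so your attempt can only be compared with the proof in that source and with the toolkit the paper itself assembles. Measured that way, your route 1 is the right strategy and is essentially how the result is actually proved, but as written it is missing its engine. After fixing the Schr\"odinger model $u=e^{2\pi bx}$, $v=e^{2\pi bp}$ (legitimate, provided the hypothesis $uv=q^2vu$ is read in integrated Weyl form, which is what Stone--von Neumann needs), the kernel of $(u+v)^{it}$ is not obtained from a generic ``Fourier-transform formula of \cite{FKa}''; it is obtained from the conjugation identity $u+v=g_b(w)\,u\,g_b(w)^*$ with $w=qu\inv v=e^{2\pi b(p-x)}$, which is exactly Lemma \ref{unit2}. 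Functional calculus then gives $(u+v)^{it}=g_b(w)\,u^{it}\,g_b(w)^*=g_b(w)\,g_b(e^{-2\pi b^2 t}w)^*\,u^{it}$, i.e.\ a function of the single positive operator $w$ times $u^{it}$; expanding this ratio of quantum dilogarithms in powers $w^{i\t}$ via Lemma \ref{FT} and Lemma \ref{tau} produces precisely the $q$-beta kernel, the Gaussian weight $q^{\t(t-\t)}$ (from reordering $w^{i\t}u^{it}$ into $u^{it-i\t}v^{i\t}$), and the contour $\W$. Without this step your route 1 is a restatement of the claim rather than a proof; with it, the paper's own Lemmas \ref{FT}, \ref{tau} and \ref{unit2} supply everything needed.

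The route you say you would prefer to carry out (difference equations plus uniqueness) has a genuine gap at exactly the point you call the crux, and it is more than ``analytic bookkeeping''. The recursion $(u+v)\cI(t)=\cI(t-i)$ for the integral is fine (contour shift plus a Pascal-type identity, in the spirit of Lemma \ref{intertwine-F}), and self-duality of $S_b$ does give the $b\inv$-companion equation \emph{for the integral} $\cI$. But a two-equation uniqueness argument also requires the \emph{left}-hand side to satisfy the companion equation, and $(\til{u}+\til{v})(u+v)^{it}=(u+v)^{i(t-ib^{-2})}$ is literally the transcendental relation $(u+v)^{1/b^2}=u^{1/b^2}+v^{1/b^2}$ (cf.\ \eqref{transdef}): a nontrivial theorem of Faddeev--Volkov type whose standard proof uses the very conjugation identity of route 1, so this way lies circularity unless you import that relation as a black box. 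If instead you drop to the single equation, then $(u+v)^{-it}\cI(t)$ is $i$-periodic in $t$, and to conclude it is constant (Liouville, after factoring through $e^{2\pi t}$) you need its matrix elements on core vectors to be entire and bounded on a full horizontal strip --- bounds in complex $t$ that you have not supplied and that do not follow from \eqref{asymp} alone, since $(u+v)^{-it}$ is unbounded off the real axis. Two smaller corrections: the normalization $\cI(0)=1$ is a pinch limit, not a residue at an isolated pole --- as $t\to0$ the numerator $S_b(Q+ibt)\to S_b(Q)=0$ while the two pole families collide at $\t=0$ with the contour trapped between them; and the contour shift in your recursion must be checked against the pole families descending from $\t=0$ and ascending from $\t=t$, which is where the stated placement of $\W$ is actually used.
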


\begin{Rem}\label{replaceqd}
The functional equation \eqref{funceq} for $S_b(x)$ implies that 
\Eq{\label{Sb(Q+bx)}\frac{S_b(Q+bx)}{S_b(Q+b(x-1))}=i(q-q\inv)\left[x\right]_q.}
In other words, the function $S_b(Q+bx)$ satisfies the same functional equation as $\G_q(z+1)$ up to a factor. Therefore by comparing the $q$-binomial formula \eqref{qbi-n} and \eqref{qbi-it}, one of the main philosophy to convert from compact case to the split real case is to express every expression in terms of the $q$-binomial function, and replacing the corresponding factorials $[n]!$ by $S_b(Q+bn)$. In that way the dependence on this extra factor $i(q-q\inv)$ will vanish.
\end{Rem}

Next, we recall some integral transformations involving the quantum dilogarithms. 
\begin{Nota}\label{contour}Unless otherwise specified, all the integral contours of $\int_\R dt$ in this paper are chosen to run along $\R$ and goes above all the poles of $G_b(\a-it)$, $G_b(\b+it)\inv$ and below all the poles of $G_b(\c+it)$, $G_b(\d-it)\inv$ for arbitrary $\a,\b,\c,\d\in\C$. Note that shifting the contour by $t\to \w-t$ for some $\w\in\R$ does not change this rule.
\end{Nota}
\begin{Lem}\label{FT} \cite[(3.31), (3.32)]{BT} We have the following Fourier transformation formula:
\Eq{\int_{\R} e^{2\pi i t r}\frac{e^{-\pi i t^2}}{G_b(Q+i
t)}dt=\frac{\bar{\ze_b}}{G_b(\frac{Q}{2}-ir)}=g_b(e^{2\pi b r}), }
\Eq{\int_{\R} e^{2\pi i t r}\frac{e^{-\pi Qt}}{G_b(Q+i
t)}dt=\ze_b G_b(\frac{Q}{2}-ir)=\frac{1}{g_b(e^{2\pi br})}=g_b^*(e^{2\pi br}).}
\end{Lem}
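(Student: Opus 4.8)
The plan is to treat the left-hand side as a function $I(r):=\int_\R e^{2\pi i t r}e^{-\pi i t^2}G_b(Q+it)\inv\,dt$ and to show that it satisfies exactly the same first-order difference equations in $r$ as the proposed answer $J(r):=\bar\ze_b\,G_b(\tfrac{Q}{2}-ir)\inv$, after which only a multiplicative constant remains to be fixed. Using the functional equation \eqref{funceq} for $G_b$ and the reflection relation \eqref{reflection}, a short computation gives $J(r+ib)=(1+q\,e^{2\pi b r})\inv J(r)$, and by self-duality $G_b=G_{b\inv}$ the companion relation with $b$ replaced by $b\inv$. Since $b^2\notin\Q$, the two purely imaginary shifts $r\mapsto r+ib$ and $r\mapsto r+ib\inv$ are incommensurable, so any meromorphic function invariant under both whose growth is controlled by \eqref{asymp} must be constant; it therefore suffices to match these two difference equations together with one normalization.

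First I would derive the difference equation for $I(r)$ by shifting the contour. Writing $\phi(t)=e^{-\pi i t^2}G_b(Q+it)\inv$, the substitution $t\mapsto t+ib$ (legitimate by Notation \ref{contour}, since the poles of $G_b(Q+it)\inv$ sit on the lower imaginary axis) combined with \eqref{funceq} yields the key pointwise identity
\[ \phi(t+ib)=q\,(e^{2\pi b t}-1)\,\phi(t), \]
which follows from $e^{-\pi i(t+ib)^2}=q\,e^{2\pi b t}e^{-\pi i t^2}$ and $G_b(Q+it-b)=(1-e^{-2\pi b t})\inv G_b(Q+it)$. Transcribing this identity into a relation between the corresponding Fourier integrals produces precisely $(1+q\,e^{2\pi b r})\,I(r+ib)=I(r)$, matching the difference equation for $J$; the identical manipulation with the dual functional equation (shift by $ib\inv$) gives the second relation. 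Hence $I(r)/J(r)$ is invariant under both incommensurable shifts and is therefore constant.

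It then remains to fix this constant, which I would do by comparing the two sides as $r\to+\infty$. On the right, the asymptotics \eqref{asymp} applied to $G_b(\tfrac{Q}{2}-ir)$, whose argument has imaginary part tending to $-\infty$, give an explicit Gaussian of the form $\ze_b e^{-\pi i Q^2/4}e^{-\pi i r^2}$; on the left, a stationary-phase estimate of the oscillatory integral $I(r)$, using $G_b(Q+it)\to\bar\ze_b$ as $t\to+\infty$, reproduces the same leading behaviour up to the factor $\bar\ze_b$, which one reads off to conclude $I(r)=\bar\ze_b\,G_b(\tfrac{Q}{2}-ir)\inv=g_b(e^{2\pi b r})$. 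The unitarity $|g_b(e^{2\pi b r})|=1$ from \eqref{gb1} serves as a consistency check. The second formula is obtained by running the same three steps on the integrand $e^{-\pi Q t}G_b(Q+it)\inv$; as a cross-check, its value $g_b^*=1/g_b$ on the real locus is forced by \eqref{gb1}.

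The step I expect to be the genuine obstacle is the analytic justification rather than the algebra. The Gaussian weight $e^{-\pi i t^2}$ has modulus one on $\R$, so $I(r)$ converges only conditionally (Fresnel-type) at the $t\to+\infty$ tail, where $G_b(Q+it)\inv$ is merely bounded, while it decays genuinely as $t\to-\infty$; one must first make sense of $I(r)$ as an oscillatory integral, then verify that the contour shift $t\mapsto t\pm ib$ crosses no poles of $G_b(Q+it)\inv$ (this is exactly what Notation \ref{contour} encodes), and finally confirm that the growth bound required for the ``doubly periodic $\Rightarrow$ constant'' argument really holds. A cleaner but more computational alternative is to close the contour in the half-plane of decay and sum the residues of $G_b(Q+it)\inv$, recognizing the resulting $q$-series as the product expansion of $g_b$.
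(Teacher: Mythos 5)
The paper itself contains no proof of Lemma \ref{FT}: it is quoted verbatim from \cite{BT}, so your attempt can only be judged on its own merits, not against an internal argument. On those merits it is essentially correct, and it follows the standard route by which such Fourier formulas for the non-compact quantum dilogarithm are established. I checked the key steps. The functional equation \eqref{funceq} with $e^{\pi i bQ}=-q$ indeed gives $G_b(\tfrac{Q}{2}-ir+b)=(1+qe^{2\pi br})G_b(\tfrac{Q}{2}-ir)$, hence $(1+qe^{2\pi br})J(r+ib)=J(r)$; your pointwise identity $\phi(t+ib)=q(e^{2\pi bt}-1)\phi(t)$ is correct, and transcribing it through the upward contour shift does produce $(1+qe^{2\pi br})I(r+ib)=I(r)$. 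The uniqueness step is also fine, and in fact you need less than you assume: a function meromorphic on $\C$ and invariant under the two shifts $ib$, $ib\inv$ is invariant under a translation group dense in $i\R$; this already forces it to have no poles and to be constant, with no growth hypothesis. Finally the normalization closes exactly: stationary phase at $t=r$ gives $I(r)\sim \ze_b e^{-\pi i/4}e^{\pi i r^2}$ as $r\to+\infty$, while \eqref{asymp} gives $J(r)\sim \bar{\ze}_b^{\,2}e^{\pi i r^2+\pi i Q^2/4}$, and the elementary identity $\ze_b^3=e^{\pi i/4+\pi iQ^2/4}$ (from the definition of $\ze_b$ and $Q^2=b^2+2+b^{-2}$) shows the constant is $1$, consistent with \eqref{gb1}.

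Two refinements are needed to make this airtight. First, the analytic gap you flag (the shift of a contour on which the integral converges only in the Fresnel sense) has a cleaner resolution than oscillatory-integral estimates: for $b<\mathrm{Im}(r)<Q/2$ (a nonempty strip, since $b^2<1$ implies $b<Q/2$) the factor $e^{2\pi i tr}$ makes the integrand decay exponentially at both ends of both contours $\R$ and $\R+ib$, the rectangle sides vanish, and no poles of $G_b(Q+it)\inv$ lie in between (they sit at $t=-i(nb+mb\inv)$, below the contour of Notation \ref{contour}); so the shift is plain Cauchy's theorem there, and the difference equation propagates to all $r$ by analytic continuation of both sides. Second, your proposed ``cleaner alternative'' --- closing the contour downward and recognizing the residue series as the product expansion of $g_b$ --- is the one step that genuinely fails in the regime of this paper: for $|q|=1$ the product expansion of $g_b$ does not converge, and the residue series at $t=-i(nb+mb\inv)$ involves factors of the type $\prod_k(1-q^{2k})\inv$ which suffer small-divisor problems for generic irrational $b^2$. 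That method is available only for $\mathrm{Im}(b^2)>0$ followed by a limiting argument; the difference-equation argument you lead with is the right one, and is precisely why the $|q|=1$ theory is built on integral representations rather than infinite products.
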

\begin{Lem}\label{tau} \cite[Lem 15]{PT2} We have the Tau-Beta Theorem:
\Eq{\int_\R e^{-2\pi \t \b}
\frac{G_b(\a+i\t)}{G_b(Q+i\t)}d\t =\frac{G_b(\a)G_b(\b)}{G_b(\a+\b)},}
By the asymptotic properties of $G_b$, the integral converges for $Re(\b)>0, Re(\a+\b)<Q$.
\end{Lem}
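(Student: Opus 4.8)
The plan is to prove the identity first on a nonempty open subset of the convergence region $\{Re(\beta)>0,\ Re(\alpha+\beta)<Q\}$ by a finite-difference argument, and then to propagate it to the whole region by analytic continuation. Throughout, write $F(\alpha,\beta)$ for the left-hand side and $R(\alpha,\beta)=G_b(\alpha)G_b(\beta)/G_b(\alpha+\beta)$ for the right-hand side. The first step is to settle convergence and analyticity using the asymptotics \eqref{asymp}: as $\tau\to+\infty$ one has $i\tau\to +i\infty$, so $G_b(\alpha+i\tau)/G_b(Q+i\tau)\to 1$ and the factor $e^{-2\pi\tau\beta}$ forces decay exactly when $Re(\beta)>0$; as $\tau\to-\infty$ the two quadratic exponentials in \eqref{asymp} combine, up to a bounded factor, into $e^{2\pi\tau(Q-\alpha-\beta)}$, which decays precisely when $Re(\alpha+\beta)<Q$. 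Hence $F$ is absolutely convergent and holomorphic on the stated tube domain, as is $R$, so it suffices to establish $F=R$ on a smaller open set.

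The engine of the proof is the functional equation \eqref{funceq}. Substituting $G_b(\alpha+b+i\tau)=(1-e^{2\pi i b\alpha}e^{-2\pi b\tau})\,G_b(\alpha+i\tau)$ under the integral sign and recognising the factor $e^{-2\pi b\tau}$ as the one that converts $e^{-2\pi\tau\beta}$ into $e^{-2\pi\tau(\beta+b)}$, I obtain the three-term relation
\[ F(\alpha+b,\beta)+e^{2\pi i b\alpha}F(\alpha,\beta+b)=F(\alpha,\beta). \]
A direct computation from \eqref{funceq} shows that $R$ satisfies exactly the same relation, and, by self-duality $G_b=G_{b^{-1}}$, both functions satisfy the companion relation with $b$ replaced by $b^{-1}$.

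It remains to upgrade ``same difference equations'' to ``equal''. Putting $\phi=F/R$, the relation above becomes a weighted-mean identity $p\,\phi(\alpha+b,\beta)+s\,\phi(\alpha,\beta+b)=\phi(\alpha,\beta)$ with $p=\tfrac{1-e^{2\pi i b\alpha}}{1-e^{2\pi i b(\alpha+\beta)}}$, $s=\tfrac{e^{2\pi i b\alpha}(1-e^{2\pi i b\beta})}{1-e^{2\pi i b(\alpha+\beta)}}$, $p+s=1$, together with its $b^{-1}$-analogue; the asymptotic matching of $F$ and $R$ from the first step shows $\phi$ is holomorphic and bounded on the relevant tube. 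I expect this constancy step to be the main obstacle, precisely because the natural difference relation couples $\alpha$ and $\beta$ rather than giving a clean one-variable periodicity. Making it rigorous uses the standard Liouville-type argument for bounded quasi-periodic functions with the two incommensurate ``periods'' $b,b^{-1}$ — here the hypothesis $b^2\notin\Q$ is essential — and requires the usual control of $G_b$ along horizontal lines.

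Finally I would fix the constant $\phi\equiv c$ by a degeneration. As $Re(\alpha)\to 0^+$, the numerator pole of $G_b(\alpha+i\tau)$ at $\tau=i\alpha$ (which the contour passes below, per Notation \ref{contour}) descends toward the denominator pole of $1/G_b(Q+i\tau)$ at $\tau=0$ (which the contour passes above), pinching the contour; the resulting leading singularity of $F$ is governed by the residue of $G_b$ at its simple pole $z=0$. On the right, $R$ blows up through the very same residue of $G_b(\alpha)$ while $G_b(\beta)/G_b(\alpha+\beta)\to 1$, and matching the two residue contributions (the residue factor cancels) gives $c=1$. As an alternative to the difference-equation route, one may instead use the reflection property \eqref{reflection} to rewrite the integrand as a Gaussian times $1/\bigl(G_b(Q-\alpha-i\tau)\,G_b(Q+i\tau)\bigr)$, apply the Fourier transform formula of Lemma \ref{FT} to each $1/G_b$ factor, and collapse the resulting double integral by a single Fresnel (Gaussian) integration; this realises the Tau-Beta identity as the noncompact counterpart of the Mellin-convolution derivation of the classical beta integral.
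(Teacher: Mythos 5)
The paper contains no proof of this lemma at all: it is imported verbatim from \cite{PT2}, and the only thing the paper adds is the convergence remark, which your first step reproduces correctly (the ratio $G_b(\alpha+i\tau)/G_b(Q+i\tau)$ tends to $1$ as $\tau\to+\infty$, and by \eqref{asymp} the integrand behaves like a unimodular factor times $e^{2\pi\tau(Q-\alpha-\beta)}$ as $\tau\to-\infty$, giving the tube $Re(\beta)>0$, $Re(\alpha+\beta)<Q$). Your three-term relation is also correct: shifting \eqref{funceq} under the integral gives $F(\alpha+b,\beta)+e^{2\pi i b\alpha}F(\alpha,\beta+b)=F(\alpha,\beta)$, the right-hand side $R$ satisfies the same identity by direct computation, and self-duality supplies the $b^{-1}$ companion. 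So the genuinely new content of your proposal is the uniqueness step, and that is where it breaks down.

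The constancy argument for $\phi=F/R$ is not merely ``the main obstacle''; as written it fails. The function $\phi$ is not quasi-periodic in either variable: the relation $p\,\phi(\alpha+b,\beta)+s\,\phi(\alpha,\beta+b)=\phi(\alpha,\beta)$ couples a shift in $\alpha$ to a shift in $\beta$, so the density-of-$b\Z+b^{-1}\Z$ Liouville argument (which needs genuine invariance under both shifts in one variable) has nothing to act on; and since $p,s$ are complex numbers rather than a convex combination, no maximum-principle variant applies either. Worse, any attempt to iterate the relation raises $Re(\alpha+\beta)$ by $b$ at each step and therefore exits the convergence tube after finitely many iterations, so there is no asymptotic regime inside the domain in which to close an averaging argument; and holomorphy of $\phi$ itself is unproven where $R$ vanishes (poles of $G_b(\alpha+\beta)$ occur in the tube once $Re(\alpha+\beta)\le 0$). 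The residue-pinching computation at the end fixes the multiplicative constant only after constancy is known, so it cannot repair this. The proofs that actually work evaluate the integral rather than characterize it: either close the contour on the poles of $G_b(Q+i\tau)^{-1}$ and sum residues, which factorize into a $q$-series and a $\tilde{q}$-series each summed by the $q$-binomial theorem (essentially the argument underlying \cite{PT2}), or run the Fourier--Gaussian computation via Lemma \ref{FT} and the reflection property \eqref{reflection} that you mention only as a one-sentence alternative at the very end. To salvage the proposal, promote that last alternative to the main argument; the difference-equation route needs a rigidity statement you do not have.
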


\begin{Lem}\label{45}Rewriting the integral transform in \cite{Vo} in terms of $G_b$, we obtain the 4-5 relation given by:
\Eq{\int_\R d\t e^{-2\pi \c\t}\frac{G_b(\a+i\t)G_b(\b+i\t)}{G_b(\a+\b+\c+i\t)G_b(Q+i\t)}=\frac{G_b(\a)G_b(\b)G_b(\c)}{G_b(\a+\c)G_b(\b+\c)},}
By the asymptotic properties of $G_b$, the integral converges for $Re(\c)>0$.
We can also rewrite both sides using the reflection property \eqref{reflection} and rearrange to get
\Eq{
\int_\R d\t e^{-2\pi Q \t}\frac{G_b(\a+i\t)G_b(\b+i\t)}{G_b(\a+\b+\c+i\t)G_b(Q+i\t)}=\frac{e^{-2\pi\a\b}G_b(\a)G_b(\b)G_b(\c)}{G_b(\a+\c)G_b(\b+\c)}.
}
\end{Lem}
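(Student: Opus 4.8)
The plan is to treat the first identity as the modular (non-compact) analogue of the Gauss/first-Barnes beta summation and to pin it down by a contiguity argument built on the functional equation \eqref{funceq} together with the Tau-Beta Theorem (Lemma \ref{tau}); the second identity then follows from the first by a purely algebraic application of the reflection property \eqref{reflection}. Since the statement attributes the result to \cite{Vo}, the quickest route is simply to rewrite Volkov's integral --- stated for a cognate normalization of the quantum dilogarithm --- in terms of $G_b$ via $S_b(x)=e^{\frac{\pi i x(Q-x)}{2}}G_b(x)$, matching the Gaussian prefactors and the contour convention of Notation \ref{contour}. Below I sketch instead a self-contained derivation.

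First I would set $I(\a,\b,\c)$ equal to the left-hand side and let $R(\a,\b,\c)=\frac{G_b(\a)G_b(\b)G_b(\c)}{G_b(\a+\c)G_b(\b+\c)}$ be the right-hand side, and establish that the ratio $I/R$ is invariant under the shifts $\a\mapsto\a+b^{\pm1}$, $\b\mapsto\b+b^{\pm1}$ and $\c\mapsto\c+b^{\pm1}$. For the shift $\b\mapsto\b+b$ one uses \eqref{funceq} on $G_b(\b+i\t)$ and $G_b(\a+\b+\c+i\t)$, writes the resulting ratio $\frac{1-e^{2\pi ib(\b+i\t)}}{1-e^{2\pi ib(\a+\b+\c+i\t)}}$ as $1+(e^{2\pi ib(\a+\c)}-1)e^{2\pi ib\b}\frac{G_b(\a+\b+\c+i\t)}{G_b(\a+\b+\c+b+i\t)}$, and recognizes the two resulting integrals as $I(\a,\b,\c)$ and $I(\a,\b,\c+b)$, giving the three-term relation
\Eq{I(\a,\b+b,\c)=I(\a,\b,\c)+(e^{2\pi ib(\a+\c)}-1)e^{2\pi ib\b}\,I(\a,\b,\c+b).}
A direct computation using \eqref{funceq} on the $G_b$'s in $R$ shows that $R$ satisfies the identical relation, so $I/R$ is unchanged by $\b\mapsto\b+b$; the shift by $b^{-1}$ follows from self-duality, those in $\a$ from the symmetry $\a\leftrightarrow\b$, and those in $\c$ from the analogous computation. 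Because $b^2\notin\Q$ these shifts generate a dense subgroup, and the asymptotics \eqref{asymp} bound the growth of both $I$ and $R$, so a Liouville-type argument forces $I/R$ to be constant. I would fix the constant to $1$ by comparing the behaviour as $\c\to0^+$: the weight $e^{-2\pi\c\t}$ no longer suppresses the tail $\t\to+\infty$, where \eqref{asymp} makes the integrand behave like $e^{-2\pi\c\t}$, producing a simple pole of residue $\frac{1}{2\pi}$ at $\c=0$ which matches exactly the residue of the simple pole of $G_b(\c)$ in $R$.

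For the second identity I would apply the reflection property \eqref{reflection}, in the form $G_b(x)=e^{\pi ix(x-Q)}/G_b(Q-x)$, to each of the four factors $G_b(\a+i\t)$, $G_b(\b+i\t)$, $G_b(\a+\b+\c+i\t)$, $G_b(Q+i\t)$ in the integrand of the first identity. The quadratic-in-$\t$ parts of the four phases cancel, while the linear parts combine with $e^{-2\pi\c\t}$ and, after the substitution $\t\mapsto-\t$ (which preserves the contour by Notation \ref{contour}), convert the weight into $e^{-2\pi Q\t}$; simultaneously applying \eqref{reflection} to $G_b(\c)$, $G_b(\a+\c)$, $G_b(\b+\c)$ on the right-hand side and collecting the remaining constant phases yields precisely the prefactor $e^{-2\pi\a\b}$. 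Convergence of all integrals for $Re(\c)>0$ is read off from \eqref{asymp} exactly as in Lemma \ref{tau}.

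The routine part is the algebra of the functional equation; the genuine obstacle is the analytic justification of the constancy step --- one must verify that under each shift no pole of the integrand crosses the prescribed contour of Notation \ref{contour} (so that the three-term relation holds as stated), control the growth of $I/R$ via \eqref{asymp} well enough to invoke the Liouville-type argument, and justify the limit interchange in the $\c\to0$ residue computation. For the alternative route via \cite{Vo}, the corresponding obstacle is the bookkeeping of Gaussian prefactors and contour conventions needed to pass between Volkov's normalization and the $G_b$-form stated here.
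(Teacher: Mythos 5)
The paper does not actually prove this lemma: the first identity is quoted from \cite{Vo} (transcribed into $G_b$-form), and the second is asserted to follow by reflection and rearrangement, with no details given. Your proposal therefore has to stand on its own, and its two halves fare very differently. For the first identity, the contiguity-plus-rigidity scheme is a legitimate self-contained alternative: the three-term relation you state is correct, and your normalization by the residue at $\c=0$ is exactly right (the $\t\to+\infty$ tail produces $1/(2\pi\c)$, matching the simple pole of $G_b(\c)$). One algebraic slip: the decomposition of the ratio should read $\frac{1-zu}{1-wu}=1+(w-z)\frac{u}{1-wu}$ with $z=e^{2\pi ib\b}$, $w=e^{2\pi ib(\a+\b+\c)}$ and $u=e^{2\pi ib(i\t)}=e^{-2\pi b\t}$; the factor $u$, absent from your formula, is precisely what turns the weight $e^{-2\pi\c\t}$ into $e^{-2\pi(\c+b)\t}$, so that the second integral really is $I(\a,\b,\c+b)$. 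As written your identity for the ratio is false, although the three-term relation you deduce from it is the correct one.

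The genuine gap is in the second identity. Applying \eqref{reflection} to all four factors and substituting $\t\mapsto-\t$ does convert the weight into $e^{-2\pi Q\t}$, but it does not reproduce the integrand of the second identity: one obtains
\[
\int_\R d\t\; e^{-2\pi Q\t}\,\frac{G_b(Q-\a-\b-\c+i\t)\,G_b(i\t)}{G_b(Q-\a+i\t)\,G_b(Q-\b+i\t)},
\]
in which the former denominator factor $G_b(Q+i\t)$ has become the numerator factor $G_b(i\t)$, and neither denominator argument equals $Q$. No relabelling of $(\a,\b,\c)$ brings this to the stated form (matching forces $\a=0$ or $\b=0$, where both sides are singular). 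The missing step is a further contour shift $\t\to\t-i\b$, after which the integral becomes the \emph{first} identity evaluated at the shifted parameters $(Q-\a-\c,\,\b,\,\c)$; assembling the phases from this second application is what actually produces the constant. Moreover, the constant so produced is $e^{-2\pi i\a\b}$, not $e^{-2\pi\a\b}$: every constant your operations can generate (reflection phases $e^{\pi ix(x-Q)}$, shift phases) is a unit phase when $\a,\b,\c$ are real, so a genuine damping factor can never appear. Two independent checks confirm this. First, writing $H(\a,\b,\c)$ for the left-hand side of the second identity and shifting $\b\to\b+b$ via \eqref{funceq} gives $H(\a,\b+b,\c)=e^{-2\pi ib(\a+\c)}H(\a,\b,\c)+(1-e^{-2\pi ib(\a+\c)})H(\a,\b,\c+b)$, which forces any $\c$-independent prefactor $P$ to satisfy $P(\a,\b+b)=e^{-2\pi ib\a}P(\a,\b)$; this holds for $e^{-2\pi i\a\b}$ and fails for $e^{-2\pi\a\b}$. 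Second, your own residue technique applied at $\c\to0^+$: here the divergence comes from the $\t\to-\infty$ tail, where by \eqref{asymp} the integrand behaves like $e^{2\pi\c\t}e^{\pi i\Psi}$ with $\Psi=\a(\a-Q)+\b(\b-Q)-(\a+\b)(\a+\b-Q)=-2\a\b$, so the left side is $\sim e^{-2\pi i\a\b}/(2\pi\c)$ while the stated right side is $\sim e^{-2\pi\a\b}/(2\pi\c)$. In other words, the printed prefactor in the lemma is itself off by an $i$ in the exponent (harmless for the paper, which only ever applies the first, balanced form), and the fact that your bookkeeping ``yields precisely'' $e^{-2\pi\a\b}$ shows the phase computation was asserted rather than performed --- carried out correctly, it would have both required the extra contour-shift step and flagged the discrepancy in the constant.
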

\subsection{Unitary transformations}\label{subsec:unitary}
Often we will deal with unitary transformations $\cT:\cH\to\cH$ on the Hilbert space $\cH=L^2(\R\x \R, dxdy)$ which intertwines the action of the position and momentum operators. For any operator $P$ on $\cH$, its action under the transformation is given by 
\Eq{
P\mapsto \cT \circ P \circ \cT\inv.
}
In particular, let $x$ and $p:=\frac{1}{2\pi i}\del[,x]$ be self-adjoint operators acting on $L^2(\R)$ such that $[p,x]=\frac{1}{2\pi i}$. Then we have 
\begin{Lem}\label{unit1}
Multiplication on $f(x,y)\in L^2(\R\x\R)$ intertwines the action as follows:
\Eq{
e^{\pm 2\pi i xy}&: p_x\mapsto p_x\mp y, \tab p_y\mapsto p_y\mp x,\\
e^{\pm \pi i x^2}&: p_x\mapsto p_x\mp x,\\
e^{\pm 2\pi i p_x y}&: x\mapsto x\pm y,\tab  p_y\mapsto p_y\mp p_x.
}
\end{Lem}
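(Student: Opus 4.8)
The statement is a list of Weyl-algebra conjugation identities, so the plan is to verify each line by the adjoint-action (Hadamard) formula
\[
e^A B e^{-A} = \sum_{n\geq 0}\frac{1}{n!}(\mathrm{ad}_A)^n B, \qquad \mathrm{ad}_A B := [A,B],
\]
together with the observation that in every case the series truncates after the linear term. Before starting I would record the canonical commutation relations on $\cH=L^2(\R\x\R)$: we have $[p_x,x]=[p_y,y]=\frac{1}{2\pi i}$, while $x,y$ commute, $p_x,p_y$ commute, and $p_x$ commutes with $y$ just as $p_y$ commutes with $x$. I would also note that each generator $xy$, $x^2$, $p_xy$ appearing in an exponent is self-adjoint (a product of commuting self-adjoint operators), so by Stone's theorem each of $e^{\pm2\pi ixy}$, $e^{\pm\pi ix^2}$, $e^{\pm2\pi ip_xy}$ is genuinely unitary and the conjugation $P\mapsto\cT P\cT\inv$ is well defined.

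For the first two lines, where $\cT$ is a multiplication operator, I set $A=\pm2\pi ixy$ (resp.\ $\pm\pi ix^2$). Using $[x,p_x]=-\frac{1}{2\pi i}$ and $[y,p_x]=0$, I compute the single commutator $[A,p_x]=\mp y$ (resp.\ $\mp x$), and then $[A,[A,p_x]]=0$ because the inner result is again a multiplication operator commuting with $A$. Hence the Hadamard series stops and $\cT p_x\cT\inv=p_x\mp y$ (resp.\ $p_x\mp x$); the symmetric computation gives $\cT p_y\cT\inv=p_y\mp x$ in the first line, while $x,y$ are obviously fixed. These identities can equivalently be checked by the one-line calculation $e^{\pm2\pi ixy}\frac{1}{2\pi i}\frac{\partial}{\partial x}\left(e^{\mp2\pi ixy}f\right)=(p_x\mp y)f$, which I would include as an elementary cross-check.

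For the third line, $\cT=e^{\pm2\pi ip_xy}$ is the $y$-dependent translation operator $(\cT f)(x,y)=f(x\pm y,y)$; the conjugation $x\mapsto x\pm y$ then follows immediately from this translation picture, and also from $[\pm2\pi ip_xy,x]=\pm y$ with vanishing double commutator. The only conjugation requiring a little care is $\cT p_y\cT\inv$: with $A=\pm2\pi ip_xy$ one finds $[A,p_y]=\mp p_x$ (using $[y,p_y]=-\frac{1}{2\pi i}$ and $[p_x,p_y]=0$), and then $[A,\mp p_x]=0$ since $[y,p_x]=0$, so again the series truncates and $\cT p_y\cT\inv=p_y\mp p_x$.

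The argument is essentially a bookkeeping exercise, and the only genuine point to be careful about is analytic rather than algebraic. Since all operators are unbounded, I would fix a common invariant core, the Schwartz space $\cS(\R\x\R)$, on which all the commutators above are defined, the exponentials act, and the truncated Hadamard formula is an honest operator identity; the extension to the full domains is then automatic by self-adjointness and continuity. The main (mild) obstacle is thus simply to phrase the otherwise formal Baker--Campbell--Hausdorff manipulation so that it is rigorous for these unbounded generators, which the finite truncation of every series makes routine.
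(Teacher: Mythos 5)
Your proposal is correct: all sign conventions check out against $[p,x]=\frac{1}{2\pi i}$ (e.g.\ $[\pm 2\pi i xy,\,p_x]=\mp y$, $[\pm 2\pi i p_x y,\,x]=\pm y$, $[\pm 2\pi i p_x y,\,p_y]=\mp p_x$, each with vanishing double commutator), and the translation picture $(e^{\pm 2\pi i p_x y}f)(x,y)=f(x\pm y,y)$ confirms the third line. The paper states Lemma \ref{unit1} without any proof, treating it as a standard fact about Weyl-pair conjugations, so your truncated-Hadamard computation together with the domain remark on $\cS(\R\x\R)$ is precisely the routine verification the paper takes for granted.
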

The more important transformations are given by the quantum dilogarithm. By \eqref{gb1}, the operator $g_b(u)$ is unitary whenever $u$ is a positive self-adjoint operator, defined through the Fourier transformation formula in Lemma \ref{FT}.
\begin{Lem}\label{unit2}\cite{BT} Let $u,v$ be positive self-adjoint operators with $uv=q^2 vu$. Then 
\Eq{
g_b(u)^*vg_b(u) &= q\inv uv+v,\\
g_b(v)ug_b(v)^* &= u+q\inv uv.
}
These imply
\Eq{
g_b(u)g_b(v)&=g_b(u+v),\\
g_b(v)g_b(u)&=g_b(u)g_b(q\inv ub)g_b(v),
}
which are often referred to as the quantum exponential and quantum pentagon relations respectively.
\end{Lem}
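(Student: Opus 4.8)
The plan is to reduce the whole lemma to two elementary inputs and one genuinely analytic statement. The two easy inputs are: (i) the \emph{unitarity} of $g_b$ on a positive self-adjoint operator, which is immediate from \eqref{gb1} and the spectral theorem, so that $g_b(u)^*=g_b(u)\inv$ and $g_b(v)^*=g_b(v)\inv$; and (ii) a first-order $q$-difference equation for $g_b$. To obtain (ii) I would write $g_b(x)=\overline{\zeta_b}\,G_b(\frac{Q}{2}+\frac{\log x}{2\pi i b})\inv$ and apply the functional equation \eqref{funceq} for $G_b$ under the shift $x\mapsto q^2x$, which corresponds to $\frac{Q}{2}+\frac{\log x}{2\pi i b}\mapsto \frac{Q}{2}+\frac{\log x}{2\pi i b}+b$. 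Since $e^{2\pi i b(\frac{Q}{2}+\frac{\log x}{2\pi i b})}=-qx$, this gives $g_b(q^2x)(1+qx)=g_b(x)$, equivalently $g_b(q^{-2}x)=(1+q\inv x)g_b(x)$.

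Given these, the two conjugation identities are purely formal. From $uv=q^2vu$ one has $uf(v)u\inv=f(q^2v)$ and $vf(u)v\inv=f(q^{-2}u)$ for any $f$ defined by the functional calculus. Applying the difference equation, $u\inv g_b(v)u=g_b(q^{-2}v)=(1+q\inv v)g_b(v)$, and rearranging yields $g_b(v)ug_b(v)\inv=u(1+q\inv v)=u+q\inv uv$; symmetrically $vg_b(u)v\inv=g_b(q^{-2}u)=(1+q\inv u)g_b(u)$ gives $g_b(u)\inv vg_b(u)=(1+q\inv u)v=v+q\inv uv$. Together with (i) these are exactly the two stated relations.

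Next I would derive the pentagon relation from the quantum exponential relation. A direct check shows that $(u,\,q\inv uv)$ is again a $q^2$-Weyl pair of positive self-adjoint operators: indeed $u(q\inv uv)=q^2(q\inv uv)u$, and $(q\inv uv)^*=q\inv uv$ because $|q|=1$. Granting $g_b(u)g_b(q\inv uv)=g_b(u+q\inv uv)$, the conjugation identity just proved gives $u+q\inv uv=g_b(v)ug_b(v)\inv$, whence $g_b(u+q\inv uv)=g_b(v)g_b(u)g_b(v)\inv$; multiplying on the right by $g_b(v)$ produces $g_b(v)g_b(u)=g_b(u)g_b(q\inv uv)g_b(v)$, which is the pentagon relation (the symbol $g_b(q\inv ub)$ in the statement is a typo for $g_b(q\inv uv)$). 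Thus the quantum exponential relation $g_b(u)g_b(v)=g_b(u+v)$ is the remaining statement, and all the algebra above rests on it.

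The hard part is therefore the exponential relation, and here the naive centrality trick fails: one checks that $g_b(u)g_b(v)$ does \emph{not} commute with $u+v$, so $g_b(u)g_b(v)g_b(u+v)\inv$ is not a scalar. Instead I would pass to the integral representations. Realizing $u=e^{2\pi b x}$ and $v=e^{2\pi b p}$ on $L^2(\R)$ with $[p,x]=\frac{1}{2\pi i}$, the operator $g_b(u)$ acts by multiplication while $g_b(v)$ acquires the Fourier kernel read off from Lemma \ref{FT}. I would then expand $g_b(u+v)$ as a superposition of powers $(u+v)^{it}$ via the transform in Lemma \ref{FT}, insert the $q$-binomial theorem (Lemma \ref{qbi}) in the form $(u+v)^{it}=\int_\W q^{\t(t-\t)}\binb{it\\i\t}u^{it-i\t}v^{i\t}\,d\t$ to pass to ordered monomials $u^\bullet v^\bullet$, and match against the analogous double-integral expansion of $g_b(u)g_b(v)$; the comparison should reduce to a beta-type integral supplied by the Tau--Beta and $4$--$5$ relations (Lemmas \ref{tau} and \ref{45}) together with the $S_b$-functional equation \eqref{funceq} governing the $q$-beta kernel $\binb{\cdot\\\cdot}$. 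The main obstacle is to make this rigorous for unbounded operators: one must fix a common core, justify the interchange of the nested quantum-dilogarithm integrals and the attendant contour shifts (the conventions of Notation \ref{contour} being essential here), and control convergence at the ends of the contours using the asymptotics \eqref{asymp}. Once the exponential relation holds as a genuine operator identity on a dense domain, the conjugation and pentagon relations follow by the algebraic steps above.
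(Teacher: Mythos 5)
The paper itself gives no proof of this lemma --- it is quoted verbatim from \cite{BT} --- so your attempt must stand on its own. Most of it does: the unitarity of $g_b(u)$ via \eqref{gb1}, the difference equation $g_b(q^{-2}x)=(1+q\inv x)g_b(x)$ extracted from \eqref{funceq}, the two conjugation identities derived from it, and the deduction of the pentagon relation from the exponential relation (including the correct observation that $g_b(q\inv ub)$ is a typo for $g_b(q\inv uv)$) are all correct, and this is the standard algebraic skeleton. One caveat: conjugation by the \emph{unbounded} operator $u$ in ``$u\inv g_b(v)u=g_b(q^{-2}v)$'' needs the same domain care you later demand of the integral argument; the safe route is to conjugate the unitary groups by $u^{it}$ and continue analytically, or to check the identity on the core $\cW_z$. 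However, your stated reason for abandoning the ``centrality trick'' is false: $g_b(u)g_b(v)$ \emph{does} commute with $u+v$, and this follows from the very identities you proved, since
\Eqn{
\bigl(g_b(u)g_b(v)\bigr)(u+v)\bigl(g_b(u)g_b(v)\bigr)^*
&=g_b(u)\bigl(u+q\inv uv+v\bigr)g_b(u)^*\\
&=u+g_b(u)\bigl(g_b(u)^*\,v\,g_b(u)\bigr)g_b(u)^*\\
&=u+v,
}
where the first equality uses $g_b(v)ug_b(v)^*=u+q\inv uv$ and the second uses $g_b(u)^*vg_b(u)=q\inv uv+v$. (It is also forced by the statement being proven, since $g_b(u+v)$ certainly commutes with $u+v$.) The true obstruction is different: commuting with $u+v$ does not make an operator scalar. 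Granting that $u+v$ is essentially self-adjoint with simple spectrum (which follows from your second identity together with Stone--von Neumann, as $u+v$ is then unitarily equivalent to $u+q\inv uv$, hence to $u$), it only shows $g_b(u)g_b(v)=F(u+v)$ for some unimodular Borel function $F$; the entire difficulty is identifying $F=g_b$. So passing to integral representations is the right move, but for the wrong reason, and the false claim must be removed.

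That identification is where your proof stops being a proof. The plan --- expand $g_b(u+v)$ by Lemma \ref{FT}, insert Lemma \ref{qbi}, expand $g_b(u)g_b(v)$ as a double integral, and match kernels via Lemmas \ref{tau} and \ref{45} --- is viable, but the matching step silently assumes that an expansion in ordered monomials $u^{i\alpha}v^{i\beta}$ is unique; this requires a Plancherel-type argument for the Weyl pair (equivalently, irreducibility in the realization $u=e^{2\pi bx}$, $v=e^{2\pi bp}$, to which the general case should be reduced by Stone--von Neumann --- a reduction you use implicitly but never state, even though the lemma concerns abstract positive self-adjoint $u,v$). Without it, ``comparing coefficients'' of operator-valued integrals is not licensed. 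You should also flag a circularity risk: inside this paper, Lemma \ref{qbi} and the present lemma are independent imports from \cite{BT}, so deriving one from the other is internally consistent; but in \cite{BT} itself the binomial theorem and the exponential relation are established through the same Ramanujan-type integral identities, so a from-scratch proof must verify that the version of Lemma \ref{qbi} you invoke is not itself proven using $g_b(u)g_b(v)=g_b(u+v)$. Until the kernel matching, the contour bookkeeping of Notation \ref{contour}, and the uniqueness statement are written out, what you have is a correct reduction of the full lemma to the exponential relation, plus a credible but unexecuted plan for the latter.
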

As an example let $u=e^{2\pi b x}$ and $v=e^{2\pi b p}$. Here $v$ acts as shifting by $-ib$. Then $g_b(e^{2\pi b x})$ (as multiplication operator) maps
$$e^{2\pi b (x+p)}+e^{2\pi b p}\mapsto e^{2\pi b p}$$
by conjugation.

\section{Finite dimensional representations of $\cU_q(\sl_2)$}\label{sec:findim}
The theory of finite dimensional representations of the compact quantum group $\cU_q(\sl_2)$ is well known in the literature. Nice reviews can be found in \cite{CP, HK, KS}. In this paper, we will consider the following finite dimensional irreducible representations of $\cU_q(\sl_2)$:

\begin{Def}
Denote by $V_N$ the following $N+1$ dimensional irreducible representation spanned by the basis $\{v_{N-2n}\}_{0\leq n\leq N}=\{v_{N},v_{N-2},..., v_{-N}\}$ with the following action:
\Eq{
\label{E}E\cdot v_{N-2n} &= [n]_qv_{N-2n+2},\\
\label{F}F\cdot v_{N-2n} &= [N-n]_qv_{N-2n-2},\\
\label{K}K\cdot v_{N-2n} &= q^{N-2n} v_{N-2n}.
}
In particular, $v_N$ is the highest weight vector while $v_{-N}$ is the lowest weight vector.
\end{Def}
\begin{Rem} There exists another series of representations $V_N^-$ where the action of $E$ and $K$ carry a minus sign \cite{KS}. These two series constitute all finite dimensional irreducible representations of $\cU_q(\sl_2)$ for generic $q$. However we will not consider $V_N^-$ in this paper. 
\end{Rem}

\subsection{Clebsch-Gordan coefficients}\label{subsec:CG}
Consider the tensor product $V_M\ox V_N$ where the action of $\cU_q(\sl_2)$ is given by the coproduct \eqref{D(E)}-\eqref{D(K)}. Then it is well known that it decomposes into irreducibles as follows:
\Eq{
V_M\ox V_N \simeq \bo+_{\substack{S=|M-N|\\S\cong M+N\mbox{ (mod 2)}}}^{M+N} V_S.
}

Let us rename the basis vector and denote the basis of $V_M\ox V_N$ by $\{x_{M-2m}\ox y_{N-2n}\}$ where $0\leq m\leq M$ and $0\leq n \leq N$. For each $S$ with $|M-N|\leq S\leq M+N$ and $S\cong M+N \mbox{ (mod 2)}$, we denote the basis of $V_S$ by $\{v_{S-2k}\}$ where $0\leq k\leq S$. We want to establish explicitly the following transformation:
\Eq{\label{CG}
v_{S-2k} = \sum_{m,n} C_{m,n,k}^{M,N,S} x_{M-2m}\ox y_{N-2n},
}
where the matrix coefficients $C_{m,n,k}^{M,N,S}$ (or more precisely the inverse) are known as the Clebsch-Gordan coefficients. Their expressions are well-known in the literatures (see e.g. \cite{KS}) for other expressions of the representations $V_N$, which also take into account certain invariant bilinear form on the representations. However, it is instructive for us to re-derive the coefficients for our representation \eqref{E}-\eqref{K} since the same strategy will be used in the split real case. We will do so by solving the functional equations they have to satisfy in order to be an intertwiner of the action of $\cU_q(\sl_2)$. Similar methods have been used widely in the literature (see e.g. \cite{GKK, Ma}).

Since we fixed $M,N$ and $S$, for notation convenience we will write $C_{m,n}^k:=C_{m,n,k}^{M,N,S}$ when no confusion can arise.

\begin{Lem} \label{CG-K}$C_{m,n}^k$ is of the form 
\Eq{C_m^k\cdot \d_{m+n,k+d},}
where $d=\frac{M+N-S}{2}\in \Z_{\geq 0}$ and $\d_{mn}$ is the Kronecker delta function.
\end{Lem}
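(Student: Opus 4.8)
The plan is to exploit the fact that the coefficients $C_{m,n}^k$ must intertwine the action of $K$, which is the easiest generator to analyze because it acts diagonally. First I would apply $\D(K) = K\ox K$ from \eqref{D(K)} to the right-hand side of \eqref{CG}. On the basis vector $x_{M-2m}\ox y_{N-2n}$, the operator $K\ox K$ acts by the scalar $q^{(M-2m)+(N-2n)} = q^{M+N-2(m+n)}$, as seen from \eqref{K}. On the left-hand side, $v_{S-2k}$ is a weight vector in $V_S$, so $K\cdot v_{S-2k} = q^{S-2k}v_{S-2k}$.

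Since \eqref{CG} must hold as an identity of $K$-eigenvectors, and the $x_{M-2m}\ox y_{N-2n}$ form a basis of simultaneous eigenvectors for $K\ox K$, comparing eigenvalues forces every term appearing with nonzero coefficient to have the same weight as $v_{S-2k}$. That is, the only surviving terms are those for which
\Eq{
q^{M+N-2(m+n)} = q^{S-2k}.
}
Because $q$ is not a root of unity, this equality of powers forces the exponents to agree: $M+N-2(m+n) = S-2k$, which rearranges to
\Eq{
m+n = k + \frac{M+N-S}{2} = k+d.
}
Hence $C_{m,n}^k = 0$ unless $m+n = k+d$, which is precisely the statement that $C_{m,n}^k$ is proportional to $\d_{m+n,k+d}$. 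Writing $C_m^k := C_{m,n}^k$ for the unique surviving value $n = k+d-m$ then gives the claimed form $C_m^k\cdot \d_{m+n,k+d}$. It remains only to check that $d = \frac{M+N-S}{2}$ is a nonnegative integer: nonnegativity follows from $S\leq M+N$, and integrality follows from the congruence $S\cong M+N \pmod 2$ imposed in the decomposition, so that $M+N-S$ is even.

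I do not expect any serious obstacle here, as the argument is essentially linear algebra over a weight-space decomposition. The one point requiring care is the justification that equality of the scalars $q^{M+N-2(m+n)}$ and $q^{S-2k}$ genuinely implies equality of exponents; this is exactly where the hypothesis that $q$ is not a root of unity (stated in Section \ref{subsec:def}) is used, since otherwise distinct exponents could yield the same power of $q$ and the weight spaces would not be distinguished. Everything else is bookkeeping on the allowed ranges of $m$, $n$, $k$, and $d$.
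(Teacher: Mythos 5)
Your proof is correct and follows essentially the same route as the paper: both compare the $K$-eigenvalue $q^{S-2k}$ on the left of \eqref{CG} with the eigenvalue $q^{M+N-2(m+n)}$ of $\D(K)=K\ox K$ on each basis vector, and conclude $m+n=k+d$. Your explicit appeal to $q$ not being a root of unity (to pass from equal powers of $q$ to equal exponents) and your check that $d\in\Z_{\geq 0}$ are details the paper leaves implicit, but the argument is the same.
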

\begin{proof}
Consider the action of $K$ on both sides of \eqref{CG}. On the left hand side it acts as multiplication by $q^{S-2k}$ while on the right hand side it acts as $\D(K)=K\ox K$, which is multiplication by $q^{M-2m}q^{N-2n}$. Hence we need $S-2k=M+N-2m-2n$, which means $m+n=k+d$ as required.
\end{proof}
Note that $C_{m,n}^k$ is not identically zero only when $0\leq m\leq M$ and $0\leq k+d-m \leq N$. Also the condition $|M-N|\leq S \leq M+N$ implies $0\leq d \leq min(M,N)$.

\begin{Lem} \label{CG-E} Assume $C_m^k$ is not identically zero. Then it is of the form
\Eq{\label{CG-Eeq}
C_m^k=\sum_{r=0}^k (-q)^{r+m-k}q^{(r+m-k)(M-m)}\binq{k\\r}\binq{r+d\\k-m+d}c_r,
}
where $c_r:=C_{0,r+d}^r$ are constants that depends only on $r$ and $M,N,S$, and identically zero if the indices are outside of range.
\end{Lem}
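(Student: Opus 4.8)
The plan is to extract a recursion in the index $m$ from the requirement that the map \eqref{CG} intertwine the raising operator $E$, and then to solve that recursion by induction on $m$, treating the edge coefficients $c_r=C_0^r$ as initial data. Concretely, I would apply $\D(E)=E\ox 1+K\ox E$ to $v_{S-2k}=\sum_m C_m^k\,x_{M-2m}\ox y_{N-2n}$, with $n=k+d-m$ forced by Lemma \ref{CG-K}, and compare the result with $E\cdot v_{S-2k}=[k]_q v_{S-2(k-1)}$. Using \eqref{E} and \eqref{K}, the summand $E\ox 1$ lowers the $V_M$-index by one, while $K\ox E$ lowers the $V_N$-index by one and contributes a factor $q^{M-2m}$. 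Matching the coefficient of each basis vector $x_{M-2m}\ox y_{\cdots}$ then yields the two-term recursion
\Eq{
[m+1]_q\,C_{m+1}^k=[k]_q\,C_m^{k-1}-q^{M-2m}[k+d-m]_q\,C_m^k.
}

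For the base case I would record that $C_0^k=c_k$ by definition of $c_r$, and check that the claimed formula reduces to $c_k$ at $m=0$: the factor $\binq{r+d\\k+d}$ is nonzero only for $r\geq k$, which together with $0\leq r\leq k$ from $\binq{k\\r}$ forces $r=k$. Since the recursion expresses $C_{m+1}^k$ entirely in terms of the level-$m$ data $C_m^k$ and $C_m^{k-1}$, the coefficients are uniquely determined by $\{c_r\}_{r\leq k}$, and a single induction on $m$ (simultaneously in $k$) establishes the closed form. In the inductive step I would substitute the formula for $C_m^k$ and $C_m^{k-1}$ into the recursion and verify, term by term in the summation index $r$, that the right-hand side equals the claimed formula for $C_{m+1}^k$.

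The main work — and the step I expect to be the genuine obstacle — is this term-by-term verification, which reduces to a $q$-binomial identity. After cancelling the common power $(-q)^{r+m-k}$ and the factor $\binq{k\\r}$, using $[k]_q\binq{k-1\\r}=[k-r]_q\binq{k\\r}$ to rewrite the $C_m^{k-1}$-term, the identity relates $\binq{r+d\\k-m-1+d}$ and $\binq{r+d\\k-m+d}$ through the Pascal-type relation $[k-m+d]_q\binq{r+d\\k-m+d}=[r-k+m+1]_q\binq{r+d\\k-m-1+d}$, combined with the bookkeeping of the $q$-powers arising from the exponent $(r+m-k)(M-m)$. Keeping track of these powers of $q$ and the signs is the delicate part; all the representation-theoretic content is already captured in the clean recursion above.

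Finally, I would dispose of the range conventions. The ``identically zero outside the range'' clause is handled automatically by the vanishing of $q$-binomials outside $0\leq k\leq n$: the factor $\binq{k\\r}$ restricts $0\leq r\leq k$ and $\binq{r+d\\k-m+d}$ enforces the remaining weight constraints, so no separate case analysis is needed once one adopts the convention that out-of-range symbols vanish. I emphasize that only the $E$-intertwining enters here, so the edge values $c_r$ remain \emph{free} at this stage; the relations among them are a separate matter, to be obtained later from the $F$-action.
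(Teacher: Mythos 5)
Your proposal is correct and takes essentially the same route as the paper: both extract the two-term recursion $[m+1]_q C_{m+1}^k=[k]_q C_m^{k-1}-q^{M-2m}[k+d-m]_q C_m^k$ from the $E$-intertwining (the paper's equation \eqref{Func-E} rearranged) and then induct on $m$ with the edge values $c_r=C_0^r$ as free initial data, leaving the $F$-action for later. The only difference is stylistic—the paper unwinds the recursion through an intermediate $t$-step formula proved with a Pascal identity and then sets $t=m$, whereas you verify the stated closed form directly against the recursion; your key identities $[k]_q\binq{k-1\\r}=[k-r]_q\binq{k\\r}$ and $[k-m+d]_q\binq{r+d\\k-m+d}=[r-k+m+1]_q\binq{r+d\\k-m-1+d}$ are correct, and the remaining $q$-power bookkeeping indeed closes via $q^{k-r}[m+1]_q=q^{m+1}[k-r]_q+[r-k+m+1]_q$, so the term-by-term check goes through.
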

\begin{proof}
We consider the action of $E$ on both sides of \eqref{CG}. We have
\Eqn{
E\cdot v_{S-2k}&=[k]_qv_{S-2k+2}\\
&=[k]_q\sum_{m+n=k-1+d} C_{m,n}^{k-1} x_{M-2m}\ox y_{N-2n},\\
E\cdot RHS&=[m]_q\sum_{m+n=k+d} C_{m,n}^{k} x_{M-2m+2}\ox y_{N-2n}+q^{M-2m}[n]_q\sum_{m+n=k+d} C_{m,n}^{k} x_{M-2m}\ox y_{N-2n+2}\\
&=\sum_{m+n=k-1+d}\left([m+1]_qC_{m+1,n}^k+q^{M-2m}[n+1]_qC_{m,n+1}^k\right)x_{M-2m}\ox y_{N-2n}.
}
Hence we need to solve the functional equation
\Eq{\label{Func-E}
[k]_qC_{m,n}^{k-1}=[m+1]_qC_{m+1,n}^k+q^{M-2m}[n+1]_qC_{m,n+1}^k,
}
or (shifting $m\to m-1$)
$$C_{m}^k=\frac{[k]_q}{[m]_q}C_{m-1}^{k-1}-q^{M-2m+2}\frac{[k+d-m+1]_q}{[m]_q}C_{m-1}^k.$$
Note that this functional equation is still valid when the indices are out or range.

Now by induction on $m$, we easily obtain
\Eqn{
C_{m}^k&=\sum_{r=0}^t (-1)^r q^{r(M-2m+t+1)}\frac{[m-t]!}{[m]!}\binq{t\\r}\frac{[k]!}{[k-t+r]!}\frac{[k+d-m+r]!}{[k+d-m]!}C_{m-t}^{k-t+r}
}
for $0\leq t\leq m$, which follows from the Pascal identity
\Eqn{
&q^{t-r}\binq{t\\r}+q^{-r-1}\binq{t\\r+1}=\binq{t+1\\r+1}.
}

Hence taking $t=m$,
\Eqn{
C_{m}^k &= \sum_{r=0}^m (-q)^{r}\frac{q^{r(M-m)}}{[m]!}\binq{m\\r}\frac{[k]!}{[k-m+r]!}\frac{[k+d-m+r]!}{[k+d-m]!}C_{0}^{k-m+r}.
}
Re-indexing the summation from $k-m+r\to r$ and simplifying, we obtain
\Eqn{
C_{m}^k = \sum_{r=k-m}^k (-q)^{r+m-k}q^{(r+m-k)(M-m)}\binq{k\\r}\binq{r+d\\k-m+d}C_{0}^{r}.
}
Finally note that $\binq{k\\r}=0$ for $r<0$ and $\binq{r+d\\k-m+d}=0$ for $r<k-m$. Hence we can rewrite the summation of $r$ as $\sum_{r=0}^k$.
\end{proof}

\begin{Lem} If $c_r:=C_{0,r+d}^r$ is not identically zero (i.e. $r+d\leq N$), then it is of the form:
\Eq{\label{cp2}
c_r = \binq{N-d\\r}\binq{S\\r}\inv c_0,
}
where $c_0:=C_{0,d}^0$ is a constant.
\end{Lem}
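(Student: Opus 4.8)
The plan is to pin down the remaining constants $c_r=C_{0,r+d}^r$ by invoking the action of the one generator not yet used, namely $F$. The relation from $K$ gave the Kronecker delta and the relation from $E$ expressed each $C_m^k$ as a linear combination of the $c_r$ via \eqref{CG-Eeq}; but $E$ cannot by itself determine the $c_r$, because setting $m=0$ in \eqref{Func-E} brings in $C_{1,n}^k$, which is not of the form $c_r$. Requiring \eqref{CG} to intertwine $F$ will instead produce a closed two-term recursion in $r$, determining the $c_r$ uniquely up to the single scalar $c_0=C_{0,d}^0$.

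First I would apply $F$ to both sides of \eqref{CG} using $\D(F)=1\ox F+F\ox K\inv$ together with \eqref{F} and \eqref{K}. The left side is $[S-k]_q v_{S-2k-2}$, while on the right each term contributes
\[
\D(F)(x_{M-2m}\ox y_{N-2n})=[N-n]_q\,x_{M-2m}\ox y_{N-2n-2}+q^{-(N-2n)}[M-m]_q\,x_{M-2m-2}\ox y_{N-2n}.
\]
Matching the coefficient of $x_{M-2m}\ox y_{N-2n}$, now with $m+n=k+1+d$, yields the $F$-analogue of \eqref{Func-E}:
\[
[S-k]_q C_{m,n}^{k+1}=[N-n+1]_q C_{m,n-1}^{k}+q^{-(N-2n)}[M-m+1]_q C_{m-1,n}^{k}.
\]

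The key step is to specialize this to $m=0$, hence $n=k+1+d$. The last term then involves $C_{-1,n}^k$, which vanishes because its index is out of range, and the surviving terms are precisely $c_{k+1}$ and $c_k$; since $N-n+1=N-d-k$ the equation collapses to the two-term recursion $[S-k]_q\,c_{k+1}=[N-d-k]_q\,c_k$. Writing $r=k$ and telescoping from $c_0$, I would confirm the claimed closed form \eqref{cp2} by verifying that it satisfies $c_{r+1}/c_r=[N-d-r]_q/[S-r]_q$; this follows at once from the identity $\binq{n\\r+1}=\frac{[n-r]_q}{[r+1]_q}\binq{n\\r}$ applied with $n=N-d$ and $n=S$, the factors $[r+1]_q$ cancelling, while the base case $r=0$ holds since both $q$-binomials equal $1$. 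I do not anticipate a genuine obstacle here: the argument is a single clean computation, and the only points demanding care are the vanishing of the boundary coefficient $C_{-1,n}^k=0$ — which is exactly what makes the $m=0$ relation two-term and hence solvable — and the index bookkeeping in the coefficient matching. Compatibility of these $c_r$ with the full $F$-equation for general $m$ is then guaranteed by the existence of the intertwiner \eqref{CG}.
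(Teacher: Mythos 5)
Your proposal is correct and follows essentially the same route as the paper: both derive the $F$-functional equation $[S-k]_q C_{m,n}^{k+1}=[N-n+1]_qC_{m,n-1}^k+q^{2n-N}[M-m+1]_qC_{m-1,n}^k$ from the coproduct $\D(F)=1\ox F+F\ox K\inv$, specialize to $m=0$ so that the out-of-range term $C_{-1,n}^k$ drops out, and telescope the resulting two-term recursion $[S-k]_q c_{k+1}=[N-k-d]_q c_k$ to the claimed closed form.
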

\begin{proof} If $c_r$ is not identically zero, then $c_{r'}$ are also not identically zero for $0\leq r'\leq r$. Now consider the action of $F$ on both sides of $\eqref{CG}$. Then we arrive at
\Eq{\label{Func-F}
[S-k]_q C_{m,n}^{k+1} = [N-n+1]_qC_{m,n-1}^k+q^{2n-N}[M-m+1]_qC_{m-1,n}^k.
}
When $m=0$, the last term on right hand side vanishes, hence we have the relation
\Eqn{
[S-k]_q c_{k+1} = [N-n+1]_qc_k=[N-k-d]_qc_k.
}
Hence we easily obtain
\Eq{\label{cp}
c_r &= \frac{[N-r-d+1]_q}{[S-r+1]_q}c_{r-1}\\
&=...=\binq{N-d\\r}\binq{S\\r}\inv c_0\nonumber
}
as required.
\end{proof}
Hence combining the above Lemmas, we obtain
\begin{Thm} \label{CG-full}The Clebsch-Gordan coefficients for the projection of $V_M\ox V_N$ onto $V_S$ is given by
\Eqn{
v_{S-2k} = \sum_{m+n=k+d} C_{m,n,k}^{M,N,S} x_{M-2m}\ox y_{N-2n},
}
where
\Eq{
C_{m,n,k}^{M,N,S}=\sum_{r=0}^k (-q)^{r+m-k}q^{(r+m-k)(M-m)}\binq{k\\r}\binq{r+d\\k-m+d}\binq{N-d\\r}\binq{S\\r}\inv c_0
}
for some constants $c_0:=c_0(M,N,S)$.
\end{Thm}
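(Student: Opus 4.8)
The plan is to assemble the three preceding lemmas by direct substitution, since each has already isolated one layer of the structure of the coefficients. Lemma \ref{CG-K} shows that $C_{m,n,k}^{M,N,S}$ is supported on the diagonal $m+n=k+d$ and reduces to the single-index quantity $C_m^k$ there; this is what forces the outer summation in the theorem to range over $m+n=k+d$ rather than over all pairs $(m,n)$. Lemma \ref{CG-E} then solves the $E$-recursion \eqref{Func-E} to express $C_m^k$ through the boundary constants $c_r:=C_{0,r+d}^r$ as in \eqref{CG-Eeq}. Finally, equation \eqref{cp2} solves the $F$-recursion \eqref{Func-F} at $m=0$ to pin down each $c_r$ as a $q$-binomial multiple of the single free scalar $c_0$.

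Given this, first I would substitute the closed form \eqref{cp2} for $c_r$ directly into \eqref{CG-Eeq}. This replaces $c_r$ by $\binq{N-d\\r}\binq{S\\r}\inv c_0$ inside the sum, and since $c_0$ is independent of $r$ it factors out, yielding the stated expression for $C_m^k$. On the diagonal $m+n=k+d$ one has $C_{m,n,k}^{M,N,S}=C_m^k$ by Lemma \ref{CG-K}, so this is precisely the formula claimed. I would note in passing that all the range restrictions are handled automatically: $\binq{k\\r}$ vanishes for $r<0$, $\binq{r+d\\k-m+d}$ vanishes for $r<k-m$, and $\binq{N-d\\r}$ vanishes once $r>N-d$, so the single summation $\sum_{r=0}^k$ may be written without qualification and correctly truncates to the admissible terms.

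The one point requiring genuine care --- and what I regard as the main obstacle --- is \emph{consistency}. Lemma \ref{CG-E} used only the $E$-intertwining equation, while \eqref{cp2} used the $F$-equation in the special case $m=0$. A priori it is not obvious that the resulting closed form for $C_m^k$ at \emph{all} $m$ satisfies the full $F$-recursion \eqref{Func-F}; one must check that the $E$-derived and $F$-derived descriptions are compatible on their overlap. I would resolve this by appealing to multiplicity-one: since $V_S$ occurs in $V_M\ox V_N$ with multiplicity one, the projection \eqref{CG} is an intertwiner unique up to the single overall scalar, so any family of coefficients simultaneously solving the $K$- and $E$-equations together with the $m=0$ slice of the $F$-equation must coincide with the genuine Clebsch-Gordan data, and hence automatically satisfies \eqref{Func-F} for all $m$. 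Alternatively, if one prefers a self-contained verification, one can substitute the final formula into \eqref{Func-F} and reduce the resulting identity to a $q$-Vandermonde--type summation, but the representation-theoretic argument makes this computation unnecessary.
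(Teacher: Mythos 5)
Your proposal is correct, but it resolves the one genuine difficulty by a different mechanism than the paper. Both you and the paper assemble Lemma \ref{CG-K}, Lemma \ref{CG-E} and \eqref{cp2} by direct substitution, and both identify the real issue: why a family constrained only by the $K$- and $E$-equations plus the $m=0$ slice of \eqref{Func-F} should satisfy the full $F$-recursion. The paper's route is to check, directly from the functional equations \eqref{Func-E} and \eqref{Func-F}, that the $k=0$ and $k=S$ vectors are genuine highest and lowest weight vectors, and then to invoke finite-dimensional representation theory: the submodule generated by such a vector is a copy of $V_S$, and the $E$-recursion propagates the identification of the $w_k$ with the standard basis of that copy, so $F$-intertwining is automatic. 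Your route instead invokes multiplicity-one: the three lemmas show that the solution space of the subsystem (the $K$- and $E$-equations together with the $m=0$ slice of the $F$-equation) is at most one-dimensional, parametrized by $c_0$, and the genuine intertwiner --- which exists and is nonzero by the standard decomposition of $V_M\ox V_N$ --- lies in this space, so every solution is a scalar multiple of the genuine Clebsch-Gordan data and hence intertwines $F$. This is valid and arguably more economical (no further computation is needed); the only point worth making explicit is that the genuine data cannot correspond to $c_0=0$, since the closed formula would then force it to vanish identically. As for what each approach buys: the paper's explicit identification of the highest weight vector is immediately reused for the canonical-basis normalization \eqref{crystal} in the corollary that follows, and the direct $q$-binomial verification of the $F$-equation that you dismiss as unnecessary is precisely Lemma \ref{intertwine-F} of the paper, which is not redundant there --- both your multiplicity-one argument and the paper's highest-weight argument collapse in the split real setting (no highest weight vector, no a priori decomposition), so that computational lemma is exactly what allows the formula to be transplanted to $\cU_{q\til{q}}(\sl(2,\R))$ in Theorem \ref{main}.
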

\begin{proof} We have shown directly that the coefficients $C_{m,n,k}^{M,N,S}$ intertwine the action of $E$ and $K$. By the finite dimensional representation theory, we just need to show that it provides the correct highest and lowest weight vector. Then the intertwining action of $F$ will be automatically satisfied.

For the highest weight vector, $k=0$. The condition that $|M-N|\leq S\leq M+N$ implies that the summation for $m+n=d$ ranges from $m=0$ to $m=d$. From the functional equation \eqref{Func-E}, the coefficients vanish identically after applying the action of $E$,
$$[m]_qC_{m,n-1}^k+q^{M-2m+2}[n]_qC_{m-1,n}^k=0,\tab m+n=d+1,$$
except for the initial case $m=0$ and the last case $n=0$, both of which has $[0]$ as the coefficient, hence the action vanishes.

Similarly for the lowest weight vector $k=S$, and the condition that $|M-N|\leq S\leq M+N$ implies the summation $m+n=S+d$ ranges from $m=M$ to $n=N$, hence the same argument on \eqref{Func-F} shows that the expression gives the correct lowest weight vector.
\end{proof}
It is known in the theory of tensor products of canonical (crystal) basis \cite{Kashi,Lu2} that, corresponding to the chosen coproduct \eqref{D(E)}, the highest weight vector of the component $V_S$ of $V_M\ox V_N$ can be normalized to the form
\Eq{\label{crystal}v_S = x_{M}\ox y_{N-2d}+ \sum_{m>0} P_m(q) x_{M-2m}\ox y_{N-2(d-m)}}
by setting $c_0:=C_{0,d}^0=1$, where $P_m(q):=C_{m,d-m}^0\in q\Z[q]$ is a polynomial in $q$ without leading constant. It turns out that this normalization is quite important as it implies that the split real version of the tensor product decomposition calculated in the next section is actually a unitary transformation.

\begin{Cor} Under the normalization \eqref{crystal}, the highest weight vector of the component $V_S$ of $V_M\ox V_N$ is given by
\Eq{
v_S &= \sum_{m=0}^d (-1)^{m}q^{m(M-m+1)}\binq{d\\m} x_{M-2m}\ox y_{N-2(d-m)}.}
\end{Cor}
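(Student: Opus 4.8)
The plan is to obtain the stated formula by a direct specialization of the general Clebsch-Gordan expression from Theorem~\ref{CG-full} to the highest weight vector, followed by imposing the crystal normalization. First I would set $k=0$ in
$$C_{m,n,k}^{M,N,S}=\sum_{r=0}^k (-q)^{r+m-k}q^{(r+m-k)(M-m)}\binq{k\\r}\binq{r+d\\k-m+d}\binq{N-d\\r}\binq{S\\r}\inv c_0,$$
and observe that the summation range $0\leq r\leq k$ collapses to the single term $r=0$, so that no genuine sum survives.

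Next I would evaluate each factor at $k=r=0$. The three $q$-binomials $\binq{0\\0}$, $\binq{N-d\\0}$ and $\binq{S\\0}\inv$ are all equal to $1$, while the remaining factor becomes $\binq{d\\d-m}$, which by the symmetry $\binq{d\\d-m}=\binq{d\\m}$ of the $q$-binomial coefficient yields the desired binomial. Collecting the powers of $q$ in the prefactor gives $(-q)^{m}q^{m(M-m)}=(-1)^m q^{m}q^{m(M-m)}=(-1)^m q^{m(M-m+1)}$. Recalling from Lemma~\ref{CG-K} that the highest weight component corresponds to $n=d-m$, and that the constraint $m+n=d$ together with $d\leq\min(M,N)$ forces $m$ to range over $0\leq m\leq d$, I would arrive at
$$v_S=\Big(\sum_{m=0}^d(-1)^m q^{m(M-m+1)}\binq{d\\m}\,x_{M-2m}\ox y_{N-2(d-m)}\Big)\cdot c_0.$$

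Finally I would impose the crystal normalization \eqref{crystal}, namely $c_0:=C_{0,d}^0=1$, to remove the overall constant and recover the claimed expression. There is no substantive obstacle here, since the corollary is a pure specialization of an already established formula; the only point worth verifying is that this normalization is consistent with \eqref{crystal}. I would confirm this by inspecting the leading coefficient: at $m=0$ the prefactor is $(-1)^0 q^0\binq{d\\0}=1$, so the coefficient of $x_M\ox y_{N-2d}$ equals exactly $1$, matching the canonical basis normalization. The remaining coefficients $(-1)^m q^{m(M-m+1)}\binq{d\\m}$ for $m>0$ then supply the polynomials $P_m(q)\in q\Z[q]$ appearing in \eqref{crystal}.
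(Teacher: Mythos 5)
Your proof is correct and matches the paper's (implicit) argument: the Corollary is stated as a direct specialization of Theorem \ref{CG-full} at $k=0$ with the crystal normalization $c_0=1$, which is exactly what you carry out, including the collapse of the $r$-sum to the $r=0$ term, the symmetry $\binq{d\\d-m}=\binq{d\\m}$, and the verification that the $m=0$ coefficient equals $1$ so the normalization \eqref{crystal} is satisfied. No gaps.
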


\begin{Rem}
In the above calculations, we can as well choose the recurrence relation on $n$ instead of $m$, or we can even start with lowest weight vector and consider the action of $F$ instead. We then obtain different expressions representing the same coefficients (after normalization), equivalent to certain Pfaff-Saalsch\"{u}tz type $q$-binomial identities. However, it turns out that not all of them can be used in the argument presented in Section \ref{sec:tensor} when we generalize to the split real case, since certain integral transformations cannot be evaluated into closed form expressions.
\end{Rem}

\subsection{F-intertwiners}\label{subsec:Finter}
In the split real case, the representation is not finite dimensional and we do not have a highest weight vector. Hence the argument in the previous subsection might not be enough to show the intertwining action of $F$. However, intrinsically it should be equivalent to certain functional equations of the $q$-binomials. Hence let us show this directly.

\begin{Lem}\label{intertwine-F} The expression \eqref{CG-Eeq} together with the functional equation \eqref{cp} implies that $C_{m,n}^k$ intertwines the action of $F$.
\end{Lem}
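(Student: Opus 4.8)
The plan is to verify the $F$-intertwining relation \eqref{Func-F} as a purely algebraic identity in $q$, without invoking the existence of a highest or lowest weight vector (which is what Theorem~\ref{CG-full} relied on, and which will be unavailable in the split real setting). Since the whole point of Lemma~\ref{intertwine-F} is to re-cast the $F$-intertwining as a functional equation of $q$-binomials that survives the passage to $|q|=1$, I would keep the argument at the level of the coefficients $C_m^k$ and the scalars $c_r$.

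First I would use Lemma~\ref{CG-K} to collapse the two index variables. Since $C_{m,n}^k = C_m^k\,\delta_{m+n,k+d}$, each of the three coefficients occurring in \eqref{Func-F} forces $m+n=k+1+d$; substituting $n=k+1+d-m$ turns \eqref{Func-F} into the single three-term recurrence
\[
[S-k]_q\,C_m^{k+1} = [N-k-d+m]_q\,C_m^k + q^{\,2(k+1+d-m)-N}[M-m+1]_q\,C_{m-1}^k .
\]
Next I would substitute the explicit expansion \eqref{CG-Eeq} for each of $C_m^{k+1}$, $C_m^k$ and $C_{m-1}^k$, so that every term becomes a sum over the auxiliary index $r$ of a $q$-power times the product $\binq{k\\r}\binq{r+d\\k-m+d}$ (with $k,m$ adjusted appropriately) times $c_r$. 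After reindexing the three sums onto a common summation variable, I would collect the total coefficient of each $c_r$. Because the $C_m^{k+1}$ sum runs up to $r=k+1$ whereas the other two stop at $r=k$, the ranges do not match, and this is exactly where \eqref{cp} enters: the relation $[S-r+1]_q\,c_r = [N-r-d+1]_q\,c_{r-1}$ lets me fold the stray top term (and reconcile $c_r$ against $c_{r-1}$) so that everything is expressed through a single family of scalars.

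The claim then reduces to checking that, for each $r$, the collected coefficient vanishes, and this is a single terminating $q$-binomial summation identity of $q$-Pfaff--Saalsch\"{u}tz / $q$-Chu--Vandermonde type (precisely the kind of identity flagged in the Remark following Theorem~\ref{CG-full}). I would verify it by clearing the $q$-binomials into $q$-factorials, extracting the common $q$-factorial ratios, and reducing the remainder to a balanced three-term relation in $q$-numbers after the Pascal-type arithmetic already used in Lemma~\ref{CG-E}.

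The hard part will be the last step: the bookkeeping of the weight factors $q^{(r+m-k)(M-m)}$ together with the two distinct $q$-binomials is delicate, and the boundary behaviour at $r=k,k+1$ and at the lower end of the summation must be tracked carefully so that no spurious terms survive the index shift. I expect the recurrence \eqref{cp} to be genuinely indispensable here—without it the three-term recurrence above would not close—and it is this same functional equation, with $[\,\cdot\,]_q$-factorials replaced by $S_b(Q+b\,\cdot\,)$ via Remark~\ref{replaceqd}, that will later carry the $F$-intertwining over to the split real case.
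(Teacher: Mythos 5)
Your proposal is correct and follows essentially the same route as the paper's own proof: substitute the expansion \eqref{CG-Eeq} into \eqref{Func-F}, collect the coefficient of each $c_r$, use \eqref{cp} — applied at every $r$, not only to the stray boundary term at $r=k+1$ — to trade $[N-d-r]_q\,c_r$ for $[S-r]_q\,c_{r+1}$, and close the resulting coefficient identity with the generalized Pascal identity $[S-r+1]_q\binq{k\\r-1}+[S-k-r]_q\binq{k\\r}=[S-k]_q\binq{k+1\\r}$. The only cosmetic differences are that the paper transforms the right-hand side of \eqref{Func-F} directly into $[S-k]_q C_{m,n}^{k+1}$ (via an intermediate rearrangement of $q$-numbers) rather than showing that a difference of collected coefficients vanishes, and the closing identity is of Pascal type rather than $q$-Pfaff--Saalsch\"{u}tz summation type.
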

\begin{proof}
We need to show \eqref{Func-F}. Starting from the right hand side, we have
\Eqn{
&[N-n+1]_q C_{m,n-1}^k+q^{2n-N}[M-m+1]_qC_{m-1,n}^k\\
=&[N-k-d+m]_q\sum_{r=0}^k (-q)^{r+m-k}q^{(r+m-k)(M-m)}\binq{k\\r}\binq{r+d\\k-m+d}c_r\\
&+q^{2(k+d-m+1)-N}[M-m+1]_q\sum_{r=0}^k (-q)^{r+m-k-1}q^{(r+m-k-1)(M-m+1)}\binq{k\\r}\binq{r+d\\k-m+d+1}c_r\\
=&\sum_{r=0}^k (-q)^{r+m-k}q^{(r+m-k)(M-m)}\binq{k\\r}\frac{[r+d]!}{[k-m+d+1]![r-k+m]!}\\
&\left([N-k-d+m]_q[k-m+d+1]_q-q^{k+r-S}[M-m+1]_q[r-k+m]_q\right)c_r.
}
Now using the identity (with $d=\frac{M+N-S}{2}$)
\Eq{
&[N-k-d+m]_q[k-m+d+1]_q-q^{k+r-S}[M-m+1]_q[r-k+m]_q\\
=&[N-d-r]_q[r+d+1]_q-q^{m-M-1}[S-k-r]_q[r-k+m]_q,
}
we have
\Eqn{
=&\sum_{r=0}^k (-q)^{r+m-k}q^{(r+m-k)(M-m)}\binq{k\\r}\frac{[r+d]!}{[k-m+d+1]![r-k+m]!}\\
&\left([N-d-r]_q[r+d+1]_q-q^{m-M-1}[S-k-r]_q[r-k+m]_q\right)c_r\\
=&\sum_{r=0}^k (-q)^{r+m-k}q^{(r+m-k)(M-m)}[N-d-r]_q\binq{k\\r}\binq{r+d+1\\k-m+d+1}c_r\\
&+\sum_{r=0}^k (-q)^{r+m-k-1}q^{(r+m-k-1)(M-m)}[S-k-r]_q\binq{k\\r}\binq{r+d\\k-m+d+1}c_r.
}
Now using \eqref{cp}, we rewrite $[N-d-r]c_r = [S-r]c_{r+1}$:
\Eqn{
=&\sum_{r=0}^k (-q)^{r+m-k}q^{(r+m-k)(M-m)}[S-r]_q\binq{k\\r}\binq{r+d+1\\k-m+d+1}c_{r+1}\\
&+\sum_{r=0}^k (-q)^{r+m-k-1}q^{(r+m-k-1)(M-m)}[S-k-r]_q\binq{k\\r}\binq{r+d\\k-m+d+1}c_r\\
=&\sum_{r=1}^{k+1} (-q)^{r+m-k-1}q^{(r+m-k-1)(M-m)}[S-r+1]_q\binq{k\\r-1}\binq{r+d\\k-m+d+1}c_{r}\\
&+\sum_{r=0}^k (-q)^{r+m-k-1}q^{(r+m-k-1)(M-m)}[S-k-r]_q\binq{k\\r}\binq{r+d\\k-m+d+1}c_r\\
=&\sum_{r=0}^{k+1} (-q)^{r+m-k-1}q^{(r+m-k-1)(M-m)}\binq{r+d\\k-m+d+1}\\
&\left([S-r+1]_q\binq{k\\r-1}+[S-k-r]_q\binq{k\\r}\right)c_r.
}
Finally using the generalized Pascal identity
\Eq{
[S-r+1]_q\binq{k\\r-1}+[S-k-r]_q\binq{k\\r}=[S-k]_q\binq{k+1\\r},
}
we arrive at
\Eqn{
&=[S-k]_q\sum_{r=0}^{k+1} (-q)^{r+m-k-1}q^{(r+m-k-1)(M-m)}\binq{k+1\\r}\binq{r+d\\k-m+d+1}c_r\\
&=[S-k]_qC_{m,n}^{k+1}
}
as required.
\end{proof}

\section{Positive representations $\cP_\l$ of $\cU_{q\til{q}}(\sl(2,\R))$}\label{sec:posres}
In \cite{FI, Ip2,Ip3}, a special class of irreducible representations for $\cU_{q\til{q}}(\g_\R)$, called the positive representations, is defined for the split real quantum groups corresponding to each simple Lie algebra $\g$. The generators of the quantum groups are realized by positive essentially self-adjoint operators acting on certain Hilbert space, and also satisfy the so-called \emph{transcendental relations}, relating the quantum group with its modular double counterpart. In this paper, we will just focus on the $\cU_{q\til{q}}(\sl(2,\R))$ case, which is studied extensively by Teschner \emph{et al.} from the work on quantum Liouville theory \cite{BT, NT, PT1, PT2}.
\subsection{Definitions}\label{subsec:def}
Let $x$ and $p=\frac{1}{2\pi i}\frac{d}{dx}$ be self-adjoint operators acting on $L^2(\R,dx)$ such that ${[p,x]=\frac{1}{2\pi i}}$. As before let $q=e^{\pi ib^2}$ where $b^2\in(0,1)\setminus\Q$.
\begin{Prop}\cite{Ip2}\label{canonicalsl2} Let $\l\geq 0$. The positive representation $\cP_\l$ of $\cU_{q\til{q}}(\sl(2,\R))$ acting on $L^2(\R$) is given by
\Eqn{
E&=\left(\frac{i}{q-q\inv}\right)(e^{\pi b(-x+\l-2p)}+e^{\pi b(x-\l-2p)})=\left[\frac{Q}{2b}+\frac{i}{b}(\l-x)\right]_qe^{-2\pi b p},\\
F&=\left(\frac{i}{q-q\inv}\right)(e^{\pi b(x+\l+2p)}+e^{\pi b(-x-\l+2p)})=\left[\frac{Q}{2b}+\frac{i}{b}(\l+x)\right]_qe^{2\pi b p},\\
K&=e^{-2\pi bx}.
}
Note that $\frac{i}{q-q\inv}\in \R_{>0}$, hence these are (unbounded) positive operators, and they are essentially self-adjoint.
\end{Prop}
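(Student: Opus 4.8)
The plan is to check directly that the three displayed operators satisfy the defining relations of $\cU_q(\sl_2)$, then to verify the analytic claims (positivity and essential self-adjointness), and finally to note the extension to the modular double. Since everything is built from the Heisenberg pair $x,p$ with central commutator $[x,p]=\frac{i}{2\pi}$, all the algebra reduces to Baker--Campbell--Hausdorff (Weyl) manipulations of exponentials.

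First I would reconcile the two expressions given for $E$ (and symmetrically for $F$). Writing $\pi b(-x+\l-2p)=\pi b(-x+\l)-2\pi bp$ and applying BCH, each of the two summands factors as a function of $x$ times $e^{-2\pi bp}$, picking up a scalar $q^{\pm\half}$ (here $q=e^{\pi ib^2}$). Collecting the two terms and using $Q=b+b\inv$ shows the sum equals $\left[\frac{Q}{2b}+\frac{i}{b}(\l-x)\right]_qe^{-2\pi bp}$, which is the claimed $q$-number form; the computation for $F$ is identical.

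The relations with $K=e^{-2\pi bx}$ come next. Here $KK\inv=1$ is immediate, and since $K$ commutes with every function of $x$ while the Weyl relation gives $e^{-2\pi bx}e^{\mp2\pi bp}e^{2\pi bx}=q^{\pm2}e^{\mp2\pi bp}$, conjugation by $K$ scales $E$ by $q^2$ and $F$ by $q^{-2}$, i.e.\ $KE=q^2EK$ and $KF=q^{-2}FK$. The heart of the argument is $[E,F]$: using the product forms I would expand $EF$ and $FE$ into four Weyl-exponential products each and evaluate every product by BCH. In each expansion two products have all $x$- and $p$-terms cancel, leaving the scalars $e^{\pm2\pi b\l}$ (these agree in $EF$ and $FE$, so they drop out of the commutator), while the remaining two collapse to $q\inv K,\,qK\inv$ in $EF$ and to $qK,\,q\inv K\inv$ in $FE$. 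This yields $[E,F]=\mu^2(q-q\inv)(K\inv-K)$ with $\mu=\frac{i}{q-q\inv}$, and since $\mu^2=-\frac{1}{(q-q\inv)^2}$ this is exactly $\frac{K-K\inv}{q-q\inv}$. The one thing to watch is the bookkeeping of the BCH phases: it is precisely the factor $i^2=-1$ inside $\mu^2$ that simultaneously fixes the sign and the normalization.

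For the analytic part, observe that $\frac{i}{q-q\inv}=\frac{1}{2\sin(\pi b^2)}>0$ for $b^2\in(0,1)$, so the prefactor is a positive real; $K$ is a function of the self-adjoint $x$ and is positive self-adjoint; and each exponent such as $\pi b(-x+\l-2p)$ is a real linear combination of $x$ and $p$, essentially self-adjoint on Schwartz space (being unitarily equivalent to a multiple of $x$), so its exponential is positive and self-adjoint. Thus $E$ and $F$ are positive real multiples of sums of two positive operators. The genuinely delicate point---and the main obstacle---is not the formal algebra but the \emph{essential self-adjointness of the sum} of two noncommuting positive exponentials of quantum-plane type, which requires the analytic theory of such operators rather than BCH and must be invoked from the literature \cite{BT, Ip2}. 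Finally, since the entire construction is symmetric under $b\mapsto b\inv$, the tilde generators obtained by this substitution satisfy the $\til q$-relations with $\til q=e^{\pi ib^{-2}}$ together with the transcendental relations, so the action genuinely extends to the modular double $\cU_{q\til{q}}(\sl(2,\R))$.
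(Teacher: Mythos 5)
Your proposal is correct, and it is worth noting that the paper itself offers no proof of this proposition at all: it is imported verbatim from \cite{Ip2}, so the only available comparison is with the original construction there, which likewise rests on direct verification of the Weyl-algebra relations plus the analytic theory of positive quantum-plane operators. Your algebraic computations check out in detail: the BCH factor $q^{\mp 1/2}$ on each summand correctly reconciles the exponential and $q$-number forms of $E$ (and $F$); conjugation by $K$ scales $e^{\mp 2\pi bp}$ by $q^{\pm 2}$ as you say; and in the expansion of $EF$ and $FE$ the mixed terms produce the scalars $e^{\pm 2\pi b\l}$ (which cancel in the commutator) while the remaining products give $q\inv K+qK\inv$ and $qK+q\inv K\inv$ respectively, so that $[E,F]=\mu^2(q-q\inv)(K\inv-K)=\frac{K-K\inv}{q-q\inv}$ with the sign fixed exactly by $\mu^2=-1/(q-q\inv)^2$. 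You also correctly isolate the genuinely non-formal point: essential self-adjointness (and positivity) of the \emph{sum} of two noncommuting positive Weyl exponentials cannot follow from BCH manipulations and must be invoked from the analytic theory. One remark that would strengthen this step: the two summands of $E$ satisfy $u_1u_2=q^2u_2u_1$, i.e.\ they form a positive quantum-plane pair, so the sum is unitarily equivalent, via the quantum dilogarithm transformations of Lemma \ref{unit2}, to a single Weyl exponential, which is precisely how the literature \cite{BT, Ip1} settles self-adjointness; citing this mechanism (rather than just the papers) makes the deferral concrete. Your closing observation that self-adjointness gives the star relations $E^*=E$, $F^*=F$, $K^*=K$, and that symmetry under $b\mapsto b\inv$ together with the transcendental relations \eqref{transdef} yields the modular-double statement, completes the picture; note only that the modular-double compatibility is stated separately in the paper (the proposition following this one), so it is not strictly part of what you needed to prove here.
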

\begin{Rem} This is Fourier transform of the representations given in \cite{BT,PT2} widely used in the physics literature. By Proposition \ref{real2finite} below, we see that our choice here is more natural in order to compare with the compact case.
\end{Rem}
\begin{Prop}
Let the rescaled generators be
\Eq{\be:=\left(\frac{i}{q-q\inv}\right)\inv E,\tab \bf:=\left(\frac{i}{q-q\inv}\right)\inv F.\label{smallef}}
Let the generators $\til{E}, \til{F}, \til{K}$ (as well as their rescaled version) be obtained by replacing $b$ with $b\inv$ in the representation of $E, F, K$. Then we have the transcendental relation
\Eq{\til{\be}:=\be^{\frac{1}{b^2}},\tab \til{\bf}:=\bf^{\frac{1}{b^2}},\tab \til{K}:=K^{\frac{1}{b^2}},\label{transdef}}
and the generators $E,F,K$ weakly commute with $\til{E},\til{F},\til{K}$, hence the above representation is consistent with the definition of the modular double \eqref{modulardouble}.
\end{Prop}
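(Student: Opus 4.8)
The plan is to treat the two assertions separately, after first putting the generators into convenient form. Using the rescaling \eqref{smallef}, Proposition \ref{canonicalsl2} gives $\be=u+v$ and $\bf=u'+v'$ with
\Eq{
u &= e^{\pi b(-x+\l-2p)}, \quad v = e^{\pi b(x-\l-2p)},\\
u' &= e^{\pi b(x+\l+2p)}, \quad v' = e^{\pi b(-x-\l+2p)},
}
and $K=e^{-2\pi bx}=e^{\pi b(-2x)}$. A Baker--Campbell--Hausdorff computation using $[p,x]=\frac{1}{2\pi i}$ shows $uv=q^2vu$ and $u'v'=q^2v'u'$, with $u,v,u',v'$ positive and essentially self-adjoint; thus $\be,\bf$ are positive $q^2$-commuting Weyl sums and $K$ is a single positive exponential. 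The decisive elementary observation is that the substitution $b\mapsto b\inv$ sends any exponential $e^{\pi b(ax+cp)}$ to $e^{\pi b\inv(ax+cp)}=\left(e^{\pi b(ax+cp)}\right)^{1/b^2}$, i.e.\ it is exactly the $\tfrac{1}{b^2}$-power on each factor. Hence $\til{K}=K^{1/b^2}$ is immediate, and $\til{u}=u^{1/b^2}$, $\til{v}=v^{1/b^2}$, and likewise for the primed pair.

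Given this, the transcendental relations \eqref{transdef} for $\be$ and $\bf$ reduce to the single operator identity
\Eq{
(u+v)^{1/b^2} = u^{1/b^2}+v^{1/b^2}
}
for a positive self-adjoint pair with $uv=q^2vu$ (the primed analogue being identical). To prove it I would exploit the reduction in Lemma \ref{unit2}: conjugating by a suitable $g_b(\,\cdot\,)$ collapses the $q^2$-commuting sum $u+v$ to a single exponential $e^{\pi bY}$, exactly as in the worked example $e^{2\pi b(x+p)}+e^{2\pi bp}\mapsto e^{2\pi bp}$. For a single exponential the fractional power is transparent, $\left(e^{\pi bY}\right)^{1/b^2}=e^{\pi b\inv Y}$, and one checks that the $b\mapsto b\inv$ replacement $\til{u}+\til{v}$ is carried by the same reduction to the same $e^{\pi b\inv Y}$; here the self-duality $G_b=G_{b\inv}$ (and $\zeta_b=\zeta_{b\inv}$) of the intertwining kernel is what forces the two sides to match. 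Alternatively, by the Stone--von Neumann theorem one may replace $(u,v)$ by the standard Weyl pair $(e^{2\pi b\hat x},e^{2\pi b\hat p})$ and invoke the known self-dual spectral decomposition of $e^{2\pi b\hat x}+e^{2\pi b\hat p}$.

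For the weak commutativity it suffices to show that each exponential appearing in $E,F,K$ commutes, in the Weyl sense, with each exponential appearing in $\til{E},\til{F},\til{K}$. Writing a generic untilded factor as $e^{\pi b(ax+cp)}$ and a tilded factor as $e^{\pi b\inv(a'x+c'p)}$, the prefactors multiply to $\pi^2$ and BCH gives
\Eq{
e^{\pi b(ax+cp)}e^{\pi b\inv(a'x+c'p)} = \exp\!\left(\tfrac{i\pi}{2}(ac'-ca')\right)e^{\pi b\inv(a'x+c'p)}e^{\pi b(ax+cp)},
}
the crucial point being that the phase is independent of $b$. The coefficient vectors $(a,c)$ occurring in $E,F,K$ are $(-1,-2)$, $(1,-2)$, $(1,2)$, $(-1,2)$ and $(-2,0)$; each is of type (odd, even) or (even, $0$), so for any two of them the determinant $ac'-ca'$ is a multiple of $4$. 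Consequently the phase $\exp(\tfrac{i\pi}{2}(ac'-ca'))$ equals $1$ for every mixed pair, and each untilded exponential commutes with each tilded one. Summing over the two summands of each generator gives that $E,F,K$ weakly commute with $\til{E},\til{F},\til{K}$, consistent with the tensor-product definition \eqref{modulardouble}.

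The main obstacle is the functional-analytic content hidden in $(u+v)^{1/b^2}=u^{1/b^2}+v^{1/b^2}$: one must justify essential self-adjointness of the $q^2$-Weyl sum, define its fractional power through the spectral calculus, and verify that the formal $g_b$-conjugation is a genuine unitary with matching domains, tracking carefully how the self-duality of $G_b$ interacts with the $b$-dependence of $g_b$. By comparison the weak-commutativity claim is essentially bookkeeping, though it too should be upgraded from the formal Weyl identity to a statement about commuting spectral projections of the unbounded generators on a common core.
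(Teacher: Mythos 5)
The paper itself states this Proposition without proof (it is recalled from \cite{Ip2} and the $\sl(2,\R)$ literature \cite{BT,PT2}), so your attempt can only be judged against the standard argument in those references --- and your outline is essentially that argument, and it is correct. The reduction of \eqref{transdef} to the single identity $(u+v)^{1/b^2}=u^{1/b^2}+v^{1/b^2}$ for a positive $q^2$-commuting pair, the collapse of $u+v$ to one exponential by conjugation with $g_b(qu v\inv)$ (the paper's worked example after Lemma \ref{unit2}), the observation that fractional powers of a single positive exponential are transparent under spectral calculus, and the use of self-duality $G_b=G_{b\inv}$, $\zeta_b=\zeta_{b\inv}$ to see that the \emph{same} unitary kernel performs the reduction for both $b$ and $b\inv$ simultaneously --- this is exactly Faddeev's mechanism as detailed in \cite{BT,Ip1}. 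Your Weyl-phase computation for weak commutativity, with phase $\exp(\tfrac{i\pi}{2}(ac'-ca'))$ independent of $b$, is also the standard and correct argument.

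Two corrections. First, your parity principle is false as stated: $(1,0)$ and $(1,2)$ are both of type (odd, even) yet have determinant $-2$. What saves you is that in this representation the odd entries are exactly $\pm1$ and the even entries exactly $\pm2$ or $0$, so all ten pairwise determinants lie in $\{0,\pm4\}$; you should simply compute them rather than invoke a parity rule. Second, and more importantly, your closing suggestion that weak commutativity ``should be upgraded'' to commuting spectral projections is not a gap in rigor to be filled --- it is impossible. For the unitary groups one has
\Eq{
e^{is\pi b(ax+cp)}\,e^{it\pi b\inv(a'x+c'p)}=e^{-\frac{i\pi st}{2}(ac'-ca')}\,e^{it\pi b\inv(a'x+c'p)}\,e^{is\pi b(ax+cp)},
}
and the phase is trivial only at the particular (analytically continued) parameter values giving the generators themselves, not for all real $s,t$; hence the spectral projections of the untilded and tilded generators do \emph{not} commute. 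This failure is precisely why the modular double literature, and the Proposition, say ``weakly commute'': the correct rigorous content is commutation on a common core of analytic vectors (e.g.\ the core $\cW$), which is what your phase computation establishes. Attempting the stronger statement would send you down a path where the proof genuinely fails.
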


In \cite{Ip5}, the notion of a virtual highest (resp. lowest) weight vector is introduced. These are generalized functions such that formally the generators $E$ (resp. $F$) and also their modular double counterpart acts as zero on these vectors, and have been found for positive representations of $\cU_{q\til{q}}(\g_\R)$ for every simple type $\g_\R$. In particular, for $\cU_{q\til{q}}(\sl(2,\R))$ the virtual highest weight vector is given formally by $v=\d(x-\frac{iQ}{2}-\l)$. More precisely it means (note the conjugation)
\Eq{
(E\cdot f)(-\frac{iQ}{2}+\l)=(\til{E}\cdot f)(-\frac{iQ}{2}+\l)=0
}
 for any $f\in L^2(\R)$ in the common domain of $E$ and $\til{E}$, which consists of $L^2(\R)$ functions that admits analytic continuation to the strip $|Im(x)|< Q$, and hence well-defined.

Therefore, let us write formally
\Eq{\label{replace}
v_n&:= \d(x+\frac{ibn}{2}),\\
N&=-\frac{Q}{b}+\frac{2i\l}{b},\label{replace2}
}
such that $v_N =\d(x+\frac{ibN}{2})= \d(x-\frac{iQ}{2}-\l)$ is our highest weight vector. Then upon rewriting the action from Proposition \ref{canonicalsl2}, it becomes formally: (again they should be interpreted as actions on distributions)
\begin{Prop}\label{real2finite} Using the substitution rule \eqref{replace}-\eqref{replace2}, we have
\Eq{
E\cdot v_{N-2n} &= -[n]_qv_{N-2n+2},\\
F\cdot v_{N-2n} &= -[N-n]_qv_{N-2n-2},\\
K\cdot v_{N-2n} &= q^{N-2n}v_{N-2n}.
}
\end{Prop}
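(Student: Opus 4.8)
The plan is to verify the three eigenvalue relations directly from the explicit operators of Proposition \ref{canonicalsl2}, treating each of $E$, $F$, $K$ as a composition of a shift operator and a multiplication operator and tracking their action on the formal distributions $v_{N-2n}=\d(x+\frac{ib(N-2n)}{2})$. Since the statement is purely formal -- an identity of distributions, to be paired later against analytic test functions -- the whole computation reduces to two elementary rules: how a shift moves the support of a delta, and how multiplication by a function evaluates at that support.

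First I would record the action of the shifts. Writing $p=\frac{1}{2\pi i}\frac{d}{dx}$, one has $e^{\mp 2\pi b p}=\exp(\pm ib\frac{d}{dx})$, which translates the variable by $x\mapsto x\pm ib$. Hence
\Eqn{
e^{-2\pi b p}\,\d\!\left(x+\tfrac{ib(N-2n)}{2}\right)&=\d\!\left(x+ib+\tfrac{ib(N-2n)}{2}\right)=\d\!\left(x+\tfrac{ib(N-2n+2)}{2}\right)=v_{N-2n+2},\\
e^{2\pi b p}\,\d\!\left(x+\tfrac{ib(N-2n)}{2}\right)&=\d\!\left(x-ib+\tfrac{ib(N-2n)}{2}\right)=v_{N-2n-2},
}
so $e^{-2\pi bp}$ raises and $e^{2\pi bp}$ lowers the index by $2$, exactly as a raising and lowering operator should.

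Next I would evaluate the multiplication factors, using that for a distribution supported at $x_0$ multiplication by a function $g$ yields $g(x_0)$ times that distribution. For $K=e^{-2\pi bx}$ the support is $x_0=-\frac{ib(N-2n)}{2}$ and $e^{-2\pi b x_0}=e^{\pi i b^2(N-2n)}=q^{N-2n}$, giving the third relation immediately. For $E=[\frac{Q}{2b}+\frac{i}{b}(\l-x)]_q\,e^{-2\pi bp}$ I would first apply the shift to land on $v_{N-2n+2}$ (support $x_0=-\frac{ib(N-2n+2)}{2}$), then evaluate the $q$-number factor there; substituting $N=-\frac{Q}{b}+\frac{2i\l}{b}$ collapses the argument to $\frac{Q}{b}+n-1$. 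The key arithmetic input is $q^{Q/b}=e^{\pi i bQ}=e^{\pi i(b^2+1)}=-q$, which gives the reduction
\Eqn{
\left[\tfrac{Q}{b}+m\right]_q=\frac{-q^{m+1}+q^{-(m+1)}}{q-q\inv}=-[m+1]_q .
}
With $m=n-1$ this produces $E\cdot v_{N-2n}=-[n]_q\,v_{N-2n+2}$. The computation for $F$ is identical in form: the shift sends $v_{N-2n}$ to $v_{N-2n-2}$, evaluating $[\frac{Q}{2b}+\frac{i}{b}(\l+x)]_q$ at its support together with the same substitution yields the argument $\frac{Q}{b}+N-n-1$, and the identity above with $m=N-n-1$ gives $-[N-n]_q$.

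The only genuine difficulty is conceptual rather than computational: giving rigorous meaning to $\d(x+\frac{ibn}{2})$ as a distribution at a \emph{complex} argument, and justifying that the shift and multiplication operators act on it by the naive rules used above. The correct framework, as indicated just before the statement, is to pair $v_{N-2n}$ against test functions in the common domain of $E,F$ and their modular duals -- namely $L^2(\R)$ functions admitting analytic continuation to the strip $|Im(x)|<Q$ -- with the conjugation $\frac{iQ}{2}\mapsto-\frac{iQ}{2}$ built into the pairing. Once this pairing is fixed, translating the support and evaluating multipliers at complex points are legitimate operations, and the algebra above carries through verbatim; I would make this precise and remark that the resulting relations reproduce the compact action \eqref{E}--\eqref{K} exactly, up to the global sign on $E$ and $F$ coming from $q^{Q/b}=-q$.
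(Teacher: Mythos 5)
Your proof is correct and takes essentially the same route as the paper's: apply the shift $e^{\mp 2\pi b p}$ to move the delta's support, evaluate the $q$-number multiplier at that support, substitute \eqref{replace2}, and use the sign identity to get the minus signs (your $q^{Q/b}=-q$ is just the paper's $q^{1/b^2}=-1$, i.e.\ $\left[\frac{1}{b^2}+z\right]_q=-[z]_q$, rewritten). The paper only writes out the $E$ case and declares $F$, $K$ similar, and your closing remarks on the distributional pairing agree with the formal framework the paper sets up just before the proposition.
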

\begin{proof} For the action of $E$ we have
\Eqn{
E\cdot v_{N-2n} &= \left[\frac{Q}{2b}+\frac{i}{b}(\l-x)\right]_q\d(x+ib+\frac{ib(N-2n)}{2})\\
&= \left[\frac{Q}{2b}+\frac{i}{b}(\frac{Q+Nb}{2i}+\frac{ib(N-2n+2)}{2})\right]_q\d(x+\frac{ib(N-2n+2)}{2})\\
&= \left[\frac{Q}{b}+n-1\right]_qv_{N-2n+2}\\
&= -[n]_qv_{N-2n+2}
}
where we used $q^{\frac{1}{b^2}}=e^{\pi i} = -1$ and
\Eq{\label{minus}\left[\frac{1}{b^2}+z\right]_q = \frac{q^{\frac{1}{b^2}+z}-q^{-\frac{1}{b^2}-z}}{q-q\inv}=\frac{-q^z+q^{-z}}{q-q\inv}=-[z]_q.}
The calculations for $F$ and $K$ are similar.
\end{proof}

Hence formally as distribution, the positive representation \emph{resembles} the finite dimensional representation $V_N$ of the compact quantum group $\cU_q(\sl_2)$ up to a sign! (However obviously they are not equivalent since $\cP_\l$ is infinite dimensional.) This has an important implication, since it turns out the functional equation needed for the intertwiner in the tensor product decomposition does not depend on the sign. Hence by Remark \ref{replaceqd} we only need to replace the $q$-factorials in the Clebsch-Gordan coefficients by the corresponding quantum dilogarithms with the parameters replaced by the real version according to \eqref{replace}-\eqref{replace2}.

\subsection{Intertwiners of tensor product decompositions}\label{subsec:intertwiner}
Let us now consider the split real version of the Clebsch-Gordan coefficients constructed for the compact quantum group in Section \ref{subsec:CG}. Consider the tensor product of two positive representations $\cP_{\l_1}\ox \cP_{\l_2}$. Our goal is to construct a transformation of the form
\Eq{
\cP_{\l_1}\ox \cP_{\l_2}\to \int_{\R_+}^\o+ \cP_\a d\mu(\a),
}
where  $\int_{\R_+}^\o+ \cP_\a d\mu(\a)$ is a Hilbert space equivalent to $L^2(\R_+\x \R,d\mu(\a)dz)$ for some measure $d\mu(\a)$, and $\cU_{q\til{q}}(\sl(2,\R))$ acts on each slice $\cP_\a$ separately. Note that since $\cP_\a\simeq \cP_{-\a}$ (cf. \cite{Ip2, PT1}), the direct integral ranges only over $\a\in\R_+$. Let us ignore the measure $d\mu(\a)$ for the moment. As in the compact case, we will work with the inverse map, i.e. we want an integral transformation $C$ of the form
\Eq{\label{realtensor}
C:\int_{\R_+}^\o+ \cP_\a d\mu(\b)&\to \cP_{\l_1}\ox \cP_{\l_2},\nonumber\\
f(\a,z)&\mapsto F(x,y)\nonumber\\
&:=\iint C(x,y,\a,z)f(\a,z)d\a dz.
}

Since we need to work with contour integration, we will define the above integral transformation over entire functions $\cW_\a\widehat{\ox}\cW_z$ that decrease rapidly, where 
\Eq{
\cW_z=\{ e^{-r z^2+s z}Poly(z)|r\in \R_{>0}, s\in \C\}
}
is the \emph{core} of the domain of the positive unbounded self-adjoint operators $E,F,K$ \cite{Ip1} and we take the $L^2$-completion of the tensor product. We will show later that this is indeed a unitary transformation, in particular it is bounded, hence the integral transformation can be extended to the whole Hilbert space $L^2(\R_+\x \R)$.

\begin{Rem}
To compare with the compact case, formally in terms of the ``continuous basis" $\d(x-s)\d(y-t)$ for $L^2(\R\x\R,dxdy)$ and $\d(z-u)$ on the slice $\cP_\a\simeq L^2(\R,dz)$, this is
\Eq{
\d(z-u)&\mapsto\iint C(x,y,\a,z)\d(z-u)dz\nonumber\\
&=C(x,y,\a,u)\nonumber\\
&=\iint C(s,t,\a,u)\d(x-s)\d(y-t)dsdt.
}
\end{Rem}

\begin{Lem} $C(x,y,\a,z)$ is of the form $C(x,y,\a,z)\d(x+y-z)$.
\end{Lem}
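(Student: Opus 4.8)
The plan is to imitate the proof of Lemma~\ref{CG-K}, where the Kronecker delta $\d_{m+n,k+d}$ arose purely from demanding that the intertwiner commute with $K$, the single generator that acts diagonally. In the split real setting this should again be the cleanest of the three intertwining conditions, because in the positive representation (Proposition~\ref{canonicalsl2}) the operator $K=e^{-2\pi bx}$ acts simply by multiplication, so no contour integral or shift is involved.

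First I would record the $K$-intertwining condition for the kernel $C$ in \eqref{realtensor}. Under the coproduct \eqref{D(K)}, $K$ acts on $\cP_{\l_1}\ox\cP_{\l_2}$ as $\D(K)=e^{-2\pi bx}\ox e^{-2\pi by}=e^{-2\pi b(x+y)}$, whereas on the slice $\cP_\a$ it acts as multiplication by $e^{-2\pi bz}$. The requirement that $C$ intertwine these two actions means $\D(K)\circ C=C\circ K$; computing $(\D(K)\cdot F)(x,y)=e^{-2\pi b(x+y)}\iint C f\,d\a\,dz$ and comparing with the image of $Kf=e^{-2\pi bz}f$, then equating integrands for all $f\in\cW_\a\widehat{\ox}\cW_z$, yields the distributional identity
\Eqn{
\left(e^{-2\pi b(x+y)}-e^{-2\pi bz}\right)C(x,y,\a,z)=0,
}
with $\a$ a spectator parameter. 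The modular double counterpart $\til K=e^{-2\pi b\inv x}$ produces the same vanishing locus $x+y=z$ and hence no further constraint.

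It then remains to extract the delta-function factor. Since $s\mapsto e^{-2\pi bs}$ is injective, the coefficient vanishes precisely on the smooth hyperplane $\{x+y=z\}$, where it has a simple zero (its gradient there is nonzero and transverse). Thus $C$ is supported on this hyperplane and takes the form $A(x,y,\a,z)\,\d(x+y-z)$. The one nontrivial point, which I expect to be the main obstacle, is ruling out transverse derivatives of the delta: a priori $C$ could be $\sum_{j}A_j\,\d^{(j)}(x+y-z)$. Writing $s=x+y-z$ and using $s\,\d^{(j)}(s)=-j\,\d^{(j-1)}(s)$ together with $e^{-2\pi b(x+y)}-e^{-2\pi bz}=-2\pi b\,e^{-2\pi bz}\,s+O(s^2)$, the product kills the top-order term unless its coefficient vanishes, and an induction on $j$ forces $A_j=0$ for every $j\geq1$. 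This leaves only the plain delta, as claimed; the residual freedom in $A$ (a function of $(x,y,\a)$ once $z=x+y$ is imposed) is exactly what the subsequent $E$- and $F$-intertwining relations---the split real analogues of Lemmas~\ref{CG-E} and \ref{intertwine-F}---will determine.
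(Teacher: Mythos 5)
Your proposal is correct and takes essentially the same route as the paper: the paper's proof is precisely the one-line observation that $K$ acts on $f(\a,z)$ by $e^{-2\pi b z}$ and on $F(x,y)$ by $e^{-2\pi b(x+y)}$, forcing the kernel to be supported on $x+y=z$. Your extra distributional bookkeeping (ruling out transverse derivatives $\d^{(j)}(x+y-z)$ via $s\,\d^{(j)}(s)=-j\,\d^{(j-1)}(s)$) merely makes rigorous what the paper leaves implicit, and is sound.
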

\begin{proof} It follows from the intertwining action of $K$ on both sides of \eqref{realtensor}: $K$ acts on $f(\b,z)$ by $e^{-2\pi b z}$ while it acts on $F(x,y)$ by $e^{-2\pi b(x+y)}$.
\end{proof}
Now let's look at the action of $E$ and $F$. Let us assume that $C(x,y,\b,z)$ lies in the domain of $e^{\pm 2\pi b p}$ for $p=p_x,p_y,p_z$, i.e. it has no poles inside the strip $|Im(x)|\leq b, |Im(y)|\leq b, |Im(z)|\leq b$.
\begin{Lem}
$C(x,y,\a,z)$ satisfies the functional equations
\Eq{
\left[\frac{Q}{2b}-\frac{i}{b}(z-ib-\a)\right]_q C(x,y,\a,z-ib)=&\left[\frac{Q}{2b}-\frac{i}{b}(x-\l_1)\right]_qC(x+ib,t,\a,z)\nonumber\\
&+e^{-2\pi b x}\left[\frac{Q}{2b}-\frac{i}{b}(y-\l_2)\right]_qC(x,y+ib,\a,z),\label{eqE}\\
\left[\frac{Q}{2b}+\frac{i}{b}(z+ib+\a)\right]_q C(x,y,\a,z+ib)=&\left[\frac{Q}{2b}+\frac{i}{b}(y+\l_2)\right]_qC(x,y-ib,\a,z)\nonumber\\\label{eqF}
&+e^{2\pi b y}\left[\frac{Q}{2b}+\frac{i}{b}(x+\l_1)\right]_qC(x-ib,t,\a,z).
}
\end{Lem}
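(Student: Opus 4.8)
The plan is to recognize that \eqref{eqE} and \eqref{eqF} are simply the two intertwining conditions $\D(E)\circ C=C\circ E$ and $\D(F)\circ C=C\circ F$ transcribed onto the integral kernel $C(x,y,\a,z)$ of \eqref{realtensor}: the left-hand operator acts on $\cP_{\l_1}\ox\cP_{\l_2}$ through the coproduct in the variables $x,y$, while the right-hand operator acts on the slice $\cP_\a$ through the single positive representation of weight $\a$ in the variable $z$. So the proof reduces to expanding both sides of $\D(E)[\iint C\,f\,]=\iint C\,(E_z f)$ for all test functions $f$ and comparing kernels; the case of $F$ is entirely parallel.

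For $E$ I would use the explicit form of the generators in Proposition \ref{canonicalsl2}, in which $E$ and $F$ are each a $q$-number multiplication operator composed with a shift $e^{\mp 2\pi b p}$, with $e^{-2\pi b p}$ sending $g(z)\mapsto g(z+ib)$ and $e^{2\pi b p}$ sending $g(z)\mapsto g(z-ib)$, and $K=e^{-2\pi b x}$. Applying $\D(E)=E\ox 1+K\ox E$ to $C$ in the variables $(x,y)$ produces exactly the right-hand side of \eqref{eqE}: from $E\ox 1$ the term $\left[\frac{Q}{2b}+\frac{i}{b}(\l_1-x)\right]_q C(x+ib,y,\a,z)$, and from $K\ox E$ the term $e^{-2\pi b x}\left[\frac{Q}{2b}+\frac{i}{b}(\l_2-y)\right]_q C(x,y+ib,\a,z)$. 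On the slice side I would move $E_z$ across the $z$-integration by the contour substitution $z\mapsto z-ib$; this replaces $C(x,y,\a,z)$ by $C(x,y,\a,z-ib)$ and the multiplier $\left[\frac{Q}{2b}+\frac{i}{b}(\a-z)\right]_q$ by its shift $\left[\frac{Q}{2b}+\frac{i}{b}(\a-z)-1\right]_q=\left[\frac{Q}{2b}-\frac{i}{b}(z-ib-\a)\right]_q$, which together form the left-hand side of \eqref{eqE}. Since the resulting identity holds after pairing with every $f$ in the dense core $\cW_\a\widehat{\ox}\cW_z$, the kernels agree and \eqref{eqE} follows.

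Equation \eqref{eqF} comes out of the identical computation with $\D(F)=1\ox F+F\ox K\inv$: the shifts now go the other way, $K\inv=e^{2\pi b y}$ supplies the prefactor $e^{2\pi b y}$, and transposing $F_z$ through $z\mapsto z+ib$ turns $\left[\frac{Q}{2b}+\frac{i}{b}(\a+z)\right]_q C(x,y,\a,z)$ into $\left[\frac{Q}{2b}+\frac{i}{b}(z+ib+\a)\right]_q C(x,y,\a,z+ib)$, the left-hand side of \eqref{eqF}.

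The only genuinely non-formal point, and the one I expect to be the main obstacle, is the justification of the contour substitutions $z\mapsto z\mp ib$ (and the matching shifts $x\mapsto x+ib$, $y\mapsto y+ib$ on the tensor-product side) that move the operators $e^{\mp 2\pi b p}$ onto $C$. One must check that sliding the contour across the strip crosses none of the poles of $C$ and that the Gaussian decay built into $\cW_z$ annihilates the boundary contributions, so that no residue terms appear. This is exactly what the standing hypothesis provides, namely that $C$ lies in the domain of $e^{\pm 2\pi b p}$, equivalently that it has no poles in the strips $|Im(x)|\le b$, $|Im(y)|\le b$, $|Im(z)|\le b$. Once these analytic points are secured the remainder is the $q$-number bookkeeping indicated above (and one sees in passing that the symbol $t$ in \eqref{eqE}--\eqref{eqF} should read $y$).
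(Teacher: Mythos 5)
Your proposal is correct and follows essentially the same route as the paper: the paper likewise derives \eqref{eqE}--\eqref{eqF} by writing the intertwining conditions $EC=CE$, $FC=CF$ on both sides of \eqref{realtensor} and shifting the contours by $\pm ib$, justified by the standing assumption that $C$ lies in the domain of $e^{\pm 2\pi b p}$ for $p=p_x,p_y,p_z$. Your version merely spells out the $q$-number bookkeeping that the paper leaves implicit (and you are right that the $t$ in the statement is a typo for $y$).
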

\begin{proof} It follows by considering the action of $E$ and $F$ on both sides of \eqref{realtensor}, i.e. by comparing the integral transformation $EC=CE$ and $FC=CF$. Since we assume $C(x,y,\b,z)$ lies in the domain of $e^{\pm 2\pi b p}$ for $p=p_x,p_y,p_z$, we can shift the contour of integration by $\pm ib$ and arrive at the functional equations.
\end{proof}

Now comes the important observation. We have seen that by formally defining the virtual highest weight vector, i.e. the substitution of \eqref{replace}-\eqref{replace2}, we recover the compact representations. Hence let us rewrite the above using (Note: they are not integers anymore)
\Eq{\label{replace3}
M:=-\frac{Q}{b}+\frac{2i\l_1}{b}, &&N&:=-\frac{Q}{2}+\frac{2i\l_2}{b}, &S&:=-\frac{Q}{b}+\frac{2i\a}{b},\nonumber\\
x=-\frac{ib(M-2m)}{2},&&y&:=-\frac{ib(N-2n)}{2}, &z&:=-\frac{ib(S-2k)}{2}.
}
Then the functional equations \eqref{eqE}-\eqref{eqF} become
\Eq{
[k]_qC'(m,n,S,k-1)&=[m+1]_qC'(m+1,n,S,k)+q^{M-2m}[n+1]_qC'(m,n+1,S,k),\\
[S-k]_qC'(m,n,S,k+1)&=[N-n+1]_qC'(m,n-1,S,k)+q^{2n-N}[M-m+1]_qC'(m-1,n,S,k),
}
where 
\Eq{
C'(m,n,S,k):=C\left(-\frac{ib(M-2m)}{2},-\frac{ib(N-2n)}{2},\frac{Sb+Q}{2i},-\frac{ib(S-2k)}{2}\right)
}
and we have used \eqref{minus}.

Now this is exactly the functional equations \eqref{Func-E} and \eqref{Func-F}! Therefore by Proposition \ref{CG-full} and Remark \ref{replaceqd}, we have the following
\begin{Thm}\label{main}
$C(x,y,\a,z)$ is given by
\Eq{\label{CG-real}
C(x,y,\a,z)=&\d(x+y-z)\frac{S_b(\frac{Q}{2}+i\a-iz)S_b(\frac{Q}{2}+i\a-i\l_1+i\l_2)}{S_b(\frac{Q}{2}-iy+i\l_2)S_b(2i\a)}\cdot \\
&\int_\R  \frac{e^{\pi(\frac{Q}{2}+ix+i\l_1)(r+\a-\l_1-y)}S_b(i\a-iy-i\l_1+ir)S_b(ir)S_b(2i\a+ir)}{S_b(\frac{Q}{2}+i\a-iz+ir)S_b(\frac{Q}{2}+i\a-i\l_1-i\l_2+ir)S_b(\frac{Q}{2}+i\a-i\l_1+i\l_2+ir)}dr,\nonumber
}
where the contour of integration follows our convention given in Notation \ref{contour}. This intertwines the action of $E,F$ and $K$ for the tensor product decomposition.
\end{Thm}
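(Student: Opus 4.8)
The plan is to verify that the explicit kernel \eqref{CG-real} satisfies the three intertwining conditions; the $K$-condition is already settled by the preceding lemma, which forces the factor $\d(x+y-z)$, so it remains to check the two first-order difference equations \eqref{eqE} and \eqref{eqF}. The guiding principle, following Remark \ref{replaceqd}, is that under the substitution \eqref{replace3} these equations are formally identical to the compact recursions \eqref{Func-E} and \eqref{Func-F}, which are solved by the Clebsch--Gordan coefficients of Theorem \ref{CG-full}. First I would obtain \eqref{CG-real} itself by writing the coefficient of Theorem \ref{CG-full} as a ratio of $q$-factorials, replacing each $[n]!$ by $S_b(Q+bn)$, substituting the virtual weights from \eqref{replace3}, and turning the finite sum over $r$ into a contour integral; the prefactors $(-q)^{\bullet}q^{\bullet}$ collapse into the exponential kernel $e^{\pi(\cdots)}$ and the normalization $c_0$ is absorbed into the $S_b$-prefactor. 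By \eqref{Sb(Q+bx)} the spurious factors $i(q-q\inv)$ attached to each $[\cdot]_q$ cancel in every ratio, so that the ratios of $S_b$ reproduce precisely the $q$-number coefficients demanded by \eqref{eqE}--\eqref{eqF}.

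To verify \eqref{eqE} I would apply the functional equation \eqref{funceq} to each of the shifts $z\mapsto z-ib$, $x\mapsto x+ib$, $y\mapsto y+ib$, thereby extracting elementary sinh-type factors from the shifted $S_b$ arguments, and then realign the three resulting integrands by a shift of the $r$-contour by $ib$. Because the kernel of Theorem \ref{CG-full} was built from \eqref{CG-Eeq} precisely so as to solve \eqref{Func-E} via the Pascal identity, and since that identity is equivalent to the elementary relation $q^{t-r}[r+1]_q+q^{-r-1}[t-r]_q=[t+1]_q$, which is an identity of exponentials valid for all complex arguments, the difference of the two sides of \eqref{eqE} becomes the integral over $\R$ of a total $ib$-difference $g(r)-g(r+ib)$. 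This integral vanishes after shifting the contour, the shift being legitimate by the decay supplied by the asymptotics \eqref{asymp} and the pole placement fixed in Notation \ref{contour}.

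The verification of \eqref{eqF} is the split real counterpart of Lemma \ref{intertwine-F}, and I expect it to be the main obstacle. Here I would transport the compact computation line by line: the recursion \eqref{cp} for the constants $c_r$, namely $[N-d-r]_q c_r=[S-r]_q c_{r+1}$, becomes a functional relation among the $r$-dependent $S_b$ factors under $r\mapsto r\pm ib$; the re-indexings $r\mapsto r\pm 1$ of the finite sum become contour shifts $r\mapsto r\pm ib$; and the generalized Pascal identity $[S-r+1]_q\binq{k\\r-1}+[S-k-r]_q\binq{k\\r}=[S-k]_q\binq{k+1\\r}$ becomes an identity among the $S_b$-binomials. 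As before, each of these reduces to a $q$-number identity that holds verbatim for the now complex, non-integral virtual weights $M,N,S$.

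The delicate point throughout is the contour bookkeeping. In the compact proof the re-indexings were harmless because out-of-range $q$-binomials vanish, so no boundary terms survive; in the integral version the same steps are contour translations by $\pm ib$, and I must check both that the integrand decays as $r\to\pm\infty$, so that no contribution is picked up at infinity, and that no pole of the $S_b$ factors is crossed during a shift (or that any residues collected cancel between the two sides), consistently with Notation \ref{contour}. Establishing convergence of \eqref{CG-real} and the admissibility of all these shifts via \eqref{asymp} --- rather than the algebraic identities, which are automatic --- is where the real work lies; the unitarity of the transformation is a separate matter deferred to the next section.
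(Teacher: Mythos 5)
Your proposal is correct and follows essentially the same route as the paper's proof: obtain the kernel from Theorem \ref{CG-full} via the replacement $[n]!\to S_b(Q+bn)$ and the substitution \eqref{replace3}, observe that the split-real equations \eqref{eqE}--\eqref{eqF} coincide formally with the compact recursions \eqref{Func-E} and \eqref{Func-F} so that the $q$-identities (valid verbatim for complex arguments) do the algebraic work, and reduce everything else to contour bookkeeping via Notation \ref{contour} and the asymptotics \eqref{asymp}. One refinement worth noting: by Lemma \ref{CG-E} the $E$-intertwining holds \emph{termwise} in $r$ (the integrand satisfies \eqref{eqE} for each fixed $r$, the coefficient of the arbitrary weight $c(r)$ matching on both sides), so no $r$-contour shift or telescoping is needed there; the shift of $\int dr$ by $ib$ enters only in the $F$-verification (the transported Lemma \ref{intertwine-F}, with \eqref{cp} solved by $c(r)\propto S_b(2i\a+ir)/S_b(\frac{Q}{2}+i\a-i\l_1+i\l_2+ir)$), which is precisely why the paper places the $r$-contour under the real-line poles inside the strip $-\frac{Q}{2}<Im(r)<0$.
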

\begin{proof}
By Lemma \ref{CG-E}, and the substitution $[n]!\to S_b(Q+bn)$, for any $p\in\C$ any function of the form
\Eqn{
&(-q)^{r+m-k}q^{(r+m-k)(M-m)}\frac{S_b(Q+bk)}{S_b(Q+br)S_b(Q+bk-br)}\frac{S_b(Q+br+bd)}{S_b(Q+bk-bm+bd)S_b(Q+br-bk+bm)}
}
intertwines the action of $E$. Rewrite using the substitution rule we have
\Eqn{
=&\frac{e^{\pi(\frac{Q}{2}+ix+i\l_1)(r+\a-\l_1-y)}S_b(\frac{Q}{2}+i\a-iz)S_b(\frac{Q}{2}-i\a+i\l_1+i\l_2-ir)}{S_b(Q-ir)S_b(\frac{Q}{2}+i\a-iz+ir)S_b(\frac{Q}{2}-iy+i\l_2)S_b(Q+iy-i\a+i\l_1-ir)}\\
=&\frac{e^{\pi(\frac{Q}{2}+ix+i\l_1)(r+\a-\l_1-y)}S_b(\frac{Q}{2}+i\a-iz)S_b(i\a-iy-i\l_1+ir)S_b(ir)}{S_b(\frac{Q}{2}+i\a-iz+ir)S_b(\frac{Q}{2}-iy+i\l_2)S_b(\frac{Q}{2}+i\a-i\l_1-i\l_2+ir)}.
}
Here following the idea in the setting of multiplier Hopf algebra \cite{Ip7}, we have replaced $(-q)$ with $e^{\pi i b Q}$ and rescaled ${r\mapsto \frac{r}{ib}}$. Also we used $x+y=z$ and in the last line we used the reflection properties \eqref{reflection} of $S_b$.

Hence by Lemma \ref{intertwine-F} the general form for $C(x,y,\a,z)$ is given by
\Eq{\label{E-inter}
&\d(x+y-z)\int  \frac{e^{\pi(\frac{Q}{2}+ix+i\l_1)(r+\a-\l_1-y)}S_b(\frac{Q}{2}+i\a-iz)S_b(i\a-iy-i\l_1+ir)S_b(ir)}{S_b(\frac{Q}{2}+i\a-iz+ir)S_b(\frac{Q}{2}-iy+i\l_2)S_b(\frac{Q}{2}+i\a-i\l_1-i\l_2+ir)}c(r)dr
}
for some function $c(r)$ that satisfies the functional equation corresponding to \eqref{cp}:
\Eqn{
c(r)&=\frac{[N-\frac{r}{ib}-d+1]_q}{[S-\frac{r}{ib}+1]_q}c(r-ib)\\
&=\frac{\left[\frac{Q}{2b}+\frac{i}{b}(\a-\l_1+\l_2+r)\right]}{\left[\frac{i}{b}(r+2\a)\right]}c(r-ib).
}
Using the expression from \eqref{cp2}, similarly this can be solved by
\Eqn{
c(r) &= \frac{S_b(Q+bN-bd)}{S_b(Q+bN-bd-br)}\frac{S_b(Q+bS-br)}{S_b(Q+bS)}\\
&=\frac{S_b(\frac{Q}{2}+i\a-i\l_1+i\l_2)}{S_b(\frac{Q}{2}+i\a-i\l_1+i\l_2+ir)}\frac{S_b(2i\a+ir)}{S_b(2i\a)},
}
and hence we obtain the expression for $C(x,y,\a,z)$. Now we have to take care of the shifts in contour. From the analytic properties of $S_b$, we see that the expression allows the shift in $x, y$ and $z$ by $+ib$ only if $-\frac{Q}{2}< Im(r)<0$. On the other hand, we can shift $r\to r+x, r+y$ or $r+z$, and we see that it also allows the shift in all variables by $-ib$. Hence we will choose the contour of $r$ to go parallel along this strip and $C(x,y,\a,z)$ lies in the domain of $e^{\pm 2\pi b p}$ for $p=p_x,p_y,p_z$.

Finally according to the proof of Lemma \ref{intertwine-F}, in order for $C(x,y,\a,z)$ to intertwine the action of $F$, we need to be able to shift the contour of integration $\int dr$ by $ib$. Hence by the analytic properties of $S_b$, we will choose the contour of integration to go under the poles on the real line. By the asymptotic properties \eqref{asymp} the integral is absolutely convergent.
\end{proof}

The intertwiner $C(x,y,\a,z)$ above is obtained purely from the expression of the finite dimensional tensor product decomposition of the compact quantum group $\cU_q(\sl_2)$. In the next section, we will show that it equals (up to constant) a canonical unitary transformation which diagonalize the Casimir operator. In particular it shows that the integral transformation defined above is actually equivalent to a unitary transformation $C:\int_{\R_+}^\o+ \cP_\a d\mu(\a)\to \cP_{\l_1}\ox \cP_{\l_2}$.

\section{Tensor product decompositions of $\cP_{\l_1}\ox \cP_{\l_2}$}\label{sec:tensor}
In this section, we will study the tensor product decomposition by a series of unitary transformations given by quantum dilogarithms. An equivalent form of the transformation is given in \cite{NT}. Let us consider the rescaled operator $\be,\bf, K$ instead (cf. \eqref{smallef}), and define the Casimir operator by
\Eq{
Q&=FE+\frac{qK+q\inv K\inv}{(q-q\inv)^2},\\
\bQ&:=\left(\frac{i}{q-q\inv}\right)^{-2}Q = \bf\be-qK-q\inv K\inv,
}
such that $\bQ$ commutes with $E,F,$ and $K$ and acts on $\cP_\l$ as the scalar $e^{2\pi b\l}+e^{-2\pi b\l}$. 

\begin{Thm}\label{sl2trans}
The exists a unitary transformation 
\Eq{\Phi:\cP_{\l_1}\ox \cP_{\l_2} \to \int_{\R_+}^{\o+} \cP_\a d\mu(\a),} which can be decomposed into
\Eq{\label{Phidecomp}
\Phi = \left(\int_{\R_+}^\o+\cS_{\a}\inv d\mu(\a)\right) \circ (1\ox \cK)\circ (1\ox\cR)\circ \cT\circ (\cS_{\l_1}\ox \cS_{\l_2}),
}
and intertwines the action of $E,F$ and $K$. Here the intertwiners act as
\Eq{
\cS_{\l}: &\be=e^{\pi b(x-\l-2p_x)}+e^{\pi b(-x+\l-2p_x)}\to e^{-2\pi b p_x},\\
\cT:&\D(\be)\to 1\ox \be,\tab \D(K)\to 1\ox K,\\
\cR:&\D(\bQ)\to e^{2\pi b x}+e^{-2\pi b x}+e^{2\pi b p_x},
}
and $\cK$ is the Kashaev's transformation \cite{Ka} which gives the spectral decomposition of the ``Dehn twist" operator
\Eq{
\cK: e^{2\pi b x}+e^{-2\pi bx}+e^{2\pi b p_x} \to \int_{\R_+}^{\o+}(e^{2\pi b \a}+e^{-2\pi b \a}) d\mu(\a),
}
where the measure is given by 
\Eq{d\mu(\a) = |S_b(Q+2i\a)|^2d\a = 4\sinh(2\pi b \a)\sinh(2\pi b\inv \a)d\a.} Finally the last map means acting on each slice $\cP_\a$ by $\cS_\a\inv$.
\end{Thm}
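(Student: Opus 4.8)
The plan is to verify the decomposition \eqref{Phidecomp} factor by factor and to assemble both unitarity and the intertwining property from the elementary rules of Section~\ref{subsec:unitary}. The common thread is that each of $\cS_\l$, $\cT$, $\cR$, and $\cK$ is a finite product of the elementary unitaries of Lemma~\ref{unit1} --- multiplication by the phases $e^{\pm\pi i x^2}$, $e^{\pm 2\pi i xy}$, $e^{\pm 2\pi i p_x y}$ --- and of quantum dilogarithm conjugations $g_b(u)$ with $u$ a positive self-adjoint monomial in the Weyl generators, each of which is unitary by \eqref{gb1}. Granting this, $\Phi$ is a composition of unitaries, and the direct-integral factor $\int_{\R_+}^{\o+}\cS_\a\inv\,d\mu(\a)$ is handled slicewise. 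It then remains to (i) realize each map explicitly, (ii) confirm its stated action on the generators via the conjugation rules of Lemmas~\ref{unit1} and~\ref{unit2}, and (iii) track $E,F,K$ through the chain.

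First I would construct $\cS_\l$. The rescaled generator $\be=e^{\pi b(-x+\l-2p)}+e^{\pi b(x-\l-2p)}$ is a two-term sum of exponentials, and the worked example following Lemma~\ref{unit2} shows that conjugation by a single $g_b$ (preceded by a Gaussian/exponential shift absorbing $\l$) collapses exactly such a sum to the single momentum exponential $e^{-2\pi b p_x}$; this is the quantum exponential relation $g_b(u)g_b(v)=g_b(u+v)$ read as a conjugation. Applying $\cS_{\l_1}\ox\cS_{\l_2}$ thus reduces both copies of $\be$ to momentum operators. The quantum mutation operator $\cT$ comes next: with both $\be$'s simplified one computes $\D(\be)$ and $\D(K)$ from \eqref{D(E)} and \eqref{D(K)} and finds that a further product of quantum exponential and pentagon conjugations (Lemma~\ref{unit2}) sends $\D(\be)\mapsto 1\ox\be$ and $\D(K)\mapsto 1\ox K$, as stated. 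After this step the $E$- and $K$-action is realized irreducibly by a Weyl pair on a single tensor factor, which becomes the eventual slice $\cP_\a$.

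Because $\bQ$ commutes with $E,F,K$, its image under $\cT$ commutes with this irreducible Weyl pair and, by Stone--von Neumann uniqueness, localizes on the complementary (Casimir) tensor factor. The $\cR$-step of \eqref{Phidecomp} conjugates this operator into the self-adjoint ``Dehn twist'' operator $L:=e^{2\pi b x}+e^{-2\pi b x}+e^{2\pi b p_x}$, again by a product of dilogarithm conjugations and Gaussian phases. Kashaev's transformation $\cK$ then supplies the spectral resolution of $L$, realizing the complementary factor as $\int_{\R_+}^{\o+}(e^{2\pi b\a}+e^{-2\pi b\a})\,d\mu(\a)$ with Plancherel measure $d\mu(\a)=|S_b(Q+2i\a)|^2\,d\a$, so that $\D(\bQ)$ acts on the $\a$-slice by the scalar $e^{2\pi b\a}+e^{-2\pi b\a}$. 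Finally $\int_{\R_+}^{\o+}\cS_\a\inv\,d\mu(\a)$ undoes the initial simplification slicewise, restoring the standard form of $\cP_\a$. Intertwining of $E$ and $K$ now holds on each slice by construction, and intertwining of $F$ follows automatically from the Casimir relation $\bf\be=\bQ+qK+q\inv K\inv$: since $\bQ$, $\be$, $K$ are already intertwined and $\be$ is invertible on the common core, so is $\bf=(\bQ+qK+q\inv K\inv)\be\inv$.

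\textbf{Main obstacle.} The hard part is the Kashaev step $\cK$. Establishing its unitarity is equivalent to a genuine Plancherel theorem for the unbounded self-adjoint operator $L$ on $L^2(\R)$: one must exhibit the generalized eigenfunctions of $L$ (non-compact quantum dilogarithm analogues of Bessel--Macdonald functions), prove their completeness and orthogonality, and thereby pin down both the continuous spectrum parametrized by $\a\in\R_+$ and the exact measure $|S_b(Q+2i\a)|^2\,d\a$. The orthogonality and completeness relations reduce to the integral identities of Lemmas~\ref{tau} and~\ref{45}, and it is the careful evaluation of these contour integrals --- together with the self-adjointness and essential-spectrum analysis of $L$ --- that forms the technical core of the argument; the remaining verifications are bookkeeping with the conjugation formulas of Lemmas~\ref{unit1} and~\ref{unit2}.
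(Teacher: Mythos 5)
Your proposal follows essentially the same route as the paper: the same factor-by-factor verification using the conjugation rules of Lemmas \ref{unit1} and \ref{unit2}, the same reduction $\bf=(\bQ+qK+q\inv K\inv)\be\inv$ so that only $\be$, $K$, $\bQ$ need be tracked, and the same observation that $\D(\bQ)$ localizes on the Casimir factor because it commutes with an irreducible Weyl pair. The only divergence is minor: the paper does not re-prove the Plancherel theorem for the Dehn-twist operator (your ``main obstacle'') but simply cites Kashaev \cite{Ka} for the completeness and orthogonality of the eigendistributions $S_b(ix-i\a)S_b(ix+i\a)$, which also corrects your opening overstatement that $\cK$ is a finite product of elementary unitaries---it is an integral transform whose unitarity is precisely that cited result.
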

\begin{proof} Let us treat in detail the effects of each transformation to the actions of the generators and their coproducts. Note that we can rewrite formally
\Eq{
\bf = (\bQ+qK+q\inv K\inv)\be\inv.
}
Hence we only need to keep track of the action of $K,\be$ and $\bQ$ only. We will be using repeatedly the unitary transformations given in Section \ref{subsec:unitary}. Also since the action is intertwined by conjugation, we will omit constants by writing $X\simeq Y$ if $X=cY$ for some constants $c\in\C$ with $|c|=1$.

First of all the \emph{simplification transformation} $\cS_\l$ on $L^2(\R,dx)$ is given by the multiplication operator
\Eqn{
\cS_\l &=S_b(\frac{Q}{2}-ix+i\l)\\
&\simeq e^{-\frac{\pi i}{2}x(x-2\l)}g_b(e^{2\pi b (-x+\l)})\\
&=(2p_x\mapsto 2p_x+x-\l)\circ g_b(e^{2\pi b (-x+\l)}),
}
which keeps $K, \bQ$ invariant and transform
\Eqn{
\be =&e^{\pi b(x-\l-2p_x)}+e^{\pi b(-x+\l-2p_x)}\to e^{-2\pi b p_x},\\
\bf =&e^{\pi b(x+\l+2p_x)}+e^{\pi b(-x-\l+2p_x)}\to e^{2\pi b(x+p_x)}+e^{2\pi b(-x+p_x)}+e^{2\pi b(\l+p_x)}+e^{2\pi b(-\l+p_x)}.
}
Next the \emph{Borel part decomposition operator} $\cT$ is studied in \cite{Ip4} for any split real quantum groups $\cU_{q\til{q}}(\g_\R)$. In the simplest case of $\cU_{q\til{q}}(\sl(2,\R))$ it is studied in \cite{FK} as the \emph{quantum mutation operator} between representations of the quantum plane, and is given by
\footnote{We used a different coproduct here but the transformation is equivalent up to multiplication by $e^{2\pi i xy}$.}
$$\cT = (y\to y-x)\circ e^{2\pi i x y}g_b^*(e^{2\pi b(x-p_x+p_y)}),$$
which transforms
\Eqn{
\D(\be) = \be\ox 1 +K\ox \be &= e^{-2\pi b p_x}+e^{-2\pi b x-2\pi bp_y}\\
&\to e^{-2\pi bp_y}=1\ox \be,\\
\D(K)=K\ox K &=e^{-2\pi bx-2\pi b y} \\
&\to e^{-2\pi b y}=1\ox K,
}
and
\Eqn{
\D(\bf)=&1 \ox \bf+\bf\ox K\inv \\
=&e^{2\pi b(y+p_y)}+e^{2\pi b(-y+p_y)}+e^{2\pi b(\l_2+p_y)}+e^{2\pi b(-\l_2+p_y)}\\
&+e^{2\pi b(y+x+p_x)}+e^{2\pi b(y-x+p_x)}+e^{2\pi b(y+\l_1+p_x)}+e^{2\pi b(y-\l_1+p_x)}\\
\to&e^{2\pi b(-y+p_y)}+e^{2\pi b(-x-p_x+p_y)}+e^{2\pi b(\l_2-x+p_y)}+e^{2\pi b(-\l_2-x+p_y)}\\
&+e^{2\pi b(x+p_x+p_y)}+e^{2\pi b(y+p_y)}+e^{2\pi b(-x+p_x+p_y)}+e^{2\pi b(\l_1+p_x+p_y)}+e^{2\pi b(-\l_1+p_x+p_y)},
}
so that $\D(\bQ)$ becomes
\Eqn{
&\D(\bQ)=\D(\bf)\D(\be)-q\D(K)-q\inv \D(K\inv)\\
&\to e^{2\pi b(\l_2-x)}+e^{2\pi b(-\l_2-x)}+e^{2\pi b(x+p_x)}+e^{2\pi b(-x-p_x)}+e^{2\pi b(-x+p_x)}+e^{2\pi b(\l_1+p_x)}+e^{2\pi b(-\l_1+p_x)}.
}
Note that $\bQ$ does not depend on the second variable, since it should commute with $K$ and $E$ which forms an irreducible representation of the Weyl pair. 

Now the \emph{Casimir simplification transformation} $\cR$ can be chosen to be
\Eqn{
\cR&=(x\to x-\l_2)\circ e^{2\pi i x\l_1}g_b^*(e^{2\pi b(-x-\l_1)})g_b(e^{2\pi b(x-\l_1)})\circ g_b^*(e^{2\pi b(\l_2-p_x)}).
}
Again this obviously keeps $\D(\be)$ and $\D(K)$ invariant since it only involves the variable $y$. Let us rename $y\to z$. One can then check using Lemma \ref{unit1} and \ref{unit2} repeatedly that it maps 
\Eqn{
\D(\bQ) &\to e^{2\pi b x}+e^{-2\pi b x}+e^{2\pi b p_x},\\
\D(\bf) &\to (e^{2\pi bx}+e^{-2\pi bx}+e^{2\pi bp_x})e^{2\pi b p_z}+e^{2\pi b(z+p_z)}+e^{2\pi b(-z+p_z)}.
}

Next, \emph{Kashaev's transformation} $\cK$ is given by
\Eq{
\cK:f(y)\mapsto F(\a):=\int_\R S_b(ix-i\a)S_b(ix+i\a)f(x)dx,
}
with inverse given by
\Eq{\label{Kashaevinv}
\cK\inv : F(\a)\mapsto f(x):=\int_0^\oo S_b(-ix+i\a)S_b(-ix-i\a)F(\a)d\mu(\a).
}
Here the contour of both integration follows Notation \ref{contour}. It is shown in \cite{Ka} that the functions
$\Psi_\a=S_b(ix-i\a)S_b(ix+i\a)$ with $\a>0$ form a complete set of orthogonal eigendistributions of the operator $\bQ$ with eigenvalues $e^{2\pi b\a}+e^{-2\pi b\a}$, hence the transformation intertwines $\D(\bQ)$ and it is a unitary transformation with the required measure $d\mu(\a)$.

Hence we now have
\Eqn{
\D(\be) &= e^{-2\pi b p_z},\\
\D(\bf) &= e^{2\pi b(z+p_z)}+e^{2\pi b(-z+p_z)}+e^{2\pi b(\a+p_z)}+e^{2\pi b(-\a+p_z)},\\
\D(K)&=e^{-2\pi b z}.
}
Finally applying $S_\a\inv$ on each slice $\cP_\a$ recovers the actions on $\int_{\R_+}^{\o+}\cP_\a d\mu(\a)$ as required.
\end{proof}
\begin{Rem} The simplification transformation $S_\l=S_b(\frac{Q}{2}-ix+i\l)$ is precisely the real analogue of the compact transformation given by the crystal basis \cite{Kashi, Lu1} of $\cU_q(\sl_2)$ mapping $E^n \to \frac{E^n}{[n]!}$ and hence the action becomes just shifting the index.
\end{Rem}
Now we would like to compare this unitary transformation with what we get from the compact quantum group decomposition, namely the integral transformation \eqref{realtensor} given by Theorem \ref{main}. 
\begin{Thm}\label{main2} The integral transformation 
\Eqn{
\int_{\R_+}^{\o+}\cP_\a d\mu(\a)&\to \cP_{\l_1}\ox \cP_{\l_2},\\
f(\a,z)&\mapsto F(x,y):=\int_{\R}\int_{\R_+} C(x,y,\a,z)f(\a,z)d\a dz,
}
with $C(x,y,\a,z)$ given by Theorem \ref{main} is the same as the unitary transformation $\Phi\inv$ up to a constant, where $\Phi$ is given by \eqref{Phidecomp}, and the constant depends only on $\l_1,\l_2$ and $\a$ which produces the Plancherel measure $d\mu(\a)$.
\end{Thm}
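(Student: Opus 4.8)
The plan is to avoid re-deriving the kernel from scratch and instead to use that, after Theorem~\ref{main} and Theorem~\ref{sl2trans}, we possess \emph{two} intertwiners with the same source and target, and to pin them down by Schur rigidity together with one explicit dilogarithm computation.

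First I would record that both maps are morphisms of $\cU_{q\til{q}}(\sl(2,\R))$-representations from $\int_{\R_+}^{\oplus}\cP_\a\,d\mu(\a)$ to $\cP_{\l_1}\ox\cP_{\l_2}$: for $C$ this is exactly the content of Theorem~\ref{main} (it intertwines $E,F,K$), and for $\Phi\inv$ it follows from Theorem~\ref{sl2trans}. Hence $\Phi\circ C$ is an endomorphism of $\int_{\R_+}^{\oplus}\cP_\a\,d\mu(\a)$ commuting with every generator on the common core. Since the decomposition of Theorem~\ref{sl2trans} is multiplicity-free and the fibers $\cP_\a$ are irreducible and pairwise inequivalent --- the Casimir $\bQ$ acts on $\cP_\a$ by $e^{2\pi b\a}+e^{-2\pi b\a}=2\cosh(2\pi b\a)$, which separates the points of $\R_+$ (cf.\ \cite{PT2}) --- the Schur lemma for such direct integrals forces $\Phi\circ C$ to be diagonal, i.e.\ to act on the slice $\cP_\a$ as multiplication by a scalar $c(\a)=c(\a,\l_1,\l_2)$. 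Equivalently $C=\Phi\inv\circ M_{c}$ with $M_c$ the multiplication by $c(\a)$ on the base, which is the assertion once $c$ is identified.

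The remaining work is to compute $c(\a)$ and see that it is precisely a Plancherel/phase factor. For this I would realize $\Phi\inv$ as an honest integral transformation by composing, in reverse order, the inverses of the factors in \eqref{Phidecomp}: the pointwise multiplications $\cS_{\l_i}=S_b(\frac{Q}{2}-ix+i\l_i)$, the Gaussian-- and $g_b$--multiplications constituting $\cT$ and $\cR$ (whose actions on wave functions are explicit through Lemma~\ref{unit1} and Lemma~\ref{unit2}), and $1\ox\cK\inv$, whose kernel is given in \eqref{Kashaevinv}. This yields an iterated contour integral for the kernel of $\Phi\inv$, which I would collapse to the single integral of \eqref{CG-real} using the dilogarithm identities: the Fourier formula of Lemma~\ref{FT} to evaluate the Gaussian--$g_b$ convolutions, and the Tau--Beta theorem (Lemma~\ref{tau}) together with the $4$--$5$ relation (Lemma~\ref{45}) to carry out the $\cK\inv$--integral. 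Matching the outcome against $C(x,y,\a,z)$ exhibits $c(\a)$ as an explicit ratio of $S_b$--factors in $\a,\l_1,\l_2$; taking its modulus by \eqref{gb1} and \eqref{reflection} one checks that $|c(\a)|^{-2}\,d\mu(\a)$ reproduces exactly the Plancherel density $|S_b(Q+2i\a)|^2\,d\a$, so that $C$ read with $d\mu(\a)$ coincides with the \emph{unitary} $\Phi\inv$ up to the stated constant (and is therefore itself unitary).

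The main obstacle will be this explicit collapse of the iterated integral into \eqref{CG-real}: composing $\cK\inv$ (itself an $S_b$--kernel transform) with the chain of $g_b$ and Gaussian conjugations produces a double (or triple) contour integral whose reduction depends on invoking Lemmas~\ref{tau} and~\ref{45} in precisely the right order, with tight control of the contours as in Notation~\ref{contour} and of the accumulated $\ze_b$ and Gaussian prefactors --- it is exactly these phases that determine $c(\a)$ and hence the measure. The Schur step guarantees a priori that such a reduction \emph{must} succeed, so the difficulty is purely computational. If the bookkeeping proves unwieldy, an alternative is to fix $c(\a)$ by applying $\Phi\circ C$ to a single convenient test vector --- one adapted to the virtual highest-weight vector $\d(x-\frac{iQ}{2}-\l)$ of Section~\ref{sec:posres} --- and reading off the scalar from the $G_b$--asymptotics \eqref{asymp}.
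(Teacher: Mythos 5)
Your computational core is exactly the paper's proof: the paper realizes $\Phi\inv$ as an explicit integral transformation (Lemma \ref{TinvRinv}, obtained via the Fourier formula of Lemma \ref{FT}), composes the factors of \eqref{Phidecomp} one at a time, evaluates the resulting contour integrals with Lemma \ref{tau} (for the $\cT\inv$ and $\cR\inv$ steps) and Lemma \ref{45} (for the $\cK\inv$ step), and matches the outcome against \eqref{CG-real} to read off $const(\a,\l_1,\l_2)$ with $|const|^2=|S_b(2i\a)|^2=|S_b(Q+2i\a)|^{-2}$, which is what ties the constant to the Plancherel density. The organizational device you should borrow from the paper is its unnamed composition lemma: every intermediate kernel is kept in the $E,K$-intertwining form \eqref{E-inter2}, and only the evolution of the scalar factor $c(p,\a)$ is tracked through each composition. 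This is what turns your feared ``unstructured triple contour integral'' into a sequence of single applications of Lemma \ref{tau} and Lemma \ref{45}, so the bookkeeping you flag as the main obstacle is tractable in precisely this way.

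Where you genuinely differ is the Schur-rigidity preamble, and as stated it has a gap. The direct-integral Schur lemma (the commutant of a multiplicity-free integral of pairwise-inequivalent irreducibles consists of diagonalizable operators) applies to \emph{bounded} operators in the commutant, whereas $C$ at this stage is only a densely defined integral transformation on the core $\cW_\a\widehat{\ox}\cW_z$ which \emph{formally} intertwines the unbounded generators; boundedness (indeed unitarity up to measure) of $C$ is precisely what Theorem \ref{main2} establishes, so invoking Schur to conclude $\Phi\circ C=M_{c(\a)}$ is circular unless you first prove $\Phi\circ C$ is closable with closure affiliated to the diagonal algebra --- formal commutation on a core with unbounded $E,F,K$ does not give membership in the commutant. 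Consequently the Schur step, and the test-vector shortcut that depends on it, can only serve as heuristic guidance; the explicit kernel computation, which you do propose and which coincides with the paper's, is what actually carries the proof. Finally, check your measure bookkeeping: with $C_{\eqref{CG-real2}}=const\cdot C_{\eqref{CG-real}}$ and $|const|^2=|S_b(Q+2i\a)|^{-2}$, it is this factor, played off against $d\mu(\a)=|S_b(Q+2i\a)|^2d\a$, that shows \eqref{CG-real} and \eqref{CG-real2} determine the same Plancherel measure; your formula ``$|c(\a)|^{-2}d\mu(\a)$ reproduces the Plancherel density'' has the exponents arranged so that it would give $|S_b(Q+2i\a)|^4\,d\a$ instead, so the relation needs to be stated in the paper's direction.
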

The rest of this section will be devoted to the proof of this Theorem.

First we start with the following observation:
\begin{Lem}\label{TinvRinv}
The transformations $\cT\inv$ and $\cR\inv$ are given as integral transformations by
\Eq{
\cT\inv\cdot f(x,y) &= \int_\R \frac{e^{2\pi i y(t-x)}}{G_b(Q+it)}f(x-t, x+y)dt,\\
\cR\inv\cdot f(x) &=\int\frac{e^{2\pi(\frac{Q}{2}+i\l_2)(x+\l_2-t)}G_b(it-ix-i\l_2)}{S_b(\frac{Q}{2}+it+i\l_1-i\l_2)S_b(\frac{Q}{2}+it-i\l_1-i\l_2)}f(t)dt.
}
\end{Lem}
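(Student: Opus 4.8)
The plan is to invert each transformation factor by factor and then realize the resulting composition as a single integral kernel. Reversing a composition $\cT=S_1\circ S_2\circ S_3$ gives $\cT\inv=S_3\inv\circ S_2\inv\circ S_1\inv$, and $(g_b^*)\inv=g_b$. The only inputs needed are the Fourier representation of $g_b$ from Lemma~\ref{FT}, the identity $g_b(e^{2\pi b r})=\overline{\zeta_b}/G_b(\frac{Q}{2}-ir)$ coming from the definition of $g_b$, the Baker--Campbell--Hausdorff formula (the commutator $[x,p_x]=-\frac{1}{2\pi i}$ being central), and the reflection property \eqref{reflection} together with $S_b(w)=e^{\frac{\pi i}{2}w(Q-w)}G_b(w)$ to pass between $G_b$ and $S_b$. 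Throughout, the contours are those of Notation~\ref{contour} and the asymptotics \eqref{asymp} guarantee absolute convergence.

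For $\cT\inv$ we have $\cT\inv=g_b(e^{2\pi b(x-p_x+p_y)})\circ e^{-2\pi i xy}\circ(y\mapsto y+x)$. I would insert the Fourier representation $g_b(e^{2\pi b(x-p_x+p_y)})=\int_\R \frac{e^{-\pi i t^2}}{G_b(Q+it)}e^{2\pi i t(x-p_x+p_y)}\,dt$, disentangle $e^{2\pi i t(x-p_x+p_y)}=e^{2\pi i tx}e^{-2\pi i t p_x}e^{-\pi i t^2}e^{2\pi i t p_y}$ by BCH (the scalar $e^{-\pi i t^2}$ arising from $[x,-p_x]$), and let the shift operators $e^{-2\pi i t p_x}:x\mapsto x-t$ and $e^{2\pi i t p_y}:y\mapsto y+t$ act on $e^{-2\pi i xy}f(x,x+y)$. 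Collecting the exponents, the three Gaussian contributions $-\pi i t^2$ (Fourier), $-\pi i t^2$ (BCH) and $+2\pi i t^2$ (from the shifts) cancel, leaving exactly $e^{2\pi i y(t-x)}/G_b(Q+it)$ times $f(x-t,x+y)$, which is the claimed kernel.

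For $\cR\inv$, writing $\cR=S\circ M\circ A\circ B\circ C$ with $S:x\mapsto x-\l_2$, $M=e^{2\pi i x\l_1}$, $A=g_b^*(e^{2\pi b(-x-\l_1)})$, $B=g_b(e^{2\pi b(x-\l_1)})$ and $C=g_b^*(e^{2\pi b(\l_2-p_x)})$, I would first collapse the three mutually commuting $x$-multiplication operators $B\inv A\inv M\inv$ into the single factor $\Phi(x)=\frac{G_b(\frac{Q}{2}-ix+i\l_1)}{G_b(\frac{Q}{2}+ix+i\l_1)}e^{-2\pi i x\l_1}$ using $g_b(e^{2\pi b r})=\overline{\zeta_b}/G_b(\frac{Q}{2}-ir)$. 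The surviving factor $C\inv=g_b(e^{2\pi b(\l_2-p_x)})$ is then expanded by Lemma~\ref{FT} into the shift integral $\int_\R\frac{e^{-\pi i t^2}e^{2\pi i t\l_2}}{G_b(Q+it)}\,e^{-2\pi i t p_x}\,dt$; acting on $\Phi(x)\,f(x+\l_2)$ and changing variables so that the argument of $f$ becomes the integration variable $t$ produces a kernel built from $1/G_b(Q+i(x+\l_2-t))$ and the ratio $G_b(\frac{Q}{2}-it+i\l_1+i\l_2)/G_b(\frac{Q}{2}+it+i\l_1-i\l_2)$, together with a residual Gaussian $e^{-\pi i w^2}$ where $w=x+\l_2-t$.

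It remains to convert and simplify. Applying \eqref{reflection} rewrites $1/G_b(Q+iw)$ as $G_b(it-ix-i\l_2)\,e^{\pi i w^2}e^{\pi Q w}$, whose $e^{\pi i w^2}$ cancels the residual Gaussian and whose $e^{\pi Q w}$ supplies part of the target prefactor; using $S_b=e^{\frac{\pi i}{2}w(Q-w)}G_b$ together with reflection once more turns the $G_b$-ratio into $1/[S_b(a)S_b(c)]$ with $a=\frac{Q}{2}+it+i\l_1-i\l_2$ and $c=\frac{Q}{2}+it-i\l_1-i\l_2$. The leftover phase bookkeeping reduces, via $a(Q-a)-c(Q-c)=(a-c)(Q-a-c)=4\l_1(t-\l_2)$, to the identity $e^{-2\pi i(t-\l_2)\l_1}\,e^{\frac{\pi i}{2}[a(Q-a)-c(Q-c)]}=1$, leaving precisely $e^{2\pi(\frac{Q}{2}+i\l_2)(x+\l_2-t)}$. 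The main obstacle is exactly this final accounting of the several quadratic Gaussian phases — from BCH, the Fourier kernel, the reflection formula and the $G_b\to S_b$ conversion — which must collapse with no leftover $t^2$-term; everything else is a direct substitution.
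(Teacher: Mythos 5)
Your proposal is correct and takes essentially the same route as the paper's own proof: both expand the $g_b$-factors of $\cT\inv$ and $\cR\inv$ through the Fourier formula of Lemma \ref{FT}, normal-order the exponentials by BCH, let the resulting shift operators act, and finish with the reflection property \eqref{reflection} and the change of variable $t\to x+\l_2-t$. The only difference is expository: you spell out the collapse of the commuting multiplication factors and the final phase bookkeeping (e.g. $a(Q-a)-c(Q-c)=4\l_1(t-\l_2)$), which the paper compresses into the phrase ``combining with the definition of $\cR$, changing $g_b$ to $S_b$ and shifting the contour gives the claim.''
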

\begin{proof}
Using Lemma \ref{FT}, we can write
\Eqn{
g_b(e^{2\pi b(x-p_x+p_y})&=\int_\R \frac{e^{2\pi itx-2\pi i t p_x+2\pi i t p_y}e^{-\pi i t^2}}{G_b(Q+it)}dt\\
&=\int_\R \frac{e^{2\pi itx-2\pi i t^2}}{G_b(Q+it)}e^{-2\pi i t p_x+2\pi i t p_y}dt,
}
where we have used $e^{2\pi i t x-2\pi i t p_x} = e^{-\pi i t^2}e^{2\pi i t x}e^{-2\pi i t p_x}$. Finally using $e^{-2\pi i t p_x+2\pi i t p_y}f(x,y) = f(x-t,y+t)$ we obtain the claim.

Similarly 
\Eqn{
g_b(e^{2\pi b(\l_2-p_x)})&=\int_\R \frac{e^{2\pi i t\l_2}e^{-2\pi i t p_x}e^{-\pi i t^2}}{G_b(Q+it)}dt=\int_\R e^{2\pi i t\l_2}e^{\pi Qt}G_b(-it)e^{-2\pi i t p_x}dt.
}
Combining with the definition of $\cR$, changing $g_b$ to $S_b$ and shifting the contour by $t\to x+\l_2-t$ gives the claim.
\end{proof}
From the proof of Theorem \ref{main}, we know that in order to intertwine the action of $E$ and $K$, the kernel $C(x,y,\a,z)$ is of the form
\Eq{C(x,y,\a,z)=&\d(x+y-z)\int_\R  e^{\pi(\frac{Q}{2}+ix+i\l_1)(r+\a-\l_1-y)}\nonumber\\
&\frac{S_b(\frac{Q}{2}+i\a-iz)}{S_b(\frac{Q}{2}-iy+i\l_2)}\frac{S_b(i\a-iy-i\l_1+ir)}{S_b(\frac{Q}{2}+i\a-iz+ir)}c(r,\a)dr\label{E-inter2}
}
for some function $c(r,\a)$. Here we have absorbed the factor $\frac{S_b(ip)}{S_b(\frac{Q}{2}+i\a-i\l_1-i\l_2+ip)}$ from \eqref{E-inter} into $c(r,\a)$.

Obviously the factor $S_b(\frac{Q}{2}+i\a-iz)$ is related to the simplification operator $S_{\a}$. Hence let us define
\Eq{
C'(x,y,\a,z):=S_b(\frac{Q}{2}+i\a-iz)\inv C(x,y,\a,z).
}
\begin{Lem}\label{Lem-cp}
If 
\Eq{
c(r,\a) =\frac{e^{-2\pi i \a(r+\a-\l_1)}}{G_b(\frac{Q}{2}-i\a+i\l_1)},
}
then the integral transformation
\Eq{\label{E-inter3}
f(x,y)\mapsto \int_{\R} C'(x,y,\a,x+y)f(\a,x+y)d\a
}
coincides with the unitary transformation $(\cS_{\l_1}\ox \cS_{\l_2})\inv \circ \cT\inv$.
\end{Lem}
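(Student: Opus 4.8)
The plan is to compute both sides of the claimed identity as explicit integral kernels in the variables $(x,y,\a)$, with the slice coordinate fixed to $z=x+y$, and to match them factor by factor. First I would write out the right-hand side. Since $\cS_{\l_1}\ox\cS_{\l_2}$ acts as multiplication by $S_b(\frac Q2-ix+i\l_1)S_b(\frac Q2-iy+i\l_2)$, its inverse multiplies by the reciprocal, while $\cT\inv$ is the integral operator from Lemma \ref{TinvRinv}. Substituting $t\mapsto x-\a$ in that formula turns the first argument $x-t$ of $f$ into $\a$ and the second into $x+y$, so that
$$(\cS_{\l_1}\ox\cS_{\l_2})\inv\circ\cT\inv\cdot f(x,y)=\frac{1}{S_b(\frac Q2-ix+i\l_1)S_b(\frac Q2-iy+i\l_2)}\int_\R\frac{e^{-2\pi iy\a}}{G_b(Q+ix-i\a)}f(\a,x+y)\,d\a.$$
This exhibits the target as an integral transform whose kernel I must compare with $C'(x,y,\a,x+y)$.

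For the left-hand side, I would substitute the prescribed $c(r,\a)=e^{-2\pi i\a(r+\a-\l_1)}/G_b(\frac Q2-i\a+i\l_1)$ into \eqref{E-inter2}, strip off the $r$-independent factor $S_b(\frac Q2+i\a-iz)$ according to the definition $C'=S_b(\frac Q2+i\a-iz)\inv C$, and set $z=x+y$. The factor $S_b(\frac Q2-iy+i\l_2)\inv$ coming from $\cS_{\l_2}\inv$ is present explicitly in \eqref{E-inter2} and cancels against the identical factor on the right-hand side. After this cancellation the lemma reduces to the single integral identity
$$\int_\R e^{\pi(\frac Q2+ix+i\l_1)(r+\a-\l_1-y)}e^{-2\pi i\a(r+\a-\l_1)}\frac{S_b(i\a-iy-i\l_1+ir)}{S_b(\frac Q2+i\a-ix-iy+ir)}\,dr=\frac{G_b(\frac Q2-i\a+i\l_1)\,e^{-2\pi iy\a}}{S_b(\frac Q2-ix+i\l_1)\,G_b(Q+ix-i\a)},$$
which is the heart of the matter.

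To evaluate the $r$-integral I would first apply the reflection property \eqref{reflection} to the denominator, rewriting $S_b(\frac Q2+i\a-ix-iy+ir)\inv=S_b(\frac Q2-i\a+ix+iy-ir)$, so that the integrand becomes a product of two $S_b$-factors, one carrying $+ir$ and the other $-ir$, times the linear/Gaussian exponential prefactors. Converting each $S_b$ to $G_b$ via $S_b(w)=e^{\frac{\pi i}{2}w(Q-w)}G_b(w)$ produces quadratic-in-$r$ terms; the crucial check is that these cancel against one another and against the prefactors, leaving a pure exponential weight $e^{-2\pi\b r}$ times a ratio of the form $G_b(\cdot+ir)/G_b(Q+ir)$. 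At that point the integral is exactly a Tau-Beta integral, Lemma \ref{tau}, and evaluates in closed form; collecting the leftover constants and invoking \eqref{reflection} once more should reproduce the right-hand side above.

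The main obstacle is precisely this bookkeeping in the final step: verifying that the quadratic-in-$r$ contributions from the two $S_b\to G_b$ conversions cancel exactly, so that the pure-exponential hypothesis of Tau-Beta is met, and that the surviving quadratic phases in $x,y,\a,\l_1$ combine, after a reflection, into the precise factors $S_b(\frac Q2-ix+i\l_1)\inv$, $G_b(Q+ix-i\a)\inv$ and $e^{-2\pi iy\a}$. One must also confirm that the $r$-contour fixed in Theorem \ref{main} (following Notation \ref{contour}) lies in the convergence strip required by Lemma \ref{tau}; by the asymptotics \eqref{asymp} this reduces to checking the real parts of the relevant arguments, which the chosen contour is designed to accommodate.
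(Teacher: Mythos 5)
Your overall strategy is the same as the paper's: realize $(\cS_{\l_1}\ox \cS_{\l_2})\inv\circ\cT\inv$ as an explicit kernel via Lemma \ref{TinvRinv} (your substitution $t\mapsto x-\a$ is the paper's closing change of variables $\a\to x-t$ done at the outset), cancel the common factors, and reduce the Lemma to a single $r$-integral identity, to be evaluated by an $S_b\to G_b$ conversion followed by the Tau-Beta theorem. Your reduced identity is correct, and this is exactly how the paper proceeds.

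The gap is in the step you yourself call the heart of the matter, and it is not mere bookkeeping: your opening move of reflecting the denominator, $S_b(\tfrac Q2+i\a-ix-iy+ir)\inv=S_b(\tfrac Q2-i\a+ix+iy-ir)$, makes the subsequent cancellation false. Write $A=i\a-iy-i\l_1$, $B=\tfrac Q2+i\a-ix-iy$, $C=Q-B$. The quadratic-in-$r$ part of the phase in $S_b(w)=e^{\frac{\pi i}{2}w(Q-w)}G_b(w)$ comes from $-\frac{\pi i}{2}w^2$, which contributes $+\frac{\pi i}{2}r^2$ whether $w$ carries $+ir$ or $-ir$, since the term is even in $r$. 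Hence after your reflection the two factors $S_b(A+ir)\,S_b(C-ir)$ produce a net Gaussian $e^{\pi i r^2}$ --- the quadratic phases \emph{add} rather than cancel --- and your exponential prefactors, being only linear in $r$, cannot absorb it; moreover the resulting product $G_b(A+ir)G_b(C-ir)$ is not of the Tau-Beta shape $G_b(\cdot+ir)/G_b(Q+ir)$, and turning it back into a ratio forces you to reflect once more, which exactly undoes your first step. The fix, which is the paper's actual route, is to not reflect at all: convert the ratio $S_b(A+ir)/S_b(B+ir)$ directly into $G_b(A+ir)/G_b(B+ir)$; since both arguments carry $+ir$, the $\frac{\pi i}{2}r^2$ phases cancel between numerator and denominator, leaving a purely linear exponential that combines with your prefactors into $e^{-2\pi r(i\a-ix)}$. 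Then shift the contour $ir\mapsto ir+\tfrac Q2-i\a+ix+iy$ (consistent with Notation \ref{contour}), so the integrand becomes $e^{-2\pi r(i\a-ix)}\,G_b(\tfrac Q2+ix-i\l_1+ir)/G_b(Q+ir)$ up to constants; Lemma \ref{tau} evaluates this as $G_b(\tfrac Q2+ix-i\l_1)G_b(i\a-ix)/G_b(\tfrac Q2+i\a-i\l_1)$, and one final reflection together with the collected phases yields precisely your right-hand side. With that correction your argument coincides with the paper's proof.
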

\begin{proof}
By Lemma \ref{TinvRinv}, the map $(\cS_{\l_1}\ox \cS_{\l_2})\inv \circ \cT\inv $ is given by
\Eqn{
f(x,y)\mapsto S_b(\frac{Q}{2}-ix+i\l_1)\inv S_b(\frac{Q}{2}-iy+i\l_2)\inv\int_\R \frac{e^{-2\pi i t(x+t)}}{G_b(Q+it)}f(x+t,x+y)dt.
}
On the other hand, we have
\Eqn{
&\int_\R C'(x,y,\a,x+y)f(\a,x+y)d\a\\
=&\iint_{\R\x\R} \frac{e^{\pi(\frac{Q}{2}+ix+i\l_1)(r+\a-\l_1-y)}S_b(i\a-iy-i\l_1+ir)}{S_b(\frac{Q}{2}+i\a-ix-iy+ir)S_b(\frac{Q}{2}-iy+i\l_2)}c(r,\a)f(\a,x+y) dr d\a\\
=&\iint_{\R\x\R} \frac{e^{\pi(\frac{Q}{2}+ix+i\l_1)(r+\a-\l_1-y)}e^{-2\pi i\a(r+\a-\l_1)}S_b(i\a-iy-i\l_1+ir)}{S_b(\frac{Q}{2}+i\a-ix-iy+ir)S_b(\frac{Q}{2}-iy+i\l_2)G_b(\frac{Q}{2}-i\a+i\l_1)}f(\a,x+y) dr d\a,
}
where the contour of both integrals follow Notation \ref{contour}. Rewriting $S_b$ as $G_b$ and shifting the contour of $r$ we have
\Eqn{
=&\int_\R \frac{e^{2\pi i\a(x+\l_1-\a)}e^{-2\pi i x y}e^{\frac{\pi i}{2}(x+\l_1)^2}e^{-\frac{\pi i}{8}Q^2}}{S_b(\frac{Q}{2}-iy+i\l_2)G_b(\frac{Q}{2}-i\a+i\l_1)}\cdot\\
&\left(\int_\R\frac{e^{-2\pi r(i\a-ix)}G_b(i\a-iy-i\l_1+ir)}{G_b(\frac{Q}{2}+i\a-ix-iy+ir)}dr\right)f(\a,x+y)d\a\\
=&\int_\R \frac{e^{-2\pi i \a y-h(\frac{Q}{2}-i\a+i\l_1)+\frac{1}{2}h(\frac{Q}{2}+ix-i\l_1)+h(i\a-ix)}}{S_b(\frac{Q}{2}-iy+i\l_2)G_b(\frac{Q}{2}-i\a+i\l_1)}\cdot\\
&\left(\int_\R\frac{e^{-2\pi r(i\a-ix)}G_b(\frac{Q}{2}+ix-i\l_1+ir)}{G_b(Q+ir)}dr\right)f(\a,x+y)d\a,
}
where $h(x):=\pi ix(Q-x)$. By Lemma \ref{tau} it equals
\Eqn{
=&\int_\R \frac{e^{-2\pi i \a y-h(\frac{Q}{2}-i\a+i\l_1)+\frac{1}{2}h(\frac{Q}{2}+ix-i\l_1)+h(i\a-ix)}}{S_b(\frac{Q}{2}-iy+i\l_2)G_b(\frac{Q}{2}-i\a+i\l_1)} \frac{G_b(\frac{Q}{2}+ix-i\l_1)G_b(i\a-ix)}{G_b(\frac{Q}{2}+i\a-i\l_1)} f(\a,x+y)d\a\\
=&\frac{S_b(\frac{Q}{2}+ix-i\l_1)}{S_b(\frac{Q}{2}-iy+i\l_2)}\int_{\R} \frac{e^{-2\pi i \a y}}{G_b(Q+ix-i\a)}f(\a,x+y)d\a\\
=&S_b(\frac{Q}{2}-ix+i\l_1)\inv S_b(\frac{Q}{2}-iy+i\l_2)\inv\int_\R \frac{e^{2\pi i y (t-x)}}{G_b(Q+it)}f(x-t,x+y)dt\\
=&(\cS_{\l_1}\ox \cS_{\l_2})\inv \circ \cT\inv,
}
where in the last line we have renamed $\a\to x-t$.
\end{proof}
We would like to compute the transformation $\Phi\inv$ given by \eqref{Phidecomp}, and at the same time keeping the kernel of the form \eqref{E-inter2}. We will use the following trick:
\begin{Lem} If $\Psi$ and $D$ are unitary integral operators of the form
\Eq{
\Psi:f(\a)&\mapsto f(x):=\int_\W \Psi(\a,x)f(\a)d\a,\\
D:f(x)&\mapsto f(x):=\iint_{\R\x\R} \left(D(p+\a,x)c(p,\a)dp\right)f(\a)d\a,
}
for some contour $\W\subset\R$ of $\a$, then
\Eq{
D\circ \Psi: f(\a)\mapsto \int_\W\left(\int_\R D(p+\a,x)\int_\R c(p+\b,\a-\b)\Psi(\a,\a-\b)d\b dp\right)f(\a)d\a,
}
where the outside integral (and measure) $\int_\W d\a$ is kept invariant. In particular this means the middle factor is transformed as 
\Eq{
c(p,\a)\mapsto \int_\R c(p+\b,\a-\b)\Psi(\a,\a-\b)d\b.
}
\end{Lem}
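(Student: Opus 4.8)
The plan is to regard both $\Psi$ and $D$ as integral operators with explicit kernels, compose them by integrating over the intermediate variable, and then bring the resulting double kernel into the stated form by two affine changes of variables. Denote by $u$ the intermediate $x$-type variable, namely the output of $\Psi$ and the input of $D$. Then $\Psi$ has kernel $\Psi(\a,u)$, while the defining formula for $D$ exhibits its kernel as $K_D(u,x)=\int_\R D(p+u,x)c(p,u)\,dp$. Composing and moving $f(\a)$ outside gives
\Eqn{
(D\circ\Psi)f(x)=\int_\W\left[\int_\R\left(\int_\R D(p+u,x)c(p,u)\,dp\right)\Psi(\a,u)\,du\right]f(\a)\,d\a ,
}
so the kernel of $D\circ\Psi$ is $K(\a,x)=\int_\R\int_\R D(p+u,x)\,c(p,u)\,\Psi(\a,u)\,dp\,du$. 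The outer integral $\int_\W d\a$ is plainly unchanged, which already establishes that the composed operator retains the same outside measure.

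The second step produces the transformation of the middle factor. I would first substitute $u=\a-\b$ in the intermediate integral, turning $D(p+u,x)$ into $D(p+\a-\b,x)$ and the pair $c(p,u)\Psi(\a,u)$ into $c(p,\a-\b)\Psi(\a,\a-\b)$; as $u$ runs over $\R$ so does $\b$, so the domain is preserved. Next I would shift the inner variable by $p\mapsto p+\b$, which makes the argument of $D$ equal to $p+\a$ independently of $\b$, while $c(p,\a-\b)$ becomes $c(p+\b,\a-\b)$. Applying Fubini to carry the $\b$-integral inside the $p$-integral then yields
\Eqn{
K(\a,x)=\int_\R D(p+\a,x)\left(\int_\R c(p+\b,\a-\b)\,\Psi(\a,\a-\b)\,d\b\right)dp ,
}
which is exactly the advertised kernel. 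Comparing with the defining form of $D$ identifies the ``middle factor'' as being replaced by $\int_\R c(p+\b,\a-\b)\Psi(\a,\a-\b)\,d\b$, proving the final displayed claim.

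The only substantive point beyond this bookkeeping is the justification of the two interchanges of order of integration together with the implicit contour shifts, and I expect this to be the main---indeed the only---obstacle. In every application the lemma is used with $\Psi$ one of the explicit unitaries $\cS_\a\inv$, $(1\ox\cK)\inv$, $(1\ox\cR)\inv$ from \eqref{Phidecomp}, acting on the rapidly decreasing core $\cW_\a\widehat{\ox}\cW_z$, and all kernels involved are products and ratios of $G_b$ with Gaussian and exponential factors. By the asymptotics \eqref{asymp} these decay rapidly along the contours fixed in Notation \ref{contour}, so Fubini is legitimate and the substitutions $u=\a-\b$ and $p\mapsto p+\b$ are mere parallel translations of contours that stay within the region where no poles are crossed. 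Once this convergence is in place, the algebraic identity is forced by the two affine substitutions and no further computation is required.
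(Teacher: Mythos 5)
Your proof is correct and takes essentially the same approach as the paper: write out the composed kernel, interchange the order of integration by appealing to absolute convergence, and perform the two affine changes of variables (the paper shifts $p\to p-\b+\a$ and $\b\to\a-\b$, which is the same pair of substitutions you carry out in the opposite order). Your extra care in justifying Fubini via the decay of the $G_b$-kernels along the fixed contours only sharpens the paper's terser appeal to unitarity.
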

\begin{proof}
$D\circ \Psi$ is given by
\Eqn{
D\circ \Psi: f(\a)\mapsto f(x):=\iint_{\R\x\R}\left(D(p+\b,x)c(p,\b)dp\right)\int_\W \Psi(\a,\b)f(\a)d\a d\b.
}
Since the integral operator is assumed to be unitary, the integrals are absolutely convergent and we can interchange the order of integration to get
\Eqn{
D\circ \Psi: f(\a)\mapsto f(x):=\int_\W\left(\iint_{\R\x\R}D(p+\b,x)c(p,\b)\Psi(\a,\b)d\b dp\right)f(\a)d\a.
}
Now we shift $p\to p-\b+\a$ and $\b\to \a-\b$ to obtain the result. 
\end{proof}
Now we see that our kernel $C'(x,y,\a,z)$ from \eqref{E-inter3} is precisely of the form as in the Lemma, hence we can compute $\Phi\inv$ by applying the Lemma twice on the function $c(p,\a)$ from Lemma \ref{Lem-cp}.

Using the expression of $\cR\inv$ from Lemma \ref{TinvRinv}, $(\cS_{\l_1}\ox \cS_{\l_2})\inv\circ \cT\inv \circ (\cR\ox 1)\inv$ equals \eqref{E-inter3} for 
\Eqn{
&c(p,\a)\\
=& \int_\R \frac{e^{-2\pi i(\a-\b)(r+\a-\l_1)}}{G_b(\frac{Q}{2}-i\a+i\b+i\l_1)}\frac{e^{2\pi(\frac{Q}{2}+i\l_2)(-\b+\l_2)}G_b(i\b-i\l_2)}{S_b(\frac{Q}{2}+i\a+i\l_1-i\l_2)S_b(\frac{Q}{2}+i\a-i\l_1-i\l_2)}d\b\\
=& \frac{e^{2\pi i\l_1(\l_1-r)+2\pi i\l_2(\l_2-\a)+\pi Q r+\frac{\pi i Q^2}{2}}G_b(\frac{Q}{2}-i\a-i\l_1+i\l_2)}{G_b(\frac{Q}{2}+i\a-i\l_1-i\l_2)}\\
&\int_\R \frac{e^{-2\pi \b(\frac{Q}{2}-i\a+i\l_1+i\l_2-ir)}G_b(\frac{Q}{2}+i\a-i\l_1-i\l_2+i\b)}{G_b(Q+i\b)}d\b.
}
Using Lemma \ref{tau}, we arrive at
\Eqn{
c(p,\a)&= \frac{e^{2\pi i\l_1(\l_1-r)+2\pi i\l_2(\l_2-\a)+\pi Q r+\frac{\pi i Q^2}{2}}G_b(\frac{Q}{2}-i\a-i\l_1+i\l_2)}{G_b(\frac{Q}{2}+i\a-i\l_1-i\l_2)}\cdot\\
&\frac{G_b(\frac{Q}{2}+i\a-i\l_1-i\l_2)G_b(\frac{Q}{2}-i\a+i\l_1+i\l_2-ir)}{G_b(Q-ir)}\\
&=\frac{e^{-\pi i (\a-\l_1)^2+\pi i(\l_1-\l_2)^2+\pi i\l_1^2+2\pi i r(\l_2-\a)+\frac{\pi i Q^2}{4}}G_b(\frac{Q}{2}-i\a-i\l_1+i\l_2)G_b(ir)}{G_b(\frac{Q}{2}+i\a-i\l_1-i\l_2+ir)}.
}

Finally, we apply Kashaev's inverse map $\cK\inv$ given by \eqref{Kashaevinv}. By Lemma \ref{Lem-cp}
the full map $\Phi\inv$ then equals \eqref{E-inter3} for 
\Eqn{
&c(p,\a)\\
=&e^{-\pi i (\a-\b-\l_1)^2+\pi i(\l_1-\l_2)^2+\pi i\l_1^2+2\pi i (r+\b)(\l_2-\a+\b)+\frac{\pi i Q^2}{4}}\\
&\frac{G_b(\frac{Q}{2}-i\a-i\l_1+i\l_2+i\b)G_b(ir+i\b)}{G_b(\frac{Q}{2}+i\a-i\l_1-i\l_2+ir)}S_b(-i\b)S_b(-i\b+2i\a)S_b(\frac{Q}{2}+i\a-iz)d\b\\
=&\frac{e^{-3\pi i\a^2+2\pi i\a\l_1+\pi i(\l_1-\l_2)^2+2\pi i r(\l_2-\a)+\pi Q\a+\frac{\pi i Q^2}{4}}S_b(\frac{Q}{2}+i\a-iz)}{G_b(\frac{Q}{2}+i\a-i\l_1-i\l_2+ir)}\\
&\int_\R e^{-2\pi \b(\frac{Q}{2}+i\l_1-i\l_2-i\a-ir)}\frac{G_b(\frac{Q}{2}-i\a-i\l_1+i\l_2+i\b)G_b(ir+i\b)}{G_b(Q+i\b)G_b(Q-2i\a+i\b)d\b}.
}

Using Lemma \ref{45} we obtain
\Eqn{
c(p,\a)=&\frac{e^{-3\pi i\a^2+2\pi i\a\l_1+\pi i(\l_1-\l_2)^2+2\pi i r(\l_2-\a)+\pi Q\a+\frac{\pi i Q^2}{4}}}{G_b(\frac{Q}{2}+i\a-i\l_1-i\l_2+ir)}\\
&\frac{G_b(\frac{Q}{2}-i\a-i\l_1+i\l_2)G_b(\frac{Q}{2}+i\l_1-i\l_2-i\a-ir)G_b(ir)}{G_b(Q-2i\a-ir)G_b(\frac{Q}{2}+i\l_1-i\l_2-i\a)}\\
=&\frac{e^{\pi i(\l_1^2+\l_2^2-\a^2)}e^{\frac{\pi i Q^2}{4}}S_b(\frac{Q}{2}+i\a-iz)S_b(\frac{Q}{2}+i\a-i\l_1+i\l_2)S_b(ir)S_b(2i\a+ir)}{S_b(\frac{Q}{2}+i\a+i\l_1-i\l_2)S_b(\frac{Q}{2}+i\a-i\l_1-i\l_2+ir)S_b(\frac{Q}{2}+i\a-i\l_1+i\l_2+ir)}
}

\begin{proof}[Proof of Theorem \ref{main2}]
Combining with \eqref{E-inter2}, we obtain
\Eqn{
\Phi\inv &= (\cS_{\l_1}\ox \cS_{\l_2})\inv\circ \cT\inv \circ (\cR\ox 1)\inv \circ (\cK\ox 1)\inv \circ (1\ox \cS_{\a})\\
f(\a,z)&\mapsto F(x,y):=\int_\R\int_0^\oo C(x,y,\a,z)f(\a,z) d\mu(\a) dz,
}
where
\Eq{
C(x,y,\a,z)&=\d(x+y-z)\int_\R  e^{\pi(\frac{Q}{2}+ix+i\l_1)(r+\a-\l_1-y)}e^{\pi i(\l_1^2+\l_2^2-\a^2)}e^{\frac{\pi i Q^2}{4}}\cdot\nonumber\\\label{CG-real2}
&\frac{S_b(\frac{Q}{2}+i\a-iz)}{S_b(\frac{Q}{2}-iy+i\l_2)}\frac{S_b(i\a-iy-i\l_1+ir)}{S_b(\frac{Q}{2}+i\a-iz+ir)}\cdot\\
&\frac{S_b(\frac{Q}{2}+i\a-i\l_1+i\l_2)S_b(ir)S_b(2i\a+ir)}{S_b(\frac{Q}{2}+i\a-i\l_1-i\l_2+ir)S_b(\frac{Q}{2}+i\a+i\l_1-i\l_2)S_b(\frac{Q}{2}+i\a-i\l_1+i\l_2+ir)}dr\nonumber.
}
Compare with \eqref{CG-real}, we see that
\Eq{
C(x,y,\a,z)_{\eqref{CG-real2}}&=const(\a,\l_1,\l_2)C(x,y,\a,z)_{\eqref{CG-real}},
}
where
\Eqn{
const(\a,\l_1,\l_2)&=\frac{e^{\pi i(\l_1^2+\l_2^2-\a^2)}e^{\frac{\pi i Q^2}{4}}S_b(2i\a)}{S_b(\frac{Q}{2}+i\a+i\l_1-i\l_2)}.
}
In particular $$|const(\a,\l_1,\l_2)|^2 =|S_b(2i\a)|^2=|S_b(Q+2i\a)|^{-2},$$
and hence the $L^2$ measure $d\mu(\a)$ of the transformation given by \eqref{CG-real} actually coincides with the one given by \eqref{CG-real2}.
\end{proof}

\begin{Rem} Using the substitution rule \eqref{replace}-\eqref{replace2}, some of the transformations presented in Theorem \ref{sl2trans} descend to transformations by classical $\G$ function, except $\cR$ and $\cK$ which diagonalize the Casimir operator. Therefore the same argument cannot be applied to the tensor product decomposition of \emph{classical} $SL(2,\R)$ representations. This is clearly due to the fact that the Casimir operator under tensor product of unitary representations of $SL(2,\R)$ consists of both continuous and discrete spectrum, which differs from the quantum case of $\cU_{q\til{q}}(\sl(2,\R))$.
\end{Rem}

\section{Remarks on higher rank}\label{sec:higher}
We have seen in Proposition \ref{real2finite} that using the notion of virtual highest weight vector, one can replace the variables of the positive representations $\cP_\l\simeq L^2(\R)$ of $\cU_{q\til{q}}(\sl(2,\R))$ and recover the finite dimensional representations (up to a sign) of the compact quantum group $\cU_q(\sl_2)$. This can obviously be generalized to higher rank split real quantum group $\cU_{q\til{q}}(\g_\R)$ since the virtual highest weight vectors are also known. 

In particular, the positive representation of $\cU_{q\til{q}}(\g_\R)$ is given by $\cP_{\vec[\l]}\simeq L^2(\R^{l(w_0)})$ and parametrized by $\vec[\l]\in\R_{\geq 0}^{rank(\g)}$, where $l(w_0)$ is the length of the longest element $w_0$ of the Weyl group. Following Proposition \ref{real2finite}, what we get will be an expression of the irreducible finite dimensional representation $V_{\vec[N]}$ of the corresponding compact quantum group $\cU_q(\g_c)$, parametrized by $rank(\g)$ positive integers $\vec[N]\in\Z_{\geq 0}^{rank(\g)}$, and represented by a finite dimensional $\C$-vector subspace of $l^2(\Z_{\geq0}^{l(w_0)})$.

\begin{Ex}
Let us consider the positive representations of $\cU_{q\til{q}}(\sl(3,\R))$ generated by $\{E_i,F_i,K_i\}_{i=1,2}$ on $L^2(\R^3,dudvdw)$ given by \cite{Ip2}:

\Eq{
E_1&=\left[\frac{Q}{2b}-\frac{i}{b}u\right]_q e^{2\pi b(-p_u-p_v+p_w)}+\left[\frac{Q}{2b}+\frac{i}{b}(v-w)\right]_q e^{-2\pi bp_v},\\
E_2&=\left[\frac{Q}{2b}-\frac{i}{b}w\right]_q e^{-2\pi bp_w},\\
F_1&=\left[\frac{Q}{2b}+\frac{i}{b}(-u+v+2\l_1)\right]_q e^{2\pi bp_v},\\
F_2&=\left[\frac{Q}{2b}+\frac{i}{b}(u+2\l_2)\right]_q e^{2\pi bp_u}+\left[\frac{Q}{2b}+\frac{i}{b}(2u-v+w+2\l_2)\right]_qe^{2\pi bp_w},\\
K_1&=e^{2\pi b(u-2v+w)},\\
K_2&=e^{2\pi b(-2u+v-2w)}.
}
Following the notion of virtual highest weight calculated in \cite{Ip5}, we make a shift $u\to u-\l_2,v\to v-\l_1-\l_2,w\to w-\l_1$, and using similar arguments of Proposition \ref{real2finite}, we write $$v_{k,m,n} = \d(u+\frac{ib(N_2-2k)}{2})\d(v+\frac{ib(N_1+N_2-2m)}{2})\d(w+\frac{ib(N_1-2n)}{2})$$
and obtain an expression for the finite dimensional representation $V_{N_1,N_2}$ of $\cU_q(\sl_3)$:
\Eq{
E_1\cdot v_{k,m,n}&= [k]_q v_{k-1,m-1,n+1}+[m-n]v_{k,m-1,n},\\
E_2\cdot v_{k,m,n}&= [n]_q v_{k,m,n-1},\\
F_1\cdot v_{k,m,n}& = [N_1+k-m]_qv_{k,m+1,n},\\
F_2\cdot v_{k,m,n}& = [N_2-k]_qv_{k+1,m,n}+[N_2-2k+m-n]_q v_{k,m,n+1},\\
K_1\cdot v_{k,m,n}& = q^{k-2m+n+N_1}v_{k,m,n},\\
K_2\cdot v_{k,m,n}& = q^{-2k+m-2n+N_2}v_{k,m,n},
}
where we have removed the minus signs arising from the conversion. This seems to be a new explicit expression for the representation $V_{N_1,N_2}$ that does not use a weight diagram. We see that the highest weight vector is given by $v_{0,0,0}$ and the lowest weight vector is given by $v_{N_2,N_1+N_2,N_1}$. In general the basis of the representation $V_{N_1,N_2}$ can be obtained for example by taking the action of the canonical basis \cite{Lu1}:
\Eqn{F_1^a F_2^b F_1^c\cdot v_{0,0,0},\tab a+c\leq b,c\leq N_1,\\
F_2^a F_2^b F_1^c \cdot v_{0,0,0},\tab a+c< b,c\leq N_2,
}
and it will be a vector subspace of the vector space $\cV$ spanned by $\{v_{k,m,n}\}$ with $0\leq k\leq N_2, 0\leq m\leq N_1+N_2, 0\leq n\leq N_1$.
\end{Ex}
Now the tensor product decomposition of two finite dimensional representations of $\cU_q(\g_c)$ amounts to solving the functional equation for the coefficients $C_{\vec[k],\vec[m]}^{\vec[n]}$ for 
$$v_{\vec[k]}=\sum C_{\vec[k],\vec[m]}^{\vec[n]} x_{\vec[m]}\ox y_{\vec[n]}.$$
\begin{Con} The coefficients $C_{\vec[k],\vec[m]}^{\vec[n]}$ can be represented explicitly by linear combinations of $q$-binomials.
\end{Con}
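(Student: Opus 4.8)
The plan is to imitate, root by root, the $\sl_2$ derivation of Section \ref{subsec:CG}. First I would exploit the Cartan generators: applying each $K_i$ to both sides of $v_{\vec[k]}=\sum C_{\vec[k],\vec[m]}^{\vec[n]} x_{\vec[m]}\ox y_{\vec[n]}$ and matching eigenvalues, exactly as in Lemma \ref{CG-K}, forces a system of linear Kronecker-delta constraints on the multi-indices, expressing that $C$ is supported on a single weight space. This collapses the free multi-indices onto a sublattice and reduces the problem to a function of $\mathrm{rank}(\g)$ fewer variables. Next, for each simple root $i$ I would apply $\D(E_i)=E_i\ox 1+K_i\ox E_i$ to both sides and compare coefficients, producing a coupled system of $q$-difference (recurrence) equations generalizing \eqref{Func-E}. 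Because the representation formulas involve only $q$-integers $[\,\cdot\,]_q$, each equation expresses one coefficient as a $\Z[q,q\inv]$-linear combination of its neighbours with coefficients that are ratios of $q$-integers.

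The key structural input is the theory of canonical (crystal) bases \cite{Lu1,Kashi,Lu2}: I would fix, for every irreducible component $V_S$ (now labelled by a dominant weight), a highest weight vector normalized as in \eqref{crystal}, and then generate all of $V_S$ by applying the lowering operators $F_i$ along a reduced word for the longest Weyl group element, using the PBW-type canonical basis exhibited in the $\sl_3$ example above. Iterating the single-variable recurrences along this PBW order — just as the induction on $m$ in Lemma \ref{CG-E} produced a sum of products of two $q$-binomials — should express each $C_{\vec[k],\vec[m]}^{\vec[n]}$ as a finite sum of products of $q$-binomials, with the remaining constants $c_{\vec[r]}$ pinned down (as in \eqref{cp}) by the $F_i$-intertwining together with the lowest-weight condition. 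A further consistency requirement then enters: since there is now a lattice of relations rather than a single chain, solvability demands a compatibility check guaranteeing that iterating along different PBW orders agrees. I expect this to reduce, via the quantum Serre relations, to $q$-binomial identities of Pascal and Pfaff--Saalsch\"{u}tz type, mirroring the role they play in Lemma \ref{intertwine-F}.

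The main obstacle is twofold. The serious representation-theoretic difficulty is \emph{multiplicity}: for $\g$ of rank $\geq 2$ an irreducible $V_S$ typically occurs in $V_M\ox V_N$ with multiplicity greater than one (already for $\sl_3$ the relevant tensor multiplicities exceed one), so the highest-weight subspace is not one-dimensional and the $E_i$- and $K_i$-equations alone do not determine $C$ uniquely; one must additionally choose a canonical basis of this multiplicity space and extend the normalization \eqref{crystal} accordingly. The technical difficulty — and the actual content of the conjecture — is closure: one must verify that the iterated solution genuinely lies in the $\Z[q,q\inv]$-span of products of $q$-binomials, rather than in some larger class of basic hypergeometric sums. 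I expect this to follow from the observation that every step introduces only $q$-integer factors, so that after clearing denominators via the canonical-basis normalization the coefficients are forced into the $q$-binomial span; but establishing this closure uniformly in the rank, and not merely case by case for $\sl_3$, is where I expect the real work to lie.
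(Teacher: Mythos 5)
You should first note what you are up against: the statement is a \emph{Conjecture} in the paper, and the paper offers no proof of it. The author only remarks that the intertwining conditions are finite difference equations in $q$-numbers, that it should suffice to determine the highest weight vectors, and that no explicit general formula in terms of $q$-binomials is known beyond $\cU_q(\sl_2)$ and at most the octet representations of $\cU_q(\sl_3)$. Your plan --- impose the $K_i$ weight constraints, derive coupled recurrences from $\D(E_i)$, normalize highest weight vectors via canonical basis as in \eqref{crystal}, and propagate along a PBW order, with the Littlewood--Richardson bookkeeping --- is essentially the very strategy the paper itself proposes in Section \ref{sec:higher}. As a roadmap it is faithful to the paper's intent, but it is not a proof, and the two places where it stops short are exactly the places where the conjecture is open.

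Concretely, the gap is twofold, and you name both points yourself without closing either. First, multiplicity: for rank $\geq 2$ the highest-weight subspace of a fixed dominant weight in $V_M\ox V_N$ is generically of dimension greater than one, so the $E_i$- and $K_i$-equations have a solution space of that dimension; the normalization \eqref{crystal} must be upgraded to a choice of basis of the multiplicity space (Lusztig's based-module structure on tensor products \cite{Lu2} supplies a canonical one), and your proposal does not verify that this choice produces coefficients of the asserted form. Second, and decisively, closure: the inference ``every step introduces only $q$-integer factors, hence the coefficients lie in the $q$-binomial span'' does not follow. Iterating a lattice of coupled recurrences along a PBW order produces nested basic hypergeometric sums, and whether these collapse to finite $\Z[q,q^{-1}]$-linear combinations of products of $q$-binomials --- uniformly in the rank and in the weight parameters --- is precisely the assertion of the conjecture, not a consequence of the bookkeeping. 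Deferring this as ``where the real work lies'' means the proposal establishes nothing beyond the heuristic the paper already records; what it does add of value is the explicit identification of the multiplicity problem, which the paper mentions only obliquely through the Littlewood--Richardson rule.
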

This seems to be straightforward since the functional equations are just finite difference equations expressed in terms of $q$-numbers. In particular it seems only necessary to calculate the expressions for the highest weight vectors, since they uniquely determined the rest of the basis. General methods to obtain the Clebsch-Gordan coefficients $C_{\vec[k],\vec[m]}^{\vec[n]}$ have been studied for the classical \cite{AKHD} and quantum groups \cite{Ma} by solving similar functional equations. However, we are not aware of any explicit general formula in terms of $q$-binomials and the weight parameters $\vec[N]$ in the quantum level except for the case of $\cU_q(\sl_2)$ and at most the octet representations of $\cU_q(\sl_3)$.

Applying the philosophy of Remark \ref{replaceqd} by normalizing with the theory of canonical basis, and replacing $q$-binomials with quantum dilogarithms, the resulting \emph{integral transformation} is indeed the required intertwiners for the tensor product decomposition of the positive representations $\cP_{\vec[\l_1]}\ox \cP_{\vec[\l_2]}$. Here one has to take into account the summation range of the Clebsch-Gordan equation which becomes nontrivial in higher rank, governed by the Littlewood-Richardson rule (see e.g. \cite{HK}). Also one has to show that the integral transformation is well-defined and unitary under some measure. In particular it should intertwine the action of the \emph{positive Casimirs} calculated in \cite{Ip5}. Investigating small cases of the Littlewood-Richardson rule, we made the following conjecture in \cite{Ip6}:
\begin{Con} The positive representations for $\cU_{q\til{q}}(\g_\R)$ is closed under taking the tensor product, and it decomposes as
\Eq{\label{conj}
\cP_{\vec[\l_1]}\ox \cP_{\vec[\l_2]}\simeq \int_{\R_+^{l(w_0)}}^{\o+} \cP_{\vec[\c]} d\mu(\vec[\c]),
}
where $\vec[\c]=\sum_{\a\in \D_+} \c_\a\w_\a$ summing over all the positive roots $\D_+$, where $\c_\a\in \R_+$ and $\w_\a$ are the fundamental weights, with the abuse of notation $\w_\a:=\w_{\a_1}+\w_{\a_2}$ if $\a:=\a_1+\a_2$ is not simple. The Plancherel measure $d\mu(\vec[\c])$ is a continuous measure given by 
\Eq{
d\mu(\vec[\c])=\prod_{\a\in\D_+}\sinh(2\pi b\c_\a)\sinh(2\pi b\inv \c_\a)d\c_\a.} 
Note that both sides of \eqref{conj} is isomorphic to $L^2(\R^{2l(w_0)})$.
\end{Con}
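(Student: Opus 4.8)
The plan is to follow the four-step replacement philosophy of the Main Theorem, mirroring exactly what is carried out for $\cU_{q\til{q}}(\sl(2,\R))$ in Sections \ref{sec:findim}--\ref{sec:tensor}, and then to establish unitarity by factoring the resulting intertwiner through transformations already known to be unitary. First I would settle the preceding Conjecture by computing the finite-dimensional Clebsch--Gordan coefficients $C^{\vec[n]}_{\vec[k],\vec[m]}$ of $\cU_q(\g_c)$ as explicit linear combinations of $q$-binomials. As in Lemmas \ref{CG-K}--\ref{CG-E}, this reduces to solving the finite difference equations obtained from the intertwining conditions $\D(E_i)\,C = C\,E_i$ and $\D(K_i)\,C = C\,K_i$: the weight constraint from the $K_i$ fixes the support (the analogue of the Kronecker delta $\d_{m+n,k+d}$), and induction on the remaining indices using generalized Pascal identities produces the coefficients. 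Following Proposition \ref{real2finite}, I would then use the virtual highest weight substitution of \cite{Ip5} to pass to real parameters $\vec[N]\mapsto\vec[\l]$, so that the compact functional equations become, up to the sign handled by \eqref{minus}, the intertwining equations for $\cP_{\vec[\l_1]}\ox\cP_{\vec[\l_2]}$.

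Next, exactly as in Theorem \ref{main}, I would normalize the highest-weight coefficient to $1$ using the canonical basis of \cite{Kashi,Lu2}, replace each $q$-factorial by $S_b(Q+b\,\cdot)$ per Remark \ref{replaceqd}, and replace the finite summations by contour integrals whose ranges are dictated by the Littlewood--Richardson support of the decomposition. Because the functional equations \eqref{funceq} for $S_b$ match those of the $q$-factorials up to the factor $i(q-q\inv)$ that cancels in every $q$-binomial, the resulting integral kernel $C(\vec[x],\vec[y],\vec[\c],\vec[z])$ automatically intertwines the $E_i$ and $K_i$; the intertwining of $F_i$ then follows from the higher-rank analogue of Lemma \ref{intertwine-F}, i.e. from the functional equation \eqref{cp} satisfied by the normalization factors. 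Convergence of the contour integrals, hence well-definedness of $C$, would be guaranteed by the asymptotic properties \eqref{asymp} of $G_b$ together with Notation \ref{contour}.

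The decisive and hardest step is unitarity, and here I would not argue directly but reproduce the strategy of Theorem \ref{main2}: the kernel $C$ must agree, up to a scalar $const(\vec[\c],\vec[\l_1],\vec[\l_2])$, with the composition
\Eqn{
\Phi^{-1} = (\cS_{\vec[\l_1]}\ox\cS_{\vec[\l_2]})\inv\circ\cT\inv\circ(\cR\ox 1)\inv\circ(\cK\ox 1)\inv\circ(1\ox\cS_{\vec[\c]}),
}
where $\cT$ is the quantum mutation operator constructed for all $\cU_{q\til{q}}(\g_\R)$ in \cite{Ip4}, $\cR$ simplifies the coproduct of the \emph{positive Casimirs} of \cite{Ip5}, and $\cK$ is a higher-rank spectral decomposition diagonalizing them simultaneously. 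Since $\cT$ and the simplification maps $\cS_{\vec[\l]}$ are already unitary, unitarity of the whole transformation reduces to constructing $\cK$: one must exhibit a complete family of orthogonal eigendistributions for the commuting positive Casimirs and compute the associated Plancherel measure. I expect this measure to factor as $\prod_{\a\in\D_+}\sinh(2\pi b\c_\a)\sinh(2\pi b\inv\c_\a)\,d\c_\a$ because the diagonalization should proceed by iterated rank-one reductions, one along each positive root, each contributing a factor $|S_b(Q+2i\c_\a)|^2 = 4\sinh(2\pi b\c_\a)\sinh(2\pi b\inv\c_\a)$ exactly as in Theorem \ref{sl2trans}; matching $const(\vec[\c],\vec[\l_1],\vec[\l_2])$ against this product, as in the final computation of Theorem \ref{main2}, would then pin down $d\mu(\vec[\c])$.

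The principal obstacle is precisely the construction of this higher-rank Kashaev transformation $\cK$. Unlike the rank-one case, the commuting positive Casimirs are complicated noncommutative expressions, the Littlewood--Richardson multiplicities can exceed one, and yet the dimension count forces the continuous decomposition to be multiplicity-free; reconciling these discrete multiplicities with the smooth direct integral over $\R_+^{l(w_0)}$ is a genuine subtlety. Moreover, the iterated reduction along positive roots requires choosing a reduced word for $w_0$ compatible with the contour prescriptions so that each successive integral evaluates in closed form via Lemmas \ref{tau}--\ref{45}. I would therefore first test the entire scheme on $\cU_{q\til{q}}(\sl(3,\R))$, where $l(w_0)=3$, the Clebsch--Gordan coefficients are accessible, and the multiplicity-free pieces of the decomposition allow one to verify both the intertwining property and the predicted product measure before attempting the general case.
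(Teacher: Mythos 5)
The statement you are addressing is a \emph{Conjecture} which the paper itself does not prove: Section \ref{sec:higher} offers only a proposed strategy, and your proposal reproduces that strategy almost verbatim --- compact Clebsch--Gordan coefficients via functional equations, the virtual highest weight substitution of \cite{Ip5}, canonical-basis normalization, the $[n]!\to S_b(Q+bn)$ replacement of Remark \ref{replaceqd}, and unitarity via factoring through the quantum mutation operator $\cT$ of \cite{Ip4} and a Casimir diagonalization. The problem is that every step that would turn this program into a proof is deferred (``I would'', ``I expect'') rather than executed, so what you have written is a research plan, not a proof.

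Concretely, three gaps remain open in your text exactly where they remain open in the paper. First, your opening step presupposes the paper's preceding Conjecture, that the higher-rank coefficients $C^{\vec[n]}_{\vec[k],\vec[m]}$ are explicit linear combinations of $q$-binomials; the paper notes that no such general formula is known beyond $\cU_q(\sl_2)$ and the octet representations of $\cU_q(\sl_3)$, and ``induction using generalized Pascal identities'' is not a construction in rank $\geq 2$, where the recursions are coupled (cf.\ the two-term actions of $E_1$ and $F_2$ in the $\cU_{q\til{q}}(\sl(3,\R))$ example) and the decomposition has nontrivial Littlewood--Richardson multiplicities. Second, the decisive unitarity step hinges on a higher-rank Kashaev transformation $\cK$, i.e.\ a complete orthogonal family of joint eigendistributions of the commuting positive Casimirs together with its Plancherel measure; you do not construct it, you only posit that an iterated rank-one reduction along positive roots would yield the factor $\prod_{\a\in\D_+}\sinh(2\pi b\c_\a)\sinh(2\pi b\inv\c_\a)$. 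That expected form is precisely the content of the conjecture being proved --- the measure cannot be ``pinned down by matching constants'' against a transformation whose existence and unitarity are the things in question, whereas in Theorem \ref{main2} the comparison works only because $\Phi$ was independently proven unitary in Theorem \ref{sl2trans}. Third, the multiplicity/dimension-count tension you name (discrete multiplicities exceeding one versus a multiplicity-free direct integral over $\R_+^{l(w_0)}$, with $l(w_0)$ strictly larger than $\mathrm{rank}(\g)$) is a genuine structural obstacle, and flagging it does not dispose of it. Restricting first to $\cU_{q\til{q}}(\sl(3,\R))$ is indeed the sensible test case --- the paper says the same --- but until the Clebsch--Gordan formulas, the spectral decomposition of the positive Casimirs, and the unitarity of the resulting integral kernel are actually established there, the conjecture remains exactly that.
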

We believe that even establishing the conjecture for lower rank case of $\cU_{q\til{q}}(\sl(3,\R))$ is enough to provide major breakthroughs in the theory of positive representations of split real quantum groups and its many applications as a completely new class of braided tensor categories.


\end{document}